\documentclass{amsart}

\usepackage{amsmath,amssymb,amsthm}
\usepackage[shortlabels]{enumitem}
\usepackage{xcolor}
\usepackage{hyperref}

\hypersetup{
  colorlinks=true,
  linkcolor=blue!55!black,
  citecolor=blue!55!black,
  urlcolor=blue!65!black
}

\usepackage[initials, backrefs]{amsrefs}

\usepackage[capitalise,noabbrev]{cleveref}

\theoremstyle{plain}
\newtheorem{theorem}{Theorem}[section]
\newtheorem{prop}[theorem]{Proposition}
\newtheorem{lem}[theorem]{Lemma}
\newtheorem{cor}[theorem]{Corollary}

\theoremstyle{definition}
\newtheorem{definition}[theorem]{Definition}
\newtheorem{exa}[theorem]{Example}
\newtheorem{algorithm}[theorem]{Algorithm}

\newtheorem{rem}[theorem]{Remark}

\newcommand{\Z}{\mathbb Z}

\newcommand{\Q}{\mathbb Q}
\newcommand{\C}{\mathbb C}

\newcommand{\PP}{\mathbb P}

\newcommand\cC{\mathcal C}
\newcommand\cM{\mathcal M}
\newcommand\cS{\mathcal S}

\DeclareMathOperator{\Aut}{Aut}
\DeclareMathOperator{\disc}{disc}
\DeclareMathOperator{\Div}{div}
\DeclareMathOperator{\Gal}{Gal}
\DeclareMathOperator{\gon}{gon}
\DeclareMathOperator{\Lev}{Lev}
\DeclareMathOperator{\ord}{ord}

\crefname{theorem}{Thm.}{theorems}
\crefname{prop}{Prop.}{propositions}
\crefname{lem}{Lem.}{lemmas}
\crefname{cor}{Cor.}{corollaries}
\crefname{definition}{Def.}{definitions}
\crefname{exa}{Exa.}{examples}
\crefname{rem}{Rem.}{remarks}
\crefname{algorithm}{Alg.}{algorithms}
\crefname{equation}{Eq.}{}

\title[Deciding superellipticity]{Deciding superellipticity and computing the Weierstrass normal form}

\author{T. Shaska}
\address{Department of Computer Science and Engineering, Oakland University, Rochester, MI 48309, USA}
\email{shaska@oakland.edu}

\subjclass[2020]{Primary 14H37, 14Q05; Secondary 14H30, 14H55, 11G30}

\keywords{superelliptic curves, cyclic covers, Weierstrass points, normal form, moduli of curves}

\begin{document}

\begin{abstract}
Let \( \cS_{g,n} \subset \cM_g \) be the locus of curves of genus \( g \geq 2 \) admitting a model \( y^n = h(x) \) with \( h \) separable;
such  curves $C$  have a cyclic group \( C_n \leq \Aut(C) \) of order \( n \) with \( C/C_n \cong \PP^1 \).
We give an algorithm which, given an absolutely irreducible plane model \( F(x,y) = 0 \) of a curve \( C \) over a field \( k_0 \) of characteristic zero, decides for which \( n \) the curve lies in \( \cS_{g,n} \) and returns a model \( y^n = h(x) \) together with the birational transformation to it. 
\end{abstract}

\maketitle

\setcounter{tocdepth}{1}

\section{Introduction}\label{sec:intro}

Let $k$ be an algebraically closed field of characteristic zero and let $\cM_g$ be the moduli space of curves of genus $g\ge 2$ over $k$. A curve $C$ admits a cyclic cover of degree $n$ when some $\tau\in\Aut(C)$ of order $n$ has quotient $C/\langle\tau\rangle$ of genus zero; every such curve has an affine model $y^n=h(x)$ with $h\in k[x]$. When $h$ can be taken separable, $C$ is called superelliptic of level $n$; the term is used with different meanings in the literature, and our convention is fixed in \cref{sec:cyclic}. We write $\cS_{g,n}\subset\cM_g$ for the locus of superelliptic curves of level $n$.

The loci $\cS_{g,n}$ are the part of $\cM_g$ where the theory of automorphism groups is explicit. For a curve in $\cS_{g,n}$ whose cyclic group of level $n$ is normal in $\Aut(C)$, the tables of \cite{beshaj2011, sanjeewa2008, malmendier2019} give the full automorphism group and a parametric equation in terms of the level and the signature of the cover; no comparable description exists for curves outside these loci.
Moreover, of the $41$ loci of curves with prescribed full automorphism group in genus four, $28$ consist of superelliptic curves, and \cite{malmendier2019} poses the corresponding ratio in higher genus as an open problem. The basic question in applying the theory is therefore the following. Given a curve $C$ by an arbitrary absolutely irreducible plane model $F(x,y)=0$, decide for which $n$ it lies in $\cS_{g,n}$, and compute a birational transformation to a model $y^n=h(x)$. For the hyperelliptic level $n=2$ this problem was solved by van Hoeij \cite{vanhoeij2002}; for $n\ge 3$ no such algorithm is known.

The hyperelliptic case shows why the general case needs a new idea. Van Hoeij decides whether $C$ is hyperelliptic from the products of its holomorphic differentials, computes a generator $x_0$ of the unique subfield of index two, and reads the normal form off the minimal polynomial of a second function over $k(x_0)$. The degree-two map is a ratio of holomorphic differentials, so it exists on every hyperelliptic curve and no choice of a point is needed. For $n\ge 3$ the degree-$n$ map is not read off the canonical map: a cyclic function of degree $n$ has a single pole of order $n$ at a totally ramified point $P$, so $h^0(nP)\ge 2$, and $n$ must be a non-gap at $P$, which holds only at special points. The general algorithm of Hess \cite{hess2004} for isomorphisms of function fields also proceeds through Weierstrass points, but it computes automorphisms over the given constant field, not over its algebraic closure, and it returns neither the cyclic subgroups nor a normal form. Its computation of Weierstrass points \cite{hess2002wp} and Riemann--Roch spaces \cite{hess2002} is used here as a subroutine.

A second approach would be through subfields. Whether $C$ admits a cyclic cover of degree $n$ is the question of whether $k(C)$ has a subfield of index $n$ and genus zero over which $k(C)$ is Galois, and algorithms for the subfields of a function field are available \cite{vanhoeij2013}; this is how the hyperelliptic case is posed in \cite{vanhoeij2002}. It does not settle the present question. The subfields are computed over $k_0$, while membership in $\cS_{g,n}$ is a property of $C_{\bar k_0}$; a genus zero subfield is a conic, which is $\PP^1_{k_0}$ only when it has a rational point, as in \cref{rem:descent}; and a cyclic subfield of index $n$ does not by itself give a separable model, which by \cref{thm:sep} need not exist at all, as in \cref{ex:nonsep}. The number of subfields is moreover not bounded by a polynomial in $n$, which is the motivation for the generating subfields of \cite{vanhoeij2013}, whereas the search below runs over the at most $g^3-g$ Weierstrass points, filtered by weight.

The identification of the special points rests on the following statement, proved in \cref{sec:wp}. Let $C: y^n=h(x)$ with $h\in k[x]$ separable of degree $m$, let $g\ge 2$, and let $P$ be a point of $C$ above a root of $h$. If $m\ge 3$, then $n\le g+2$ and $P$ is a Weierstrass point of $C$. If $m=2$, then $C$ is hyperelliptic, $n\in\{2g+1,2g+2\}$, and the vanishing sequence at $P$ is the generic one. The degree two map of $C$ is then $y$ and not $x$, and its branch points do not lie above the roots of $h$.

This makes the search finite. In characteristic zero a curve of genus $g$ has at most $g^3-g$ Weierstrass points, so for each level $n\le g+2$ the branch divisor of a separable model is one of finitely many sums of Weierstrass points, selected by their vanishing sequences, which are known at branch points of cyclic covers \cite{towse1996,   shor}. The search is over these divisors. Each candidate is decided exactly by a Kummer criterion (\cref{prop:cert}), and each output is verified by polynomial division. This is \cref{alg:main}, and \cref{thm:complete} states that it decides membership in $\cS_{g,n}$: run over the field $k_n$ generated by the Weierstrass points of admissible weight and by $\zeta_n$, it returns exactly the levels $n$ with $[C]\in\cS_{g,n}$, with a model over $k_n$. The question over the field of definition $k_0$ is arithmetic and is treated in \cref{sec:arith}: a level realised by a cover defined over $k_0$ is found over $k_0$, with at most one extension of degree at most $\deg h$ (\cref{cor:complete-k0}), and the candidates are $k_0$-rational sums of closed points of the Weierstrass divisor. A level realised only by cyclic covers that admit no separable model lies outside that statement. \Cref{rem:incomplete} treats such levels: the reduction to Weierstrass points still recovers those realised by a cover with two totally ramified points and $n\le g$, and it gives branch data for which the reduction fails.

The normal form obtained has $h\in k[u]$, but $h$ need not be separable, and no construction can make it so: by \cref{thm:sep} a cyclic morphism with branch data $(l_1,\dots,l_r)$ admits a model $v^n=h(u)$ with $h$ separable if and only if some unit $c$ modulo $n$ satisfies $c\,l_j\equiv 1\pmod n$ for all $j$ with at most one exception.

The paper is organised as follows. \Cref{sec:cyclic} recalls cyclic covers, branch data and the level spectrum, separates the level from the gonality, records the vanishing sequence at a totally ramified point, and describes the loci $\cS_{g,n}$ as Hurwitz loci in $\cM_g$. \Cref{sec:wp} proves the statement on Weierstrass points and the bounds on $n$. \Cref{sec:wnf} constructs the normal form and characterises the separable case. \Cref{sec:alg} states the algorithm and proves that it decides membership in $\cS_{g,n}$. \Cref{sec:arith} descends the algorithm to the field of definition and reports timings for the implementation.

\section{Cyclic covers and superelliptic curves}
\label{sec:cyclic}

This section fixes the conventions, defines the level and the level spectrum, records the vanishing sequence of the holomorphic differentials at a totally ramified point, and describes the loci $\cS_{g,n}$ as Hurwitz loci in $\cM_g$.

Throughout, $k$ is an algebraically closed field of characteristic zero, $C$ is a smooth projective curve over $k$ of genus $g\ge 2$, and $n\ge 2$ is an integer; $k$ contains a primitive $n$-th root of unity $\zeta_n$. For a divisor $D$ on $C$ we write $L(D)$ for its Riemann--Roch space and $h^0(D)=\dim L(D)$.

The vanishing sequence of $H^0(C,\Omega^1)$ at a point $P$ is the increasing sequence of the $g$ orders $\ord_P(\omega)$ taken by non-zero holomorphic differentials. It is the same for all but finitely many points; that common sequence is the generic sequence of $C$, and $P$ is a \textbf{Weierstrass point} when its sequence differs from it. 
In characteristic zero the generic sequence is $(0,1,\dots,g-1)$ and there are at most $g^3-g$ Weierstrass points.

\subsection{Branch data}\label{sec:branch}

Let $\tau\in\Aut(C)$ have order $n\ge 2$ with $C/\langle\tau\rangle$ of genus zero. Then $C$ has an affine model
\begin{equation}\label{eq:model}
y^n=\prod_{j=1}^{s}(x-p_j)^{l_j},
\end{equation}
with $p_1,\dots,p_s\in k$ pairwise distinct, $l_j\in\{1,\dots,n-1\}$ and $\gcd(n,l_1,\dots,l_s)=1$. In this model $\tau(x,y)=(x,\zeta_n y)$ and the quotient map is $\pi(x,y)=x$. Put $m=l_1+\dots+l_s$. The point $x=\infty$ is a branch point of $\pi$ if and only if $n\nmid m$, and then its local monodromy is $l_\infty\equiv -m\pmod n$. The tuple $(l_1,\dots,l_s)$, together with $l_\infty$ when $n\nmid m$, is the branch data of the pair $(C,\langle\tau\rangle)$.

Write $d_j=\gcd(n,l_j)$ and $e_j=n/d_j$. The fibre of $\pi$ over $p_j$ consists of $d_j$ points, each of ramification index $e_j$. The point $p_j$ is totally ramified when $d_j=1$. Similarly $d_\infty=\gcd(n,m)$ and $e_\infty=n/d_\infty$. The Riemann--Hurwitz formula reads
\begin{equation}\label{eq:RH}
2g-2=-2n+\sum_{j}\bigl(n-\gcd(n,l_j)\bigr),
\end{equation}
the sum being over all branch points including $\infty$. When every $l_j=1$ and $h(x)=\prod_j(x-p_j)$ has degree $m$, this reads
\begin{equation}\label{eq:genus}
2g=(n-1)(m-1)-\gcd(n,m)+1.
\end{equation}
For our purposes, we will use the definition below. 

\begin{definition}\label{def:superelliptic}
A curve $C$ of genus $g\ge 2$ is \textbf{superelliptic of level $n$} if it admits an affine model $y^n=h(x)$ with $h\in k[x]$ separable, that is a model \cref{eq:model} in which every $l_j=1$.
\end{definition}

The term is used with different meanings in the literature. In \cite{beshaj2011, malmendier2019} a curve is superelliptic when a cyclic subgroup $H\le\Aut(C)$ with $C/H$ of genus zero is normal in $\Aut(C)$, with no condition on the model; in \cite{hidalgo, hidalgo2024} the generator is required to be central in $\Aut(C)$, and central in its normalizer for the generalized case. In \cite{Obus2021} the model $y^n=h(x)$ is required to be separable with $n\mid\deg h$ or $\gcd(n,\deg h)=1$, so that every branch point is totally ramified, and $\langle\tau\rangle$ is required to be normal in $\Aut(C)$; dropping the normality gives the pre-superelliptic curves of \cite{Obus2021}.

These classes differ from \cref{def:superelliptic} in both directions. A separable model with $1<\gcd(n,\deg h)<n$ satisfies \cref{def:superelliptic} and fails the ramification condition of \cite{Obus2021} in every separable presentation, since a M\"obius transformation moving a totally ramified branch point to infinity destroys separability. Conversely, a cyclic cover satisfying the group conditions need not admit a separable model at all: \cref{thm:sep} determines when it does, and \cref{ex:nonsep} gives a curve with a cyclic cover of degree four and no separable model of level four. \Cref{def:superelliptic} imposes no condition on $\Aut(C)$: the algorithm decides the existence of the separable model, and the group conditions are properties of $\Aut(C)$ that can be tested afterwards. The standing model \cref{eq:model} does not assume separability; \cref{thm:sep} determines when it can be arranged.

\begin{lem}\label{lem:equiv}
Two branch data define isomorphic pairs $(C,\langle\tau\rangle)$ if and only if they agree after a permutation of the indices, a M\"obius transformation of the points $p_j$, and multiplication of all $l_j$ by one unit $c\in(\Z/n\Z)^\times$.
\end{lem}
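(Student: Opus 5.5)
The plan is to translate the statement into Kummer theory for the function field extension $k(C)/k(x)$. A pair $(C,\langle\tau\rangle)$ with $C/\langle\tau\rangle\cong\PP^1$ is the same thing as a cyclic degree-$n$ extension $K/k(x)$ together with the identification of its Galois group with $\langle\tau\rangle$. Two pairs are isomorphic exactly when there is an isomorphism $\phi\colon C\to C'$ with $\phi\langle\tau\rangle\phi^{-1}=\langle\tau'\rangle$. Such a $\phi$ descends to an isomorphism of the quotients $\PP^1\to\PP^1$, that is, a M\"obius transformation $\mu$. So the proof splits into the effect of the choice of coordinate on $\PP^1$ and the effect of the choice of generator of the group.

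For the ``if'' direction I will check each operation separately. A permutation of indices changes nothing. For a M\"obius transformation $x=\mu(x')$, substituting into $\prod(x-p_j)^{l_j}$ gives $\prod(x'-\mu^{-1}(p_j))^{l_j}$ times a power of the denominator $(\gamma x'+\delta)$ and a constant. Multiplying $y$ by a suitable power of $(\gamma x'+\delta)$, and taking an $n$-th root of the constant (possible since $k$ is algebraically closed), restores the form \cref{eq:model}. The new branch point $\mu^{-1}(\infty)$ then receives exponent $\equiv -m$, consistent with the rule $l_\infty\equiv -m$. Replacing every $l_j$ by $cl_j$ with $c$ a unit means replacing $f=\prod(x-p_j)^{l_j}$ by $f^c$, reduced modulo $n$-th powers. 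Since $y^c\cdot\prod(x-p_j)^{-n\lfloor cl_j/n\rfloor}$ generates the same field $k(x,y)$ (as $\gcd(c,n)=1$), the curve and the group $\langle\tau\rangle$ are unchanged, and only the generator is replaced by $\tau^{c'}$ for $c'$ the inverse of $c$.

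For the ``only if'' direction I will start from an isomorphism $\phi$ of pairs and let $\mu$ be the induced map on quotients. After applying $\mu$ as above, both curves are Kummer extensions $k(x)(f^{1/n})$ and $k(x)(f'^{1/n})$ of the same field $k(x)$ with the same Galois group, and $\phi$ gives a $k(x)$-isomorphism between them. By Kummer theory ($\zeta_n\in k$), two cyclic degree-$n$ extensions of $k(x)$ coincide inside an algebraic closure if and only if $\langle f\rangle=\langle f'\rangle$ in $k(x)^\times/k(x)^{\times n}$. Hence $f'=f^c g^n$ with $c$ a unit modulo $n$. Comparing valuations at each $p_j$ gives $l'_j\equiv cl_j\pmod n$. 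The sets of points with $l_j\not\equiv 0$ must then agree, which forces the matching of the $p_j$ up to permutation. Constants are absorbed into $g^n$.

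The main obstacle is the bookkeeping at $\infty$ in the M\"obius step. I will handle it by working throughout with the full divisor $\sum l_j p_j + l_\infty\infty$ of degree $\equiv 0\pmod n$ on $\PP^1$, on which $\mu$ acts simply by moving points. With this convention the claim becomes: pairs up to isomorphism correspond to orbits of such divisor classes modulo $n$ under $\mathrm{PGL}_2\times(\Z/n\Z)^\times$. The normalisation $\gcd(n,l_1,\dots,l_s)=1$ is exactly the condition that $f$ has order $n$ in the Kummer group, so that $C$ is irreducible.
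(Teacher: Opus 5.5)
Your proof is correct. It uses the same two ingredients as the paper: the isomorphism $\phi$ descends to a M\"obius transformation of the quotients, and Kummer theory applies over $k(x)$ because $\zeta_n\in k$. What differs is how these are split between the two directions.

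For the forward direction, the paper writes $\varphi\tau\varphi^{-1}=\tau'^{c}$. It then observes that the local monodromy at $\bar\varphi(p_j)$ with respect to $\tau'$ is $c$ times the local monodromy at $p_j$ with respect to $\tau$. This identifies the unit $c$ directly as the change of generator. However, it relies on the exponents $l_j$ of the model being local monodromies, which the paper only verifies later, in \cref{lem:local} and the remark following it. You instead compare Kummer classes, obtaining $f'=f^{c}g^{n}$, and read off $l'_j\equiv c\,l_j \pmod n$ from valuations. This needs nothing beyond the model itself.

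For the converse, the paper invokes Kummer theory: equal classes in $k(x)^\times/(k(x)^\times)^n$ give equal fields. You give explicit substitutions instead. For $x=\mu(x')$ you absorb a power of the denominator into $y$. For the unit you use $y\mapsto y^{c}\prod_j(x-p_j)^{-\lfloor c\,l_j/n\rfloor}$. This exhibits the isomorphism and the new generator $\tau^{c'}$ concretely.

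Your convention of working with the full divisor $\sum_j l_jp_j+l_\infty\infty$ modulo $n$ is what makes the point at infinity harmless in both directions. The paper leaves that bookkeeping implicit.
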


\begin{proof}
An isomorphism $\varphi: C\to C'$ with $\varphi\langle\tau\rangle\varphi^{-1}=\langle\tau'\rangle$ satisfies $\varphi\tau\varphi^{-1}=\tau'^{\,c}$ for a unit $c$, and descends to an isomorphism $\bar\varphi$ of the quotients $\PP^1_x\to\PP^1_{x'}$, a M\"obius transformation carrying branch points to branch points. The local monodromy at $\bar\varphi(p_j)$ with respect to $\tau'$ is that at $p_j$ with respect to $\tau$ multiplied by $c$. Conversely, if two data agree after these operations, then after the M\"obius transformation and the change of generator the right hand sides of the two models \cref{eq:model} have the same order modulo $n$ at every place of $k(x)$, so they define the same class in $k(x)^\times/(k(x)^\times)^n$, and by Kummer theory the two function fields coincide as extensions of $k(x)$ with $\tau$ corresponding to $\tau'$. See also \cite[\S 4]{hidalgo2024}.
\end{proof}

The unit $c$ records the choice of the generator among $\tau,\tau^2,\dots$, and the M\"obius transformation records which branch point is placed at infinity.


A curve may admit cyclic covers of more than one degree.

\begin{definition}\label{def:lev}
The \textbf{level spectrum of $C$} is the set $\Lev  (C)$ of integers $n>1$ such that $C$ admits a cyclic cover $C\to\PP^1$ of degree $n$.
\end{definition}

By Galois theory $n\in\Lev(C)$ if and only if $\Aut(C)$ contains a cyclic subgroup of order $n$ whose quotient has genus zero. Since $\Aut(C)$ is finite, $\Lev(C)$ is a finite set. It is an invariant of $C$ and not of a chosen model.

\begin{exa}\label{ex:picard}
Let $C: y^3=x^4-1$. Then $g=3$ by \cref{eq:genus}. Rearranging gives $x^4=y^3+1$, so $x$ realises $C$ as a cyclic cover of degree four with model $x^4=y^3+1$, and $\{3,4\}\subseteq\Lev(C)$. Both models are of the form $v^n=h(u)$ with $h$ separable, with $(n,m)=(3,4)$ and $(4,3)$.
\end{exa}

 The gonality $\gon(C)$ is the least degree of a non-constant morphism $C\to\PP^1$, equivalently the least $d$ for which $C$ carries a pencil $g^1_d$. It does not bound the levels. The gonality of a curve of genus $g$ is at most $\lfloor(g+3)/2\rfloor$, while by \cref{thm:bound} the level of a separable model may be as large as $g+2$, which is larger for every $g\ge 2$.
 
\subsection{Differentials at a totally ramified point}\label{sec:diff}

The gap sequence at a totally ramified place of a Kummer extension is determined in \cite[Theorem 3.5, Corollary 3.6]{abdon2019}, and for separable $h$ in \cite{towse1996} and \cite[Theorem 6.1]{abdon2019}. We record the vanishing sequence of the holomorphic differentials, which is the same information, in the form used by the algorithm.

Let $C$ be given by \cref{eq:model}. Fix an index $i$ with $d_i=1$ and let $P$ be the unique point of $C$ above $p_i$. For $a\ge 0$ and $0\le b<n$ put
\begin{equation}\label{eq:omega}
\omega_{a,b}=(x-p_i)^a\prod_{j=1}^{s}(x-p_j)^{\lfloor b\,l_j/n\rfloor}\,\frac{dx}{y^b}.
\end{equation}

\begin{prop}\label{prop:basis}
The differential $\omega_{a,b}$ is holomorphic on $C$ if and only if $(a,b)\in A$, where
\begin{equation}\label{eq:Aset}
A=\Bigl\{(a,b): a\ge 0,\ 0\le b<n,\ a+\sum_{j}\Bigl\lfloor\frac{b\,l_j}{n}\Bigr\rfloor\le\frac{bm}{n}-1-\frac{1}{e_\infty}\Bigr\}.
\end{equation}
The set $\{\omega_{a,b}\}_{(a,b)\in A}$ is a basis of $H^0(C,\Omega^1)$, and
\begin{equation}\label{eq:orders}
\ord_P(\omega_{a,b})=na+n-1-\bigl(b\,l_i \bmod n\bigr),
\end{equation}
where $b\,l_i\bmod n$ denotes the least non-negative residue.
\end{prop}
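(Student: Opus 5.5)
We compute the divisor of $\omega_{a,b}$ place by place, using the local structure of the Kummer cover $\pi\colon C\to\PP^1_x$ from \cref{sec:branch}.

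\emph{Local orders.} Over $p_j$ there are $d_j$ points $Q$, each with ramification index $e_j$. At such a point
\[
\ord_Q(x-p_j)=e_j,\qquad \ord_Q(y)=\frac{e_j l_j}{n}=\frac{l_j}{d_j},\qquad \ord_Q(dx)=e_j-1.
\]
Over $\infty$ there are $d_\infty$ points, with $\ord(x)=-e_\infty$, $\ord(y)=-e_\infty m/n$ and $\ord(dx)=-e_\infty-1$. Over a point of $\PP^1$ that is not a branch point, $x-c$ is a local parameter and $y$ is a unit.

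\emph{Finite points.} At $Q$ over $p_j$ we get
\[
\ord_Q(\omega_{a,b})=e_j\Bigl(a\delta_{ij}+\Bigl\lfloor \tfrac{bl_j}{n}\Bigr\rfloor-\tfrac{bl_j}{n}\Bigr)+e_j-1
=e_ja\delta_{ij}+e_j-1-\frac{(bl_j\bmod n)}{d_j}.
\]
Since $d_j\mid bl_j\bmod n$, the residue is at most $n-d_j$, so the quotient is at most $e_j-1$ and the order is $\ge 0$. Hence $\omega_{a,b}$ is automatically holomorphic at all finite points. At $P$ (where $d_i=1$ and $e_i=n$) this gives exactly \cref{eq:orders}. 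At unramified finite points everything is regular.

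\emph{Infinity.} The order there is
\[
e_\infty\Bigl(-a-\sum_j\Bigl\lfloor\tfrac{bl_j}{n}\Bigr\rfloor+\tfrac{bm}{n}\Bigr)-e_\infty-1.
\]
Requiring this to be $\ge 0$ and dividing by $e_\infty$ gives precisely the inequality defining $A$ in \cref{eq:Aset}. So $\omega_{a,b}$ is holomorphic if and only if $(a,b)\in A$.

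\emph{Linear independence.} The $\omega_{a,b}$ are independent for two reasons. Differentials with distinct $b$ lie in distinct eigenspaces of $\tau^*$, since $\tau^*\omega_{a,b}=\zeta_n^{-b}\omega_{a,b}$. For fixed $b$ they are $k(x)$-multiples of a single differential by distinct powers of $(x-p_i)$.

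\emph{Spanning.} This is the main obstacle. I would first try a counting argument: show $|A|=g$ by evaluating $\sum_b \#\{a\}$ and comparing with \cref{eq:RH}. This works directly only in special cases, so the more robust route is a direct spanning argument via the eigenspace decomposition $H^0(\Omega^1)=\bigoplus_b V_b$.

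Any $\omega\in V_b$ can be written $\omega=f(x)\,dx/y^b$ with $f\in k(x)$. Holomorphy at the finite points forces
\[
f\cdot\prod_j(x-p_j)^{-\lfloor bl_j/n\rfloor}
\]
to be a polynomial; this uses the finite-point computation above, now run in reverse to bound the pole of $f$ at each $p_j$. At unramified points $f$ has no poles. Writing this polynomial in powers of $(x-p_i)$ expresses $\omega$ as a combination of the $\omega_{a,b}$, and the condition at infinity bounds $a$ exactly as in $A$. Each $V_b$ is therefore spanned by $\{\omega_{a,b}\}_{(a,b)\in A}$, so the set is a basis.
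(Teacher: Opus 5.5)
Your proposal is correct and follows the paper's proof. It uses the same place-by-place order computation over the $p_j$ and over $\infty$, and the same spanning argument through the $\tau^*$-eigencomponents $f(x)\,dx/y^b$, where finite holomorphy forces $f=\prod_j(x-p_j)^{\lfloor bl_j/n\rfloor}\psi$ with $\psi\in k[x]$ and the condition at infinity bounds $\deg\psi$. The one difference is linear independence: the paper deduces it from the pairwise distinct orders at $P$ (using $\gcd(n,l_i)=1$), while you use the eigenspace splitting together with distinct powers of $x-p_i$, which works equally well and does not need the totally ramified point.
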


\begin{proof}
At a point over $p_j$ we have $\ord(x-p_j)=e_j$ and $\ord(y)=l_j/d_j$; the ramification is tame, so $\ord(dx)=e_j-1$ and
\[
\begin{split}
\ord(\omega_{a,b})&=e_j\Bigl\lfloor\frac{b\,l_j}{n}\Bigr\rfloor+e_j-1-\frac{b\,l_j}{d_j} =e_j-1-\frac{b\,l_j\bmod n}{d_j},
\end{split}
\]
since $b\,l_j/d_j=e_j\cdot b\,l_j/n$ and the term $(x-p_i)^a$ contributes only when $j=i$. As $d_j$ divides both $n$ and $l_j$, the residue $b\,l_j\bmod n$ is a multiple of $d_j$ not exceeding $n-d_j$, so the order is a non-negative integer. Thus $\omega_{a,b}$ is holomorphic at every finite branch point, and at $P$ the factor $(x-p_i)^a$ adds $na$, which gives \cref{eq:orders}. At a point over $x=\infty$ we have $\ord(x)=-e_\infty$ and $\ord(y)=-m/d_\infty$, so with $A_{a,b}=a+\sum_j\lfloor b\,l_j/n\rfloor$ the order of $\omega_{a,b}$ is $-e_\infty A_{a,b}-e_\infty-1+bm/d_\infty$. Non-negativity of this order is the inequality in \cref{eq:Aset}. Away from the branch points $\omega_{a,b}$ is regular. Since $\gcd(n,l_i)=1$, the map $b\mapsto b\,l_i\bmod n$ is a bijection of $\{0,\dots,n-1\}$, so the orders \cref{eq:orders} are pairwise distinct and the $\omega_{a,b}$ with $(a,b)\in A$ are linearly independent. Conversely, $H^0(C,\Omega^1)$ is $\tau$-stable, so every holomorphic differential is the sum of its eigencomponents $\varphi(x)\,dx/y^b$, each holomorphic. Holomorphy of such a component at the finite points forces $\varphi=\prod_j(x-p_j)^{\lfloor b\,l_j/n\rfloor}\psi$ with $\psi\in k[x]$, by the orders computed above, and holomorphy over $x=\infty$ bounds $\deg\psi$ by the inequality in \cref{eq:Aset}. Hence the $\omega_{a,b}$ with $(a,b)\in A$ span $H^0(C,\Omega^1)$, and $|A|=g$.
\end{proof}

\begin{cor}\label{prop:filter}
The vanishing sequence of $H^0(C,\Omega^1)$ at $P$ is
\begin{equation}\label{eq:sn}
s=\bigl(\,na+n-1-(b\,l_i\bmod n) : (a,b)\in A\,\bigr),
\end{equation}
arranged in increasing order.
\end{cor}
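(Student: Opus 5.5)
The corollary follows from \cref{prop:basis} once we know that the orders of vanishing at $P$ attained by \emph{all} non-zero holomorphic differentials are exactly the orders of the basis elements $\omega_{a,b}$ with $(a,b)\in A$. The plan is to show that the basis is adapted to $P$, i.e.\ its members have pairwise distinct orders at $P$, and then use the standard fact that the order at $P$ of a linear combination of differentials with distinct orders is the minimum of the orders of the terms with non-zero coefficient.

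\textbf{Distinct orders.} By \cref{eq:orders}, $\ord_P(\omega_{a,b})=na+n-1-r_b$ with $r_b=b\,l_i\bmod n\in\{0,\dots,n-1\}$. Thus $n-1-r_b\in\{0,\dots,n-1\}$ is the residue of the order modulo $n$, and $a$ is its quotient. Since $\gcd(n,l_i)=1$ (that is, $d_i=1$), the map $b\mapsto r_b$ is injective on $\{0,\dots,n-1\}$, as already noted in the proof of \cref{prop:basis}. Hence the pair $(a,b)$ is recovered from the order, and the $g$ orders are pairwise distinct.

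\textbf{Identifying the vanishing sequence.} Take any non-zero $\omega\in H^0(C,\Omega^1)$ and write $\omega=\sum_{(a,b)\in A}c_{a,b}\,\omega_{a,b}$, which is possible since the $\omega_{a,b}$ form a basis. Choose a local parameter $t$ at $P$. The term of lowest order among those with $c_{a,b}\ne 0$ contributes a leading monomial in $t$ that no other term can cancel, because the orders are distinct. So $\ord_P(\omega)$ is one of the listed values. Conversely, each value is attained by the corresponding $\omega_{a,b}$.

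The set of orders attained by non-zero holomorphic differentials is therefore exactly $\{na+n-1-r_b:(a,b)\in A\}$, a set of size $|A|=g$. Arranged in increasing order, this is the vanishing sequence \cref{eq:sn}. There is no real obstacle: the only point needing care is the injectivity of $b\mapsto r_b$, which is where the hypothesis $d_i=1$ (total ramification at $P$) enters.
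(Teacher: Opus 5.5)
Your proposal is correct and is the paper's argument. The paper's one-line proof rests on the same fact you spell out: $g$ holomorphic differentials with pairwise distinct orders at $P$, which \cref{prop:basis} supplies via the injectivity of $b\mapsto b\,l_i\bmod n$, realise the vanishing sequence, because the order of a linear combination is the least order among its terms with non-zero coefficient.
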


\begin{proof}
A set of $g$ holomorphic differentials with pairwise distinct orders at $P$ realises the vanishing sequence at $P$.
\end{proof}

When every $l_j=1$ and $m=\deg h$, substituting $b'=n-1-b$ turns \cref{eq:Aset} into $\{(a,b'): a\ge 0,\ 0\le b'<n,\ an+b'm\le 2g-2\}$ and \cref{eq:orders} into $\ord_P=an+b'$. This is the basis of \cite[Proposition 13]{shor}. We write $s_{n,m}$ for the sequence \cref{eq:sn} in that case, so that
\begin{equation}\label{eq:snm}
s_{n,m}=\bigl(\,an+b : a\ge 0,\ 0\le b<n,\ an+bm\le 2g-2\,\bigr).
\end{equation}

\begin{exa}\label{ex:seq}
For $(n,m)=(4,5)$ formula \cref{eq:genus} gives $g=6$ and $s_{4,5}=(0,1,2,4,5,8)$. For $(n,m)=(5,6)$ one gets $g=10$ and $s_{5,6}=(0,1,2,3,5,6,7,10,11,15)$. Neither is of the form $(0,1,\dots,q-1,q,q+n,q+2n,\dots)$. That shape occurs only for $n=2$.
\end{exa}

\begin{exa}\label{ex:top}
Let $C: y^n=x^3-x$ with $n\ge 4$, so $m=3$ and $\gcd(n,3)\in\{1,3\}$. If $3\nmid n$, then $g=n-1$ and the pairs in \cref{eq:snm} are $(0,b)$ with $3b\le 2n-4$ and $(1,b)$ with $3b\le n-4$, so
\[
s_{n,3}=\bigl(0,1,\dots,\lfloor (2n-4)/3\rfloor,\ n,n+1,\dots,n+\lfloor (n-4)/3\rfloor\bigr).
\]
If $3\mid n$, then $g=n-2$ and
\[
s_{n,3}=\bigl(0,1,\dots,\tfrac{2n}{3}-2,\ n,n+1,\dots,\tfrac{4n}{3}-2\bigr).
\]
For $n=5$ this is $(0,1,2,5)$, for $n=6$ it is $(0,1,2,6)$, and for $n=9$ it is $(0,1,2,3,4,9,10)$. In every case the sequence contains $1$ and contains $n>g-1$, so it is neither the generic sequence $(0,1,\dots,g-1)$ nor the sequence $(0,2,\dots,2g-2)$ of a hyperelliptic Weierstrass point.
\end{exa}

\subsection{Hurwitz loci}\label{sec:hurwitz}

Let $\cM_g$ be the moduli space of smooth projective curves of genus $g\ge 2$ over $k$. We describe the loci of superelliptic curves in $\cM_g$ in the language of \cite{mssv2001, malmendier2019}.

Let $G$ be a finite group and let $\mathbf C=(C_1,\dots,C_r)$ be a tuple of non-trivial conjugacy classes of $G$. A curve $X$ of genus $g$ with a faithful action of $G$ is of ramification type $(g,G,\mathbf C)$ if the branch points of $X\to X/G$ can be labelled $p_1,\dots,p_r$ so that $C_i$ is the class of a distinguished inertia generator over $p_i$. Let $\cM(g,G,\mathbf C)\subset\cM_g$ be the locus of curves admitting such an action. By \cite[Lemma 16]{malmendier2019}, which goes back to \cite{mssv2001}, every component of $\cM(g,G,\mathbf C)$ has dimension
\begin{equation}\label{eq:delta}
\delta(g,G,\mathbf C)=3g_0-3+r,
\end{equation}
where $g_0$ is the genus of $X/G$. If $H\le G$, restriction of the action gives
\begin{equation}\label{eq:incl}
\cM(g,G,\mathbf C)\subset\cM(g,H,\mathbf\Delta)
\end{equation}
for the induced type $\mathbf\Delta$ \cite[\S 6.5]{malmendier2019}. The inclusions among the loci with $G$ the full automorphism group were determined in \cite{mssv2001} for small genus; the poset for $g=3$ and the $41$ loci for $g=4$ are displayed in \cite[\S 6.5]{malmendier2019}.

\subsection{The loci $\cS_{g,n}$}\label{sec:Sgn}

For a cyclic group $G=\langle\tau\rangle$ of order $n$ acting with quotient $\PP^1$, the ramification type is the branch data of \cref{sec:branch}, and \cref{eq:delta} reads $\delta=r-3$ with $r$ the number of branch points. Let $\mathbf C_m$ be the branch data $(1,\dots,1)$ of length $m$, together with $l_\infty\equiv -m$ when $n\nmid m$; this is the branch data of a cyclic cover of degree $n$ with a separable model $y^n=h(x)$, $\deg h=m$. The genus of such a cover is given by \cref{eq:genus}, and the datum is determined by $g$ and $n$ alone.

\begin{lem}\label{lem:unique-m}
Let $g\ge 2$ and $n\ge 2$. Then \cref{eq:genus} has at most two solutions $m\ge 2$. If it has two, they are $m$ and $m+1$ with $n\mid m+1$, and then $\mathbf C_m$ and $\mathbf C_{m+1}$ agree up to \cref{lem:equiv}.
\end{lem}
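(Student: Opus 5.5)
Fix $n$ and write $f(m)=(n-1)(m-1)-\gcd(n,m)+1$, so that \cref{eq:genus} reads $f(m)=2g$. The plan is to show that $f$ is non-decreasing in $m$ and increases strictly except in one specific situation.

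First I compute the increment
\[
f(m+1)-f(m)=(n-1)-\gcd(n,m+1)+\gcd(n,m).
\]
Since $\gcd(n,m)\ge 1$ and $\gcd(n,m+1)\le n$, this is always $\ge 0$. It equals zero exactly when $\gcd(n,m+1)=n$ and $\gcd(n,m)=1$, that is, when $n\mid m+1$. In that case $\gcd(n,m)=1$ holds automatically, because $m\equiv -1\pmod n$.

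Next I bound the increment over two steps. If $f(m+1)=f(m)$, then $n\mid m+1$, so $n\nmid m+2$ (as $n\ge 2$). Hence $f(m+2)>f(m+1)$. Consequently no three consecutive values of $f$ coincide. Since $f$ is non-decreasing, every fibre $f^{-1}(2g)$ consists of consecutive integers. So a fibre has at most two elements, and if it has two they are $m,m+1$ with $n\mid m+1$.

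For the final claim, assume $n\mid m+1$.
\begin{itemize}
\item $\mathbf C_m$ consists of $m$ ones, together with $l_\infty\equiv -m\equiv 1$; here $n\nmid m$ because $\gcd(n,m)=1$ and $n\ge 2$. So it is $m+1$ branch points, all with monodromy $1$.
\item $\mathbf C_{m+1}$ consists of $m+1$ ones and no branch point at infinity, since $n\mid m+1$.
\end{itemize}
Both data are thus $m+1$ entries equal to $1$, and they agree up to a permutation and a Möbius transformation in the sense of \cref{lem:equiv}, with $c=1$. Concretely, the transformation sends a finite root of an $(m+1)$-degree separable model to infinity.

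The only subtle point is the equality analysis of the increment; the rest is bookkeeping. The restriction $m\ge2$ is harmless.
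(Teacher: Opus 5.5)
Your proof is correct and uses the same computation as the paper: the difference of two instances of \cref{eq:genus} is controlled by the bounds $1\le\gcd(n,\cdot)\le n$. The paper applies this directly to two arbitrary solutions $m<m'$, obtaining $(n-1)(m'-m)=\gcd(n,m')-\gcd(n,m)\le n-1$, which forces $m'=m+1$ and $n\mid m+1$ at once and makes your monotonicity and consecutive-fibre step unnecessary; your identification of $\mathbf C_m$ and $\mathbf C_{m+1}$ as $m+1$ entries equal to $1$ matches the paper's.
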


\begin{proof}
Let $m<m'$ both satisfy \cref{eq:genus} and put $d=\gcd(n,m)$ and $d'=\gcd(n,m')$. Subtracting the two instances of \cref{eq:genus} gives $(n-1)(m'-m)=d'-d$. Since $1\le d,d'\le n$, the right hand side is at most $n-1$, so $m'=m+1$ and $d'-d=n-1$, which forces $d=1$ and $d'=n$, that is $n\mid m+1$. Then $-m\equiv 1\pmod n$, so $\mathbf C_m$ is $(1,\dots,1)$ of length $m+1$, the last entry being the local monodromy at infinity; and $n\mid m+1$ makes infinity unramified for $\mathbf C_{m+1}$, which is $(1,\dots,1)$ of length $m+1$ as well. The two agree after the M\"obius transformation of \cref{lem:equiv} carrying the branch point at infinity to a finite point.
\end{proof}

When \cref{eq:genus} has a solution we write $m(g,n)$ for the smaller one, and $\mathbf C_{g,n}$ for the branch datum $\mathbf C_{m(g,n)}$, which by \cref{lem:unique-m} is the only one attached to the pair $(g,n)$.

\begin{definition}\label{def:Sgn}
Let $g\ge 2$ and $n\ge 2$. The superelliptic locus of level $n$ is
\[
\cS_{g,n}=\{\,[C]\in\cM_g : C \text{ admits a model } y^n=h(x) \text{ with } h\in k[x] \text{ separable}\,\}.
\]
\end{definition}

\begin{prop}\label{prop:Sgn-irred}
Let $g\ge 2$ and $n\ge 2$ with $\cS_{g,n}\ne\emptyset$, and put $m=m(g,n)$. Then
\begin{equation}\label{eq:Sunion}
\cS_{g,n}=\cM(g,C_n,\mathbf C_{g,n}),
\end{equation}
and $\cS_{g,n}$ is irreducible of dimension
\begin{equation}\label{eq:dimS}
\dim\cS_{g,n}=
\begin{cases}
m-3, & n\mid m,\\
m-2, & n\nmid m.
\end{cases}
\end{equation}
\end{prop}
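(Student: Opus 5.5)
We prove the equality of sets first, then irreducibility, then the dimension count.

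\emph{The equality \cref{eq:Sunion}.} Suppose $[C]\in\cS_{g,n}$, with a model $y^n=h(x)$ where $h$ is separable of degree $m'$. Then $\tau(x,y)=(x,\zeta_n y)$ generates a cyclic group of order $n$ acting with quotient $\PP^1_x$, and its branch data is $\mathbf C_{m'}$. Since $g$ is fixed, $m'$ satisfies \cref{eq:genus}, so by \cref{lem:unique-m} either $m'=m(g,n)$ or $m'=m+1$; in the latter case $\mathbf C_{m'}$ agrees with $\mathbf C_{g,n}$ up to \cref{lem:equiv}. Hence $[C]\in\cM(g,C_n,\mathbf C_{g,n})$.

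Conversely, let $C$ have an action of $C_n$ of type $\mathbf C_{g,n}$. Choose the generator $\tau$ whose inertia classes are the entries $1$. This is possible because the datum is defined up to the unit $c$ of \cref{lem:equiv}. Then \cref{eq:model} gives $y^n=\prod_{j=1}^{m}(x-p_j)$ with all $l_j=1$, and the monodromy at infinity is forced to be $-m$. So $C$ has a separable model and $[C]\in\cS_{g,n}$. The one point needing care is the convention for which branch point lies at infinity. If the datum is presented with all $m+1$ entries equal to $1$ and $n\mid m+1$, we move one branch point to $\infty$ by a M\"obius transformation, which leaves $m$ finite roots, each with exponent $1$.

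\emph{Irreducibility.} The locus is the image of a parameter space of branch configurations. The map
\[
(p_1,\dots,p_m)\mapsto [\,y^n=\textstyle\prod_j(x-p_j)\,],
\]
defined on the complement $U\subset\mathbb A^m$ of the big diagonal, is a morphism from an irreducible variety onto $\cS_{g,n}$, by the previous step. The image of an irreducible variety is irreducible, so $\cS_{g,n}$ is irreducible. This bypasses the component count in \cite{mssv2001}, since here the datum has no braid-orbit ambiguity.

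\emph{Dimension.} By \cref{eq:delta} with $g_0=0$, $\dim\cS_{g,n}=r-3$, where $r$ is the number of branch points. If $n\mid m$, infinity is unramified and $r=m$. If $n\nmid m$, infinity is a branch point and $r=m+1$. This gives $m-3$ and $m-2$ respectively. Equivalently, the fibres of $U\to\cS_{g,n}$ are finite unions of orbits of the $3$-dimensional group of affine and M\"obius transformations preserving the configuration type (the affine group has dimension $2$, plus the freedom of moving a finite point to $\infty$ when infinity is unramified). Since $\Aut(C)$ is finite, only finitely many configurations give the same curve.

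The main obstacle is justifying that \cref{eq:delta} applies, that is, that the locus has the expected dimension and that the map from configurations to $\cM_g$ has finite fibres modulo $\mathrm{PGL}_2$. I would handle it by the direct argument: two configurations give isomorphic curves only if some isomorphism carries one cyclic subgroup to another. There are finitely many cyclic subgroups of $\Aut(C)$, and each subgroup determines its branch set up to M\"obius transformation, by \cref{lem:equiv}. Together these give finiteness of the fibres.
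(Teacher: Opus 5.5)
Your proposal is correct and follows the paper's proof. You obtain the equality \cref{eq:Sunion} from \cref{lem:unique-m} and the Kummer correspondence of \cref{lem:equiv}; the paper uses \cref{thm:sep}, which places the exceptional branch point at infinity just as you do. Irreducibility comes as the image of the configuration space $U$, and the dimension from \cref{eq:delta} with $r=m$ or $r=m+1$. Your extra finite-fibre count, modulo affine or M\"obius transformations, is a valid direct check of \cref{eq:delta} in this case; the paper simply cites the general Hurwitz-locus formula.
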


\begin{proof}
A curve in $\cS_{g,n}$ has a separable model $y^n=h(x)$ whose degree solves \cref{eq:genus}, and by \cref{lem:equiv} and \cref{thm:sep} the branch data of the corresponding cover is $\mathbf C_{\deg h}$. If $\deg h=m+1$, then moving a root of $h$ to infinity replaces $\mathbf C_{m+1}$ by $\mathbf C_m$, by \cref{lem:unique-m}, so $C$ has a separable model of degree $m$. Conversely a curve of ramification type $(g,C_n,\mathbf C_{g,n})$ has all but at most one entry of its branch data equal to $1$, hence a separable model by \cref{thm:sep}. This is \cref{eq:Sunion}.

Scaling $y$ makes $h$ monic, so $\cS_{g,n}$ is the set of classes of the curves
\[
y^n=\prod_{j=1}^{m}(x-p_j),\qquad p_1,\dots,p_m\in k \text{ distinct}.
\]
Let $U\subset\mathbb A^m$ be the complement of the discriminant locus, a non-empty open subvariety of $\mathbb A^m$ and therefore irreducible. The displayed equations define a family of smooth projective curves of genus $g$ over $U$, hence a morphism $U\to\cM_g$ whose image is $\cS_{g,n}$. The image of an irreducible variety under a morphism is irreducible.

For the dimension, the number of branch points of $\mathbf C_{g,n}$ is $r=m$ if $n\mid m$ and $r=m+1$ otherwise, so $\delta=r-3$ gives \cref{eq:dimS}.
\end{proof}

For $n=2$ the locus $\cS_{g,2}$ is the hyperelliptic locus, of dimension $2g-1$.

\begin{prop}\label{prop:Sgn}
Let $g\ge 2$.
\begin{enumerate}[(i)]
\item $\cS_{g,n}=\emptyset$ for $n>2g+2$.
\item For $g+2<n\le 2g+2$, the locus $\cS_{g,n}$ is empty unless $n\in\{2g+1,2g+2\}$, and then it consists of the single point $y^n=x^2-1$, which lies in the hyperelliptic locus.
\item Let $3\le n\le g+2$ and $\cS_{g,n}\ne\emptyset$. Then the datum $\mathbf C_{g,n}$ has $m\ge 3$. If $3\mid g+2$, then $\cS_{g,g+2}$ contains the non-hyperelliptic curve $y^{g+2}=x^3-x$.
\end{enumerate}
\end{prop}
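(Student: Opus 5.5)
The whole argument runs through the genus formula \cref{eq:genus}, $2g=(n-1)(m-1)-\gcd(n,m)+1$, read as a constraint on $m=m(g,n)$ once $g$ and $n$ are fixed. By \cref{prop:Sgn-irred}, $\cS_{g,n}\ne\emptyset$ exactly when \cref{eq:genus} has a solution $m\ge 2$; for $m=1$ the curve is rational, so $m\ge 2$ is forced. The plan is to split by the value of $m$.

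\emph{Case $m=2$.} Since $\gcd(n,2)\in\{1,2\}$, the formula gives $2g=n-\gcd(n,2)+1$. So $n=2g$ or $n=2g+1$ when $n$ is odd, and $n=2g+1$ or $n=2g+2$ when $n$ is even. Parity leaves $n=2g+1$ ($n$ odd, $\gcd=1$) and $n=2g+2$ ($n$ even, $\gcd=2$). A separable quadratic can be moved by a M\"obius transformation and scaling to $x^2-1$, so the locus is the single point $y^n=x^2-1$. This curve is hyperelliptic: the map $y$ has degree $2$, because $x^2=y^n+1$.

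\emph{Case $m\ge 3$.} Here I will show $n\le g+2$. Since $\gcd(n,m)\le n$,
\[
2g\ge (n-1)(m-1)-n+1=(n-1)(m-2)\ge n-1 .
\]
This only yields $n\le 2g+1$, so a sharper bound is needed. For $m=3$, the computation of \cref{ex:top} gives $g=n-1$ or $g=n-2$, hence $n\le g+2$. For $m\ge4$ we have $2g\ge 2(n-1)-\gcd(n,m)+1\ge n-1$, which is still weak, so I will split further:
\begin{itemize}
\item[] If $\gcd(n,m)\le n/2$, then $2g\ge 3(n-1)-n/2+1$, which gives $n\le g+1$ for $n\ge 2$.
\item[] If $\gcd(n,m)=n$, then $n\mid m$, so $m\ge n$ (and $m\ge 4$). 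Then $2g=(n-1)(m-2)\ge 2(n-1)$, so $n\le g+1$.
\end{itemize}
Checking the constants in the first subcase is routine.

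\emph{Assembling the parts.} Together these give (i) and (ii): if $n>g+2$, then $m=2$, which forces $n\in\{2g+1,2g+2\}$, and no $n>2g+2$ occurs. They also give the first assertion of (iii). For $3\le n\le g+2$, a solution with $m=2$ would force $n\ge 2g+1>g+2$ when $g\ge 2$, so $m\ge 3$. The case $g+2=2g+1$, i.e. $g=1$, is excluded. When $g=2$ we have $2g+1=5>4=g+2$, so this is fine.

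\emph{The example in (iii).} If $3\mid g+2$, put $n=g+2$. By \cref{ex:top} with $3\mid n$, the curve $y^n=x^3-x$ has genus $n-2=g$. For $g\ge 4$, \cref{ex:top} shows the vanishing sequence at a point over a root contains $1$ and $n>g-1$. By the characterization of Weierstrass points on hyperelliptic curves (sequence $(0,2,\dots,2g-2)$) it is neither generic nor hyperelliptic-Weierstrass, so the curve is not hyperelliptic. I expect this to be the main obstacle: one must check that a non-Weierstrass point of a hyperelliptic curve has sequence $(0,\dots,g-1)$, so a sequence that is non-generic but not hyperelliptic-Weierstrass rules out hyperellipticity. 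The case $n=3$, $g=1$ is excluded, so the first relevant case is $g=4$, $n=6$.
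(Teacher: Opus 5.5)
Your proof is correct in substance and follows the paper's route. You split on $m=2$ versus $m\ge 3$ in \cref{eq:genus}, normalise the quadratic to $x^2-1$, and read hyperellipticity off $x^2=y^n+1$. You then take the example from \cref{ex:top}, combined with the fact that every point of a hyperelliptic curve has vanishing sequence $(0,1,\dots,g-1)$ or $(0,2,\dots,2g-2)$.

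One computation needs correcting. For $m=2$ the formula gives $2g=(n-1)-\gcd(n,2)+1=n-\gcd(n,2)$, not $n-\gcd(n,2)+1$. Taken literally, your version has no solution of either parity. Your intermediate list ($n=2g$ or $n=2g+1$ for $n$ odd, and so on) does not follow from anything written. With the correct formula, $n$ odd gives $n=2g+1$ and $n$ even gives $n=2g+2$. That is the conclusion you state, and it also shows that $y^n=x^2-1$ has genus $g$ for these $n$, which you need for the locus to be the single point claimed in (ii).

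For $m\ge 3$, your case split is valid: you treat $m=3$ separately, then for $m\ge 4$ distinguish $\gcd(n,m)=n$ from $\gcd(n,m)\le n/2$, and these exhaust the divisors of $n$. However, the split is unnecessary. \Cref{lem:dichotomy} bounds $\gcd(n,m)\le m$ rather than $\le n$, which gives
\[
2g\ge (n-1)(m-1)-m+1=(m-1)(n-2)\ge 2(n-2),
\]
hence $n\le g+2$, in one line.
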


\begin{proof}
By \cref{lem:dichotomy}, a separable model $y^n=h(x)$ of a curve of genus $g\ge2$ has either $m=2$ and $n\in\{2g+1,2g+2\}$, or $m\ge 3$ and $n\le g+2$. This gives (i) and the emptiness statement in (ii). If $m=2$, then $h=c(x-a)(x-b)$, and an affine change of $x$ and a scaling of $y$ bring the model to $y^n=x^2-1$, so the locus is one point; it is hyperelliptic by \cref{lem:dichotomy}. Its dimension is $r-3=0$, since $r=3$. Part (iii) is \cref{lem:dichotomy} together with \cref{thm:bound} and \cref{ex:top}.
\end{proof}

The loci for distinct levels intersect: the Picard curve of \cref{ex:picard} lies in $\cS_{3,3}\cap\cS_{3,4}$, and \cref{ex:picard-alg} recovers both levels from a plane model in which neither is visible. For every $n\in\Lev(C)$ admitting a separable model one has $[C]\in\cS_{g,n}$, and \cref{eq:incl} places $C$ in the strata of all subgroups of the corresponding cyclic groups.

The hypothesis $\cS_{g,n}\ne\emptyset$ in \cref{prop:Sgn} is not vacuous, and it can fail for every level $3\le n\le g+2$ at once.

\begin{rem}\label{rem:genus5}
Let $g=5$. The solutions of \cref{eq:genus} with $m\ge 2$ are
\[
(n,m)\in\{(2,11),\,(2,12),\,(11,2),\,(12,2)\},
\]
so $\cS_{5,n}=\emptyset$ for every $n\notin\{2,11,12\}$. By \cref{prop:Sgn} the loci $\cS_{5,11}$ and $\cS_{5,12}$ are single hyperelliptic points. Hence no non-hyperelliptic curve of genus five satisfies \cref{def:superelliptic}, for any level.

Cyclic covers of genus five with quotient of genus zero are nevertheless abundant. The levels realised are $n\in\{2,3,4,6,8,10,11,12,15,20,22\}$, and for $n\notin\{2,11,12\}$ no branch datum satisfies \cref{eq:sepcond}. Representative data, each of genus five by \cref{eq:RH}, are
\[
\begin{split}
n&=3, \quad (l_j)=(1,1,1,1,1,2,2), \\
n&=4, \quad (l_j)=(1,1,1,1,2,2), \\
n&=6, \quad (l_j)=(1,1,5,5), \\
n&=8, \quad (l_j)=(1,1,2,4).
\end{split}
\]
For $n=3$ the cyclic group is normal in $\Aut(C)$ by \cref{rem:normality}, so these curves are superelliptic in the sense of \cite{beshaj2011, malmendier2019}; for the other levels that sense requires in addition the normality of the cyclic group in $\Aut(C)$, which the branch data alone does not determine. The genus five entries of the tables of \cite{beshaj2011, sanjeewa2008} record the families satisfying it. Since no non-hyperelliptic curve of genus five satisfies \cref{def:superelliptic}, the two conventions share no non-hyperelliptic curve in genus five. Of the levels without a separable model, $n=3$ and $n=4$ satisfy $n\le g$ and every datum above has at least two totally ramified points, so they are recovered by \cref{rem:incomplete}; the levels $6,8,10,15,20,22$ exceed $g$ and lie outside it.

The genus five case is not isolated. For $2\le g\le 100$, equation \cref{eq:genus} has no solution with $n\ge 3$ and $m\ge 3$ exactly for
\[
g\in\{2,\,5,\,8,\,11,\,23,\,29,\,38,\,41,\,53,\,68,\,83,\,89\},
\]
and for these genera $\cS_{g,n}$ is non-empty only for $n=2$ and the two levels of \cref{prop:Sgn}(ii).
\end{rem}

\subsection{Top nodes and the level}\label{sec:top}

Order the loci $\cM(g,G,\mathbf C)$ by inclusion. Every non-trivial $G$ contains a cyclic subgroup of prime order $p$, so by \cref{eq:incl} the maximal elements are among the loci $\cM(g,C_p,\mathbf C)$ with $p$ prime; these are the top nodes of the diagrams of \cite[\S 6.5]{malmendier2019}. To place a curve in a diagram one determines the top nodes containing it, that is the orders of the cyclic subgroups of $\Aut(C)$ with the genus of their quotients, and descends. $\Lev(C)$ indexes the cyclic actions on $C$ whose quotient has genus zero. The top nodes of the diagram arise from subgroups of prime order, but their quotients need not have genus zero, so the level spectrum records only the quotient-genus-zero part of the top-node data, and it is not read off the gonality either. \Cref{alg:main} computes the elements of $\Lev(C)$ admitting a separable model, with the normal form that identifies the stratum. Cyclic subgroups with quotient of positive genus are outside the scope of this paper.

The stratum determines the automorphism group. For a curve in $\cS_{g,n}$ whose cyclic group is normal in $\Aut(C)$, the tables of \cite{sanjeewa2008}, arranged by level in \cite{beshaj2011} and reproduced as \cite[Theorem 21, Table 4]{malmendier2019}, give $\Aut(C)$ together with a parametric equation whose number of parameters is the dimension \cref{eq:delta} of the Hurwitz locus; for $n=3$ and $g\ge 5$ the normality is automatic by \cref{rem:normality}. The normal form returned by \cref{alg:main} is what selects the entry of those tables.

\section{Bounds on the level and Weierstrass points}\label{sec:wp}


Next we get some bounds on the level of each genus $g\geq 2$.   The following is a well known result. 

\begin{lem}\label{lem:dichotomy}
Let $C: y^n=h(x)$ with $h$ separable of degree $m\ge 2$ and $g\ge 2$.
\begin{enumerate}[(i)]
\item If $m=2$, then $n\in\{2g+1,2g+2\}$ and $C$ is hyperelliptic.
\item If $m\ge 3$, then $2g\ge (m-1)(n-2)$ and $n\le g+2$.
\end{enumerate}
\end{lem}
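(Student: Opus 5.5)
The whole argument runs through the genus formula \cref{eq:genus}, $2g=(n-1)(m-1)-\gcd(n,m)+1$, and I treat the two cases separately.

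\emph{Case $m=2$.} Here $\gcd(n,2)\in\{1,2\}$. If $n$ is odd, \cref{eq:genus} gives $2g=n-1$, so $n=2g+1$. If $n$ is even, it gives $2g=n-2$, so $n=2g+2$. For hyperellipticity I use the roles of the variables recorded in the introduction. Write $h=c(x-a)(x-b)$ and normalise to $y^n=x^2-1$. Then $x$ is a square root of $y^n+1$, which is separable in $y$. So the map $(x,y)\mapsto y$ has degree two, and $x\mapsto -x$ is an involution whose quotient is $\PP^1_y$. Hence $C$ is hyperelliptic, and since $g\ge 2$ this degree two map is the hyperelliptic one.

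\emph{Case $m\ge 3$.} Since $\gcd(n,m)\le n$, \cref{eq:genus} gives
\[
2g\ge (n-1)(m-1)-n+1=(n-1)(m-2)+(n-1)-n+1\cdot 0 .
\]
More cleanly, $(n-1)(m-1)-n+1=(m-1)(n-2)+(m-1)-(n-1)+\dots$, so I will simply expand. We have $(n-1)(m-1)-(n-1)=(n-1)(m-2)$. This is at least $(m-1)(n-2)$ because $(n-1)(m-2)-(m-1)(n-2)=n-m$. That inequality is only good when $n\ge m$, so I split once more.

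If $\gcd(n,m)\le m$, which always holds, then $2g\ge (n-1)(m-1)-m+1=(m-1)(n-2)$. This is exactly the claimed bound, uniformly in $n$ and $m$. Taking $m\ge 3$ then gives $2g\ge 2(n-2)$, that is $n\le g+2$.

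So the key step is just to bound $\gcd(n,m)$ by $m$ rather than by $n$. There is no real obstacle. The only care needed is choosing that bound and checking the arithmetic of the two parity cases for $m=2$.
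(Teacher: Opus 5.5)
Your final argument is correct and is the paper's proof: for $m=2$ you split \cref{eq:genus} by the parity of $n$ and use that $x$ is quadratic over $k(y)$, and for $m\ge 3$ you bound $\gcd(n,m)\le m$ to get $2g\ge(m-1)(n-2)\ge 2(n-2)$. Delete the abandoned detour that precedes it: its first display is garbled, and its identity has the wrong sign, since in fact $(n-1)(m-2)-(m-1)(n-2)=m-n$.
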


\begin{proof}
Suppose $m=2$. Then \cref{eq:genus} gives $2g=n-\gcd(n,2)$, so $n=2g+2$ if $n$ is even and $n=2g+1$ if $n$ is odd. Moreover $x$ satisfies a quadratic equation over $k(y)$, so $[k(C):k(y)]=2$ and $C$ is hyperelliptic. Suppose $m\ge 3$. Since $\gcd(n,m)\le m$, formula \cref{eq:genus} gives $2g\ge (n-1)(m-1)-m+1=(m-1)(n-2)\ge 2(n-2)$, whence $n\le g+2$.
\end{proof}

\begin{theorem}\label{thm:bound}
Let $C$ admit a cyclic cover of degree $n$ with a model \cref{eq:model} in which $h$ is separable, and let $g\ge 2$. Then $n\le 2g+2$, and the bound is attained by $y^{2g+2}=x^2-1$. If $C$ is not hyperelliptic, then $n\le g+2$, and the bound is attained by $y^n=x^3-x$ with $3\mid n$, $n\ge 6$.
\end{theorem}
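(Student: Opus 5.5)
The proof splits on $m=\deg h$, using \cref{lem:dichotomy} for the bounds and \cref{eq:genus} for the extremal examples. First I dispose of $m=1$. Then $y^n=x-p_1$ and $x$ is a polynomial in $y$, so $C$ is rational. This contradicts $g\ge2$; formally, \cref{eq:genus} gives $2g=0-1+1=0$. Hence $m\ge2$ and \cref{lem:dichotomy} applies. If $m=2$, part (i) gives $n\in\{2g+1,2g+2\}$, so $n\le 2g+2$. If $m\ge3$, part (ii) gives $n\le g+2\le 2g+2$. This proves the first bound in all cases.

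For the non-hyperelliptic statement I use the same case split. By \cref{lem:dichotomy}(i), $m=2$ forces $C$ to be hyperelliptic. So a non-hyperelliptic $C$ has $m\ge3$, and \cref{lem:dichotomy}(ii) gives $n\le g+2$.

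For sharpness of the first bound, take $y^{2g+2}=x^2-1$. Then $n=2g+2$ and $m=2$, and \cref{eq:genus} gives $2\cdot\mathrm{genus}=(2g+1)\cdot1-2+1=2g$. So the genus is $g$ and $n=2g+2$ is attained.

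For sharpness of the second bound, take $y^n=x^3-x$ with $3\mid n$ and $n\ge6$. By \cref{ex:top} its genus is $n-2$, so $n=g+2$ with $g=n-2\ge4\ge2$. It remains to show this curve is not hyperelliptic, which is the main step. Let $P$ be a point above a root of $x^3-x$. By \cref{ex:top}, its vanishing sequence $s_{n,3}$ is $(0,1,\dots,\tfrac{2n}{3}-2,\,n,\dots,\tfrac{4n}{3}-2)$. This sequence contains $1$ and $n>g-1$.

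I argue by contradiction. Suppose $C$ were hyperelliptic, with hyperelliptic involution $\iota$. The holomorphic differentials are then pulled back from $\PP^1$ times a fixed differential. Concretely, they are $\varphi(x_0)\,dx_0/y_0$ with $\deg\varphi\le g-1$, where $x_0$ is the degree two map. At a ramification point of $x_0$, the vanishing sequence is $(0,2,\dots,2g-2)$, which does not contain $1$. At a non-ramification point it is generic, $(0,\dots,g-1)$, which does not contain $n>g-1$. So every point has one of these two sequences, and neither equals $s_{n,3}$. This contradicts the computation at $P$, so $C$ is not hyperelliptic.

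The one point needing care is the description of vanishing sequences on a hyperelliptic curve. I would justify it quickly from the canonical map factoring through the degree two map onto a rational normal curve. Alternatively, I would cite that the gap sequences of a hyperelliptic curve are only these two types.
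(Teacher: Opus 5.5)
Your proof is correct and follows the paper's own argument: the two bounds come from the two cases of \cref{lem:dichotomy} (with $m=2$ forcing hyperellipticity), the extremal examples come from \cref{eq:genus}, and non-hyperellipticity of $y^n=x^3-x$ comes from the vanishing sequence of \cref{ex:top}. You also spell out two points the paper leaves implicit: the exclusion of $m=1$, and the fact that on a hyperelliptic curve every point has either the generic sequence or $(0,2,\dots,2g-2)$.
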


\begin{proof}
The two bounds are the two cases of \cref{lem:dichotomy}, since $m\ge 2$ always and $m=2$ forces $C$ hyperelliptic. For $h=x^2-1$ and $n=2g+2$ formula \cref{eq:genus} gives $2g'=(2g+1)-2+1=2g$, so the curve has genus $g$ and level $2g+2$. For $h=x^3-x$ and $3\mid n$ formula \cref{eq:genus} gives $2g'=2(n-1)-3+1=2n-4$, so $g'=n-2$ and $n=g'+2$. This curve is not hyperelliptic by \cref{ex:top}.
\end{proof}

\begin{rem}\label{rem:mssv}
Wiman \cite{wiman1895} showed that an automorphism of a curve of genus $g\ge 2$ has order at most $4g+2$; see \cite{mssv2001} for a survey and \cite{homma1980} for the extremal curves. Since a level is the order of an automorphism, $n\le 4g+2$ for every cyclic cover. \Cref{thm:bound} is sharper by a factor of two, and the difference is the separability hypothesis. Without it the bound $2g+2$ fails, by \cref{eq:RH}:
\[
\begin{array}{lll}
n=10, & (l_j)=(1,4,5), & g=2,\\
n=9,  & (l_j)=(1,2,6), & g=3,\\
n=4g+2, & y^{4g+2}=x^{2g}(x-1)^{2g+1}, & \text{genus } g,
\end{array}
\]
the last attaining Wiman's bound.
\end{rem}

\begin{rem}\label{rem:lemma14}
In \cite[Lemma 14]{malmendier2019} and \cite[Lemma 17]{shor} the inequality $g\ge n$, with equality only for $(n,m)\in\{(2,5),(2,6),(3,4)\}$, is proved for $h$ separable of degree $m$ under the standing hypothesis $m>n$ of \cite[Definition 15]{shor}, through the step that $n\ge 4$ forces $m\ge 5$. Outside that hypothesis the inequality fails:
\[
y^5=x(x^2-1),\ g=4;\qquad y^7=x^3+1,\ g=6;\qquad y^{11}=x^2+1,\ g=5,
\]
the first of which appears in the genus-four classification of \cite{malmendier2019}. \Cref{lem:dichotomy} gives the bounds valid for every $m\ge 2$.
\end{rem}

\begin{exa}\label{ex:top-alg}
Let $C: y^5=x^3-x$, of genus four. By \cref{ex:top}, with $P_\infty$ the point above $x=\infty$, totally ramified since $\gcd(5,3)=1$,
\[
\begin{array}{lll}
\text{point} & \text{sequence} & w\\
\hline
(0,0),\ (\pm 1,0) & (0,1,2,5)=s_{5,3} & 2\\
P_\infty & (0,1,3,6) & 4
\end{array}
\]
This is the case $n=g+1$ excluded by the hypothesis $m>n$ of \cite{shor}.

The curve has a second level. The automorphism $\rho(x,y)=(-x,-\zeta_5 y)$ has order ten; its fixed points are $(0,0)$ and $P_\infty$, the points $(\pm1,0)$ form an orbit with stabiliser of order five, and no other point has a non-trivial stabiliser. Riemann--Hurwitz gives
\[
6=10(2h-2)+9+9+8,
\]
so $C/\langle\rho\rangle$ has genus zero and $10\in\Lev(C)$. Since $10=2g+2>g+2$ and $C$ is not hyperelliptic, \cref{thm:bound} shows that this level has no separable model. Indeed, from $y^{10}=x^2(x^2-1)^2$ one reads off
\[
v^{10}=u\,(u-1)^2,\qquad (u,v)=(x^2,\ y),
\]
with branch data $(1,2,7)$, which fails \cref{eq:sepcond}. So $[C]\in\cS_{4,5}$, and $C$ admits a cyclic cover of degree $10$ without lying in $\cS_{4,10}$. That locus is not empty: here $10=2g+2$, so by \cref{prop:Sgn} it is the single point $y^{10}=x^2-1$, which is hyperelliptic.
\end{exa}

\subsection{Total ramification}\label{sec:total}

Let $C$ be given by \cref{eq:model} and let $p_i$ be a totally ramified branch point with unique point $P$ above it. Then
\begin{equation}\label{eq:div}
\Div(x-p_i)=nP-D_\infty,
\end{equation}
where $D_\infty$ is the polar divisor of $x$, of degree $n$ and supported over $x=\infty$. Hence $1/(x-p_i)$ lies in $L(nP)$ and is not constant, so
\begin{equation}\label{eq:key}
h^0(nP)\ge 2.
\end{equation}
If $p_j$ is a second totally ramified branch point with point $P'$ above it, then
\begin{equation}\label{eq:pair}
\Div\Bigl(\frac{x-p_i}{x-p_j}\Bigr)=nP-nP',
\end{equation}
so $nP\sim nP'$ and $(x-p_i)/(x-p_j)$ spans $L(nP'-nP)$, a space of dimension one.

\begin{prop}\label{prop:small}
Let $C$ admit a cyclic cover of degree $n$, let $g\ge 2$, and let $P$ lie over a totally ramified branch point. If $n\le g$, then $P$ is a Weierstrass point of $C$.
\end{prop}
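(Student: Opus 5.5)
We prove the statement through the Weierstrass gap theorem: in characteristic zero, $P$ is a Weierstrass point exactly when some integer $j\le g$ is a non-gap at $P$, that is, when $h^0(jP)\ge 2$ for some $1\le j\le g$. This is the same as the vanishing sequence at $P$ differing from $(0,1,\dots,g-1)$. The non-gaps of $P$ are the integers $j$ with $h^0(jP)>h^0((j-1)P)$. Riemann--Roch gives $h^0(jP)=1$ for all $j\le g$ exactly when the gap sequence is $1,\dots,g$. That in turn is exactly when the vanishing sequence of holomorphic differentials at $P$ is generic, since $j$ is a gap if and only if some holomorphic differential vanishes to order exactly $j-1$ at $P$.

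Next we produce a non-gap at $P$ that is at most $n$. The cover is cyclic, so it has a model \cref{eq:model}. Apply a M\"obius transformation in $x$, as allowed by \cref{lem:equiv}, to place the totally ramified branch point $p_i$ under $P$ at a finite value. By \cref{eq:div}, $\Div(x-p_i)=nP-D_\infty$ with $D_\infty\ge0$. Hence $1/(x-p_i)$ is a non-constant function whose only pole is at $P$, of order exactly $n$. This is \cref{eq:key}: $h^0(nP)\ge 2$, so $n$ is a non-gap at $P$.

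If $n\le g$, then $n$ is a non-gap lying in $\{1,\dots,g\}$, so the gap sequence at $P$ is not $\{1,\dots,g\}$, and $P$ is a Weierstrass point.

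Two details need care, and neither is a real obstacle. First, $p_i$ might sit at $x=\infty$ in the given model. In that case we replace $x$ by $1/(x-c)$ for some non-branch value $c$. This keeps the cover and $\tau$, and only changes the branch data by a M\"obius transformation as in \cref{lem:equiv}. Alternatively, we can use directly that $x$ itself then has polar divisor $nP$. Second, we should state explicitly the equivalence between "Weierstrass point" as defined in the paper (a non-generic vanishing sequence) and the existence of a non-gap at most $g$. This is the standard translation via Riemann--Roch, $h^0(jP)-h^0((j-1)P)=1-\bigl(h^0(K-(j-1)P)-h^0(K-jP)\bigr)$, and it needs no result beyond characteristic zero, where the generic sequence is $(0,\dots,g-1)$.
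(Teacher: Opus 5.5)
Your proposal is correct and is essentially the paper's proof: $n$ is a non-gap at $P$ by \cref{eq:key}, and a non-Weierstrass point has no non-gaps in $\{1,\dots,g\}$. Your treatment of a totally ramified point over $x=\infty$ and your Riemann--Roch translation between gaps and vanishing orders make explicit what the paper's two-line argument leaves implicit.
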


\begin{proof}
By \cref{eq:key} the integer $n$ is a non-gap at $P$. A point that is not a Weierstrass point has non-gaps $\{0\}\cup\{k: k\ge g+1\}$, forcing $n\ge g+1$.
\end{proof}

\Cref{prop:small} needs no hypothesis beyond total ramification, but it leaves the range $n\ge g+1$ open. That range is not empty: for $m=3$ formula \cref{eq:genus} gives $n\in\{g+1,g+2\}$. Under separability the range is closed by the following theorem.

\subsection{Branch points are Weierstrass points}\label{sec:branchwp}

\begin{theorem}\label{thm:wp}
Let $C: y^n=h(x)$ with $h$ separable of degree $m\ge 2$ and $g\ge 2$, and let $P$ lie above a root of $h$. Then $P$ is a Weierstrass point of $C$ if and only if $m\ge 3$. If $m=2$, the vanishing sequence at $P$ is $(0,1,\dots,g-1)$.
\end{theorem}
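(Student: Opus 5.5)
The plan is to read everything off the explicit vanishing sequence of \cref{prop:filter}. Since $h$ is separable, every $l_j=1$, so each root $p_i$ of $h$ is totally ramified and $P$ is the unique point above it. By \cref{eq:snm} the vanishing sequence at $P$ is
\[
s_{n,m}=\bigl(an+b : a\ge 0,\ 0\le b<n,\ an+bm\le 2g-2\bigr),
\]
arranged in increasing order. In characteristic zero, $P$ is a Weierstrass point exactly when this differs from $(0,1,\dots,g-1)$. Because $s_{n,m}$ consists of $g$ distinct non-negative integers, that happens exactly when its largest element exceeds $g-1$, that is, when some admissible pair $(a,b)$ has $an+b\ge g$.

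For $m=2$, \cref{lem:dichotomy}(i) gives $n\in\{2g+1,2g+2\}$, so $n>g$. Since every entry of the sequence is at most $2g-2<n$, only $a=0$ occurs. The admissible $b$ are then those with $2b\le 2g-2$, namely $b=0,\dots,g-1$. So the sequence is $(0,1,\dots,g-1)$ and $P$ is not a Weierstrass point.

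For $m\ge 3$ I would split according to the size of $n$. If $n\le g$, \cref{prop:small} already shows that $P$ is a Weierstrass point. If $n\ge g+1$, then $n\le g+2$ by \cref{lem:dichotomy}(ii). The inequality $2g\ge(m-1)(n-2)$ then forces $m=3$, except in small cases such as $n=g+1$ with $g\le 3$, which I would check directly by hand. For $m=3$ the sequence is given explicitly in \cref{ex:top}, and it contains $n>g-1$. The same conclusion can be reached directly: the pair $(1,b)$ with $b=\lfloor (n-4)/3\rfloor$, resp.\ $\tfrac{n}{3}-2$ when $3\mid n$, is admissible whenever $n\ge 4$ (resp.\ $n\ge 6$), and it contributes the entry $n\ge g+1$.

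The delicate step is the boundary bookkeeping. One has to make sure that the range $n\ge g+1$ with $m\ge3$ really reduces to $m=3$ plus finitely many small cases, and that an admissible pair with $a=1$ exists in each of them. A cleaner uniform approach, which I would try first, is to show that for $m\ge 3$ the pair $(a,b)=(1,0)$ is admissible whenever $n\le 2g-2$, which gives the entry $n$. Indeed, \cref{lem:dichotomy}(ii) gives $n\le g+2\le 2g-2$ for $g\ge 4$. Then $n\ge g$ already produces a non-generic entry, and $n<g$ is covered by \cref{prop:small}. This leaves only $g\in\{2,3\}$, where the pairs $(n,m)$ solving \cref{eq:genus} with $m\ge3$, $n\ge3$ can be listed (e.g.\ $(3,4)$ and $(4,3)$ for $g=3$) and checked by hand.
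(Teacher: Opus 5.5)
Your proposal is correct and is essentially the paper's proof. The $m=2$ case is identical. For $m\ge 3$ the paper likewise combines \cref{prop:small} with \cref{lem:dichotomy} to reduce to $n\ge g+1$ with $m=3$ or $(n,m)=(4,4)$, and then checks $n\le 2g-2$, so that $(a,b)=(1,0)$ puts the entry $n>g-1$ into \cref{eq:snm}. Your uniform variant, which gets $n\le g+2\le 2g-2$ directly for $g\ge 4$ and checks $g\in\{2,3\}$ by hand, is just a slightly cleaner bookkeeping of the same idea. One small correction: the entry $n$ comes from the pair $(1,0)$, whereas $(1,\lfloor (n-4)/3\rfloor)$ contributes $n+\lfloor (n-4)/3\rfloor$.
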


\begin{proof}
Suppose $m=2$. By \cref{lem:dichotomy} we have $n\ge 2g+1>2g-2$, so in \cref{eq:snm} only $a=0$ occurs, and $2b\le 2g-2$ gives $b\le g-1$. Hence $s_{n,2}=(0,1,\dots,g-1)$ and $P$ is not a Weierstrass point.

Suppose $m\ge 3$ and, for contradiction, that $P$ is not a Weierstrass point. By \cref{prop:small} we have $n\ge g+1$, hence $g\le n-1$. Together with $2g\ge (m-1)(n-2)$ from \cref{lem:dichotomy} this gives
\[
(m-1)(n-2)\le 2g\le 2(n-1),
\]
so $m-1\le 2+2/(n-2)$ for $n\ge 3$. For $n\ge 5$ the right side is less than three, so $m=3$. For $n=4$ it gives $m\le 4$, and \cref{eq:genus} leaves $(n,m)\in\{(4,3),(4,4)\}$, both of genus three. For $n=3$, formula \cref{eq:genus} gives $g=1$ if $m=3$ and $g\ge 3>n-1$ if $m\ge 4$, so $n=3$ does not occur. In each remaining case $n\le 2g-2$: for $m=3$ we have $g=n-1$ if $3\nmid n$ and $g=n-2$ if $3\mid n$, so $2g-2$ is $2n-4\ge n$ for $n\ge 4$, respectively $2n-6\ge n$ for $n\ge 6$, and $3\mid n$ forces $n\ge 6$; for $(n,m)=(4,4)$ we have $2g-2=4=n$. Hence $(a,b)=(1,0)$ satisfies $an+bm\le 2g-2$, and by \cref{eq:snm} the integer $n$ occurs in the vanishing sequence at $P$. But $n\ge g+1>g-1$, which contradicts the assumption that the sequence is $(0,1,\dots,g-1)$.
\end{proof}

\begin{cor}\label{cor:finite}
Let $C$ have genus $g\ge 2$ and let $\tau$ realise a level $n\ge 2$ of $C$ by a cover with a separable model $y^n=h(x)$, $\deg h=m$. If $m\ge 3$, then $C$ has two Weierstrass points $P\ne P'$ with $nP\sim nP'$, both with vanishing sequence $s_{n,m}$, and $n\le g+2$. If $m=2$, then $C$ is hyperelliptic and $n\in\{2g+1,2g+2\}$. If $C$ is not hyperelliptic, only the first case occurs.
\end{cor}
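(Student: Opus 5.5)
The statement follows from \cref{lem:dichotomy}, \cref{thm:wp} and the computations of \cref{sec:total}. First fix the separable model $y^n=h(x)$ with $\deg h=m$, and split into the two cases $m\ge 3$ and $m=2$.

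\emph{Case $m=2$.} \Cref{lem:dichotomy}(i) gives directly that $C$ is hyperelliptic and that $n\in\{2g+1,2g+2\}$.

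\emph{Case $m\ge 3$.} We need two distinct totally ramified points above roots of $h$. Every root $p_j$ of $h$ has $l_j=1$, so $d_j=\gcd(n,1)=1$ and $p_j$ is totally ramified. Since $h$ is separable of degree $m\ge 3$, there are at least three distinct roots. Take two of them, $p_1\ne p_2$, with points $P,P'$ above them. These points are distinct because they lie over distinct points of $\PP^1$. Then \cref{eq:pair} gives $\Div\bigl((x-p_1)/(x-p_2)\bigr)=nP-nP'$, so $nP\sim nP'$. By \cref{thm:wp} (the direction $m\ge3$), both $P$ and $P'$ are Weierstrass points. Their vanishing sequences are $s_{n,m}$ by \cref{prop:filter} in the separable form \cref{eq:snm}, applied with $i=1$ and $i=2$ respectively. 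The formula \cref{eq:snm} does not depend on which root is chosen, so both points have the same sequence. Finally, $n\le g+2$ is \cref{lem:dichotomy}(ii).

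\emph{Last sentence.} If $C$ is not hyperelliptic, the case $m=2$ is excluded, because in that case $C$ is hyperelliptic. Only the first case remains.

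There is no real obstacle here. The only point needing care is that \cref{eq:pair} presupposes two totally ramified finite branch points. This is guaranteed because separability makes every finite root totally ramified, and $m\ge3$ supplies at least two such roots. The point at infinity is never used, so it does not matter whether $n\mid m$.
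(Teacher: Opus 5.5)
Your proposal is correct and follows the paper's proof step for step. Two roots of $h$ give $P\ne P'$; \cref{thm:wp} makes them Weierstrass points; \cref{prop:filter} (in the form \cref{eq:snm}) gives the sequence $s_{n,m}$; \cref{eq:pair} gives $nP\sim nP'$; and \cref{lem:dichotomy} supplies $n\le g+2$ and the case $m=2$. Your extra remarks (each root is totally ramified since $l_j=1$, and \cref{eq:snm} does not depend on the root) only make explicit what the paper leaves implicit. Like the paper, you tacitly use $m\ge 2$, which holds because $m\le 1$ forces genus zero.
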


\begin{proof}
If $m\ge 3$, take $P,P'$ above two roots of $h$. They are Weierstrass points by \cref{thm:wp}, their vanishing sequence is $s_{n,m}$ by \cref{prop:filter}, $nP\sim nP'$ by \cref{eq:pair}, and $n\le g+2$ by \cref{lem:dichotomy}. If $m=2$, apply \cref{lem:dichotomy}.
\end{proof}

The Weierstrass property of the branch points is due to Lewittes \cite{lewittes1963} for $m\ge 5$, and  \cite[Corollary 5.3]{abdon2019} for all $m\ge 3$; \cite[Lemma 5.2]{abdon2019} is \cref{prop:small}. In \cite[Proposition 14]{shor} it is stated under the hypothesis $m>n$, which enters through $g\ge n$; by \cref{rem:lemma14} the inequality does not persist outside it. The proof above replaces $g\ge n$ by \cref{lem:dichotomy}, which holds for all $m\ge 2$, gives the equivalence with $m\ge 3$, and identifies $m=2$ as a genuine exception rather than a case excluded by hypothesis.

For $m=2$ the level satisfies $n\ge 2g+1$, so the non-gap $n$ supplied by \cref{eq:key} lies beyond the last gap and carries no information. The two degree two maps involved are distinct. On $y^n=x^2-1$ the map of degree two is $y$, since $x^2=y^n+1$, and the Weierstrass points of $C$ are the points where $x$ vanishes, together with the point at infinity when $n$ is odd. The branch points of the level $n$ cover are the two points where $y$ vanishes, and the fibre of $y$ over $0$ is unramified. So these points carry the generic vanishing sequence by \cref{thm:wp}, and the level is invisible to a search over Weierstrass points. It is recognised from the hyperelliptic branch locus in step 5 of \cref{alg:main}.
 
\begin{rem}\label{rem:normality}
Let $C$ admit a cyclic cover of degree three, with $g\ge 5$. Then the $g^1_3$ of $C$ is unique, since by the Castelnuovo--Severi inequality \cite{accola1994} two distinct $g^1_3$ force $g\le (3-1)^2=4$. Every automorphism of $C$ then preserves the pencil and normalises its Galois group, so $C_3\trianglelefteq\Aut(C)$. The normality hypothesis of \cite{sanjeewa2008} therefore holds without being assumed for $n=3$ and $g\ge 5$.
\end{rem}
 
\section{The superelliptic normal form}\label{sec:wnf}

Let $t: C\to\PP^1$ be a morphism of degree $n$ such that $k(C)/k(t)$ is Galois with cyclic Galois group. We construct a birational map from $C$ to a curve $v^n=h(u)$ with $h\in k[u]$ and $k(u)=k(t)$. The construction is Kummer theory followed by a reduction of exponents modulo $n$.

\subsection{The Kummer step}\label{sec:kummer}

Let $\tau$ generate $\Gal(k(C)/k(t))$. Since $k$ contains $\zeta_n$, the automorphism $\tau^*$ of the $k(t)$-vector space $k(C)$ is semisimple and
\begin{equation}\label{eq:eigen}
k(C)=\bigoplus_{j=0}^{n-1}W_j,\qquad W_j=\{\,w\in k(C): \tau^*w=\zeta_n^{\,j}w\,\},
\end{equation}
with $W_0=k(t)$.

\begin{lem}\label{lem:kummer}
Each $W_j$ is a one dimensional $k(t)$-vector space. If $s\in W_1\setminus\{0\}$, then $s^n\in k(t)$ and $k(C)=k(t)(s)$.
\end{lem}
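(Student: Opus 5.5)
The plan is to use standard Kummer theory in the cyclic setting, with $k(C)/k(t)$ Galois of degree $n$ and $\tau$ generating the Galois group.

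First, show that each $W_j$ is nonzero. For $w\in k(C)$, consider the twisted trace (Lagrange resolvent)
\[
R_j(w)=\sum_{i=0}^{n-1}\zeta_n^{-ij}(\tau^*)^i w .
\]
A direct computation gives $\tau^*R_j(w)=\zeta_n^{\,j}R_j(w)$, so $R_j(w)\in W_j$. By Dedekind's independence of characters, the automorphisms $1,\tau^*,\dots,(\tau^*)^{n-1}$ are linearly independent over $k(C)$. Hence for each $j$ there is some $w$ with $R_j(w)\ne 0$, and so $W_j\ne 0$.

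Second, show that $\dim_{k(t)}W_j=1$. Since $k(C)$ is a field, if $w,w'\in W_j$ are nonzero then $w'/w$ is fixed by $\tau^*$. It therefore lies in $k(C)^{\langle\tau\rangle}=k(t)$ by Galois theory, so $W_j=k(t)\,w$. As a consistency check, the decomposition \cref{eq:eigen} then gives total dimension $n=[k(C):k(t)]$.

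Third, take $s\in W_1\setminus\{0\}$. Then $\tau^*(s^n)=\zeta_n^{\,n}s^n=s^n$, so $s^n\in k(t)$. The powers $s^j$ are nonzero and lie in $W_j$ for $0\le j<n$. By the second step, $W_j=k(t)s^j$, so $k(C)=\bigoplus_j k(t)s^j\subseteq k(t)(s)$, which gives $k(C)=k(t)(s)$. Alternatively, the stabiliser of $s$ in $\langle\tau\rangle$ is trivial because $\zeta_n$ is primitive, so $k(t)(s)=k(C)$ by the Galois correspondence.

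The only step with any content is the nonvanishing of $W_1$, which rests on Dedekind's lemma, or equivalently on $\tau^*$ being diagonalizable with all $n$ eigenvalues occurring. I expect no real obstacle. The one point to keep in view is that $\zeta_n\in k$ and $\operatorname{char}k=0$, so $x^n-1$ is separable; this is what justifies the semisimplicity already asserted before \cref{eq:eigen}.
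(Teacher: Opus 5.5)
Your proposal is correct, and it reaches $\dim_{k(t)}W_j=1$ by a different route from the paper.

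The paper invokes the normal basis theorem. This identifies $k(C)$ with the regular module $k(t)[\langle\tau\rangle]$, and since $\zeta_n\in k(t)$ that module splits into $n$ one-dimensional character spaces. So the paper gets the direct sum \cref{eq:eigen} and the dimension count in one stroke, purely at the level of modules, without using that $k(C)$ is a field.

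You split the count into two inequalities:
\begin{itemize}
\item $W_j\neq 0$ follows from the Lagrange resolvents $R_j$ together with Dedekind's independence of characters.
\item $\dim W_j\le 1$ follows from the field structure: the ratio of two nonzero eigenvectors is $\tau^*$-invariant, hence lies in $k(t)$.
\end{itemize}
This is more elementary and self-contained, since Dedekind's lemma is essentially what underlies the normal basis theorem in the cyclic case. It also constructs elements of $W_1$ explicitly. The resolvents would give \cref{eq:eigen} directly as well, via $w=\tfrac1n\sum_j R_j(w)$, had you wanted not to rely on the semisimplicity asserted before the lemma. The paper's argument is shorter once the normal basis theorem is granted.

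Your final step is the same as the paper's: $s^n$ is invariant and the powers $s^j$ span. Your alternative, that the stabiliser of $s$ in $\langle\tau\rangle$ is trivial, avoids \cref{eq:eigen} altogether. Both arguments use only $\zeta_n$ in the base field and characteristic zero, consistent with the remark following the lemma.
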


\begin{proof}
By the normal basis theorem, $k(C)$ is isomorphic to the group algebra $k(t)[\langle\tau\rangle]$ as a module over it. Since $k(t)$ contains $\zeta_n$, the group algebra is the direct sum of the $n$ character spaces, each of dimension one over $k(t)$; hence $\dim_{k(t)}W_j=1$ for every $j$. Let $s\in W_1\setminus\{0\}$. Then $\tau^*(s^n)=s^n$, so $s^n$ lies in the fixed field $k(t)$. The powers $s^j$ lie in $W_j$ and are non-zero, so they span $\bigoplus_j W_j$ over $k(t)$.
\end{proof}

The proof uses only that the base field contains $\zeta_n$; the lemma therefore holds with $k$ replaced by any field of characteristic zero containing $\zeta_n$ over which $C$, $t$ and $\tau$ are defined.

Write $s^n=\tilde h(t)\in k(t)$ and factor
\begin{equation}\label{eq:htilde}
\tilde h(t)=c\prod_{j=1}^{r}(t-a_j)^{e_j},\qquad c\in k^\times,\ e_j\in\Z\setminus\{0\}.
\end{equation}
The exponents may be negative, so $\tilde h$ need not be a polynomial. The exponents are determined modulo $n$ by the local monodromy, as follows. For a point $R$ of $C$ over a place $a$ of $\PP^1$, of ramification index $e$, the stabiliser of $R$ in $\langle\tau\rangle$ has a distinguished generator, the element acting on a uniformiser $t_R$ by multiplication by $\zeta_n^{n/e}$ modulo $t_R^2$; writing it as $\tau^{l}$, we call $\tau^{l}$ the local monodromy at $a$.

\begin{lem}\label{lem:local}
Let $s\in W_1\setminus\{0\}$ with $s^n=\tilde h(t)$, let $a$ be a place of $\PP^1$ and let $\tau^{l}$ be the local monodromy at $a$. Then $\ord_a(\tilde h)\equiv l\pmod n$. In particular $\ord_a(\tilde h)\equiv 0\pmod n$ at every place that is not a branch point.
\end{lem}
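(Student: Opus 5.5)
The plan is a local computation at a point $R$ of $C$ over $a$. Write $e$ for the ramification index and $N=\ord_R(t_a)$, where $t_a$ is a uniformiser of $\PP^1$ at $a$ (say $t-a$, or $1/t$ if $a=\infty$). Then $\ord_R(\tilde h)=e\cdot\ord_a(\tilde h)$, and from $s^n=\tilde h$ we get
\[
n\,\ord_R(s)=e\,\ord_a(\tilde h).
\]
So $\ord_a(\tilde h)=(n/e)\,\ord_R(s)$. It therefore suffices to show $\ord_R(s)\equiv l'\pmod e$ for a suitable $l'$ with $(n/e)\,l'\equiv l\pmod n$.

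First I identify the stabiliser. Since the fibre over $a$ has $n/e$ points permuted transitively by $\langle\tau\rangle$, the stabiliser of $R$ is the subgroup of order $e$, namely $\langle\tau^{n/e}\rangle$. Its distinguished generator is therefore $\tau^{l}$ with $l=(n/e)\,l'$ for some $l'$ coprime to $e$. By definition, $\tau^{l}$ acts on a uniformiser $t_R$ by $\tau^{l*}t_R\equiv\zeta_n^{n/e}t_R\pmod{t_R^2}$. (I will fix the convention that $\tau^{l*}$ means the pullback, consistent with $\tau^*$ in \cref{eq:eigen}.)

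Next I evaluate the action on $s$. Write $s=u\,t_R^{\nu}$ with $\nu=\ord_R(s)$ and $u$ a unit at $R$. Since $\tau^l$ fixes $R$, we have $\tau^{l*}u\equiv u(R)\pmod{t_R}$. Hence
\[
\tau^{l*}s\equiv\zeta_n^{(n/e)\nu}\,s\pmod{t_R^{\nu+1}}.
\]
On the other hand $s\in W_1$ gives $\tau^{l*}s=\zeta_n^{l}s$. Comparing leading coefficients yields $\zeta_n^{l}=\zeta_n^{(n/e)\nu}$, that is $l\equiv(n/e)\nu\pmod n$. Combined with $\ord_a(\tilde h)=(n/e)\nu$ from the first step, this gives $\ord_a(\tilde h)\equiv l\pmod n$.

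At a non-branch place we have $e=1$, so the local monodromy is $\tau^0$ and the congruence reads $\ord_a(\tilde h)\equiv0$. Directly, $\ord_a(\tilde h)=n\,\ord_R(s)$ in that case.

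The only delicate step is the bookkeeping of conventions: whether the action on the uniformiser is by $\tau^*$ or by $(\tau^{-1})^*$, and which power of $\zeta_n$ the eigenvalue refers to. I will fix both consistently with \cref{eq:eigen}, so that the sign comes out as $+l$ rather than $-l$. The computation with $\infty$ is identical using $1/t$.
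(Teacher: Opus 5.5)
Your proposal is correct and follows the paper's proof essentially step for step. Both derive $n\,\ord_R(s)=e\,\ord_a(\tilde h)$ from $s^n=\tilde h(t)$, use that the distinguished generator of the stabiliser of $R$ multiplies the leading coefficient of any $f$ at $R$ by $\zeta_n^{(n/e)\ord_R(f)}$, and compare this with the eigenvalue $\zeta_n^{l}$ of $s$. Your intermediate reduction to $\ord_R(s)\equiv l'\pmod e$ is never actually used, since, as in the paper, you conclude directly from $l\equiv(n/e)\ord_R(s)\pmod n$.
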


\begin{proof}
Let $R$ be a point over $a$, $v=\ord_R$, $e$ the ramification index and $\sigma=\tau^{l}$. Let $z_a$ be a local parameter at $a$, so $z_a=t-a$ for $a$ finite and $z_\infty=1/t$; then $v(z_a)=e$, and $s^n=\tilde h(t)$ gives $v(s)=(e/n)\ord_a(\tilde h)$. The automorphism $\sigma$ fixes $R$, acts trivially on the residue field $k$, and multiplies a uniformiser by $\zeta_n^{n/e}$ modulo the maximal ideal, so $\sigma(f)/f\equiv\zeta_n^{(n/e)v(f)}$ modulo the maximal ideal for every $f\in k(C)^\times$. For $f=s$ the quotient is the constant $\sigma(s)/s=\zeta_n^{l}$, whence $l\equiv(n/e)v(s)=\ord_a(\tilde h)\pmod n$. If $a$ is not a branch point, the stabiliser is trivial, $l\equiv 0$, and the congruence follows.
\end{proof}

For the model \cref{eq:model} a uniformiser at a point over $p_j$ is $y^\alpha(x-p_j)^\beta$ with $\alpha l_j/d_j+\beta e_j=1$, and $\tau^{l_j}$ multiplies it by $\zeta_n^{\alpha l_j}=\zeta_n^{d_j}$, so the local monodromy at $p_j$ is $\tau^{l_j}$, in agreement with \cref{sec:branch}.

\subsection{Reduction of the exponents}\label{sec:reduce}

Replacing $s$ by $s\,r(t)$ with $r\in k(t)^\times$ replaces $\tilde h$ by $\tilde h\,r^n$ and leaves \cref{eq:eigen} unchanged. The exponents therefore matter only modulo $n$.

\begin{lem}\label{lem:reduce}
Let $\tilde h$ be as in \cref{eq:htilde} and put
\[
r(t)=\prod_{j=1}^{r}(t-a_j)^{-\lfloor e_j/n\rfloor},\qquad u=t,\qquad v=s\,r(t).
\]
Then $v^n=h(u)$ with
\begin{equation}\label{eq:hred}
h(u)=c\prod_{j=1}^{r}(u-a_j)^{\bar e_j},\qquad \bar e_j\equiv e_j\pmod n,\quad 0\le\bar e_j<n,
\end{equation}
so $h\in k[u]$, and $k(u,v)=k(C)$.
\end{lem}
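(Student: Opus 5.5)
The plan is a direct computation from the definitions. The only care needed is with the floor convention for negative exponents and with the fact that the operation does not change the field generated over $k(t)$.

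First, I write $e_j=n\lfloor e_j/n\rfloor+\bar e_j$. Here $\lfloor\cdot\rfloor$ is the mathematical floor, so that $\bar e_j=e_j-n\lfloor e_j/n\rfloor$ is the least non-negative residue of $e_j$ modulo $n$. It satisfies $0\le\bar e_j<n$ and $\bar e_j\equiv e_j\pmod n$, also when $e_j<0$.

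Next I compute
\[
v^n=s^n r(t)^n=c\prod_{j=1}^{r}(t-a_j)^{e_j}\prod_{j=1}^{r}(t-a_j)^{-n\lfloor e_j/n\rfloor}=c\prod_{j=1}^{r}(t-a_j)^{\bar e_j},
\]
and this is $h(u)$ with $u=t$. Since all $\bar e_j\ge 0$ and $c\in k^\times$, $h$ is a polynomial in $k[u]$. It is nonzero, because $c\ne0$. Factors with $\bar e_j=0$ simply drop out, so $h$ may be constant only if every $e_j\equiv0\pmod n$; that case is excluded below.

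For the equality of fields, note that $r\in k(t)^\times$. Hence $v=s\,r(t)$ and $s=v\,r(t)^{-1}$, which gives $k(t)(s)=k(t)(v)$. By \cref{lem:kummer}, $k(t)(s)=k(C)$. Since $u=t$, we get $k(u,v)=k(t)(v)=k(C)$. Alternatively, $r(t)$ is $\tau^*$-invariant, so $v\in W_1\setminus\{0\}$, and \cref{lem:kummer} applies to $v$ directly.

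There is no real obstacle here. The one point worth stating explicitly is the floor convention for negative $e_j$, because a truncating floor would give $\bar e_j<0$. The conclusion $k(u,v)=k(C)$ also shows that $[k(C):k(u)]=n$, so $v^n=h(u)$ is a model of $C$ rather than of a subcover, even though $h$ need not be separable.
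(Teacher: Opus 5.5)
Your proposal is correct and is essentially the paper's proof: the exponent of $t-a_j$ in $\tilde h\,r^n$ is $e_j-n\lfloor e_j/n\rfloor=\bar e_j\in\{0,\dots,n-1\}$, and $r\in k(t)^\times$ gives $k(u,v)=k(t)(s)=k(C)$ by \cref{lem:kummer}. Your explicit remark that the floor must be the mathematical floor when $e_j<0$ is a useful clarification that the paper leaves implicit; the aside about $h$ being constant is not needed for the statement, and that case cannot occur anyway, since $v\notin k$ when $k(u,v)=k(C)$ has degree $n\ge 2$ over $k(u)$.
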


\begin{proof}
By \cref{lem:kummer}, $v^n=\tilde h(t)\,r(t)^n$, and the exponent of $t-a_j$ in $\tilde h\,r^n$ is $e_j-n\lfloor e_j/n\rfloor=\bar e_j\in\{0,\dots,n-1\}$. Since $r(t)\in k(t)^\times$, we have $k(u,v)=k(t)(s)=k(C)$.
\end{proof}

\begin{rem}\label{rem:alt}
The substitution $u=1/(t-a)$, $v=s\,(t-a)^{\deg q}$, with $a$ a root of the denominator $q$ of $\tilde h$, does not produce a polynomial in general. For $\tilde h(t)=(7-t^{11})/t^2$ and $n=3$ it gives $v^3=7t^4-t^{15}$, which in $u=1/t$ is $7u^{-4}-u^{-15}$. \Cref{lem:reduce} gives $r(t)=t$ and $v^3=7u-u^{12}$.
\end{rem}

\subsection{When $h$ is separable}\label{sec:separable}

The branch data of $t$ was defined in \cref{sec:branch} for the model \cref{eq:model}; by \cref{lem:equiv} it is an invariant of the pair $(C,\langle\tau\rangle)$ up to the equivalence stated there.

\begin{theorem}\label{thm:sep}
Let $t: C\to\PP^1$ be cyclic of degree $n$ with branch data $(l_1,\dots,l_r)$, the point at infinity included when it is a branch point. Then there is a birational map from $C$ to a curve $v^n=h(u)$ with $k(u)=k(t)$ and $h\in k[u]$ separable if and only if there is $c\in(\Z/n\Z)^\times$ such that
\begin{equation}\label{eq:sepcond}
c\,l_j\equiv 1\pmod n\quad\text{for every $j$ with at most one exception.}
\end{equation}
\end{theorem}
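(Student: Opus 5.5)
We prove both directions by working with the Kummer generator of \cref{lem:kummer} and the exponent reduction of \cref{lem:reduce}, keeping track of the local monodromies through \cref{lem:local}.

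\emph{Necessity.} Suppose $v^n=h(u)$ with $h$ separable of degree $m$ and $k(u)=k(t)$. Then $u=\mu(t)$ for a M\"obius transformation $\mu$. The function $v$ lies in some eigenspace $W_{j_0}$ of $\tau$. Because $k(C)=k(u,v)$ has degree $n$ over $k(u)$, the element $v$ generates the extension, so $\gcd(j_0,n)=1$ and $v$ lies in $W_1$ for the generator $\tau^{c'}$ with $c'j_0\equiv1$. Apply \cref{lem:local} to $s=v$ and this generator. The local monodromy at each root of $h$ is $1$, because $\ord=1$ there. At $u=\infty$ it is $-m$. Every other place is unbranched. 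In terms of the original generator $\tau$ the monodromies are the $l_j$, and changing the generator multiplies all of them by one unit $c$, as in \cref{lem:equiv}. Hence $c\,l_j\equiv1$ at every branch point except possibly $\infty_u$, which is the one exception in \cref{eq:sepcond}.

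\emph{Sufficiency.} Suppose $c$ satisfies \cref{eq:sepcond}. Replace $\tau$ by $\tau^{c^{-1}}$ (read in the appropriate direction), so that every local monodromy $l_j$ equals $1$ with at most one exception $l_{j_0}$. Choose a M\"obius transformation $\mu$ sending the exceptional branch point $a_{j_0}$ to $\infty$. If there is no exception, choose $\mu$ so that $\infty$ is not a branch point. Set $u=\mu(t)$, take $s\in W_1\setminus\{0\}$ by \cref{lem:kummer}, and apply \cref{lem:reduce} in the coordinate $u$. By \cref{lem:local} the finite exponents satisfy $\bar e_j\equiv l_j\equiv1\pmod n$, so $\bar e_j=1$ at the finite branch points. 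At unbranched finite places the exponent is $\equiv0$, so $\bar e_j=0$. Hence $h(u)=c_0\prod(u-a_j)$ is separable, and $k(u,v)=k(C)$ with $k(u)=k(t)$.

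\emph{Main obstacle.} The delicate step is the bookkeeping of the generator change in the necessity direction. A separable model need not put $v$ in $W_1$ for the given $\tau$, so one must verify that the unit relating $j_0$ to the branch-data convention is the same at every place. It is, because $\tau^{c'}$ multiplies the local monodromies uniformly, which is exactly the content of \cref{lem:equiv}. A second subtlety is the case $n\mid m$, where $\infty$ is unbranched and there is no exception. This fits the statement, since "at most one" allows zero exceptions. The same reasoning applies when $\mu$ moves a non-exceptional point to $\infty$: since we choose $\mu$ ourselves, we never need to handle that case.
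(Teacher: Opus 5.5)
Your proof is correct and follows the paper's argument. For necessity you read the monodromies $1$ at the roots of $h$ and $-m$ at $u=\infty$ off \cref{lem:local} for the generator $(u,v)\mapsto(u,\zeta_n v)$, then transfer them to the given generator by one unit; for sufficiency you rescale the generator, move the exceptional branch point to infinity, and apply \cref{lem:reduce} with \cref{lem:local}. Your explicit check that $v$ lies in an eigenspace $W_{j_0}$ with $\gcd(j_0,n)=1$ just spells out the step the paper delegates to \cref{lem:equiv}.
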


\begin{proof}
Suppose $v^n=h(u)$ with $h$ separable of degree $m$ and $k(u)=k(t)$. For the generator $\tau: (u,v)\mapsto(u,\zeta_n v)$ the local monodromy at each root of $h$ is $1$, and the only other possible branch point is $u=\infty$, with local monodromy $-m$. So the branch data is $(1,\dots,1,-m)$ for this generator, and \cref{eq:sepcond} holds with $c=1$ if $\tau$ is the given generator, and with the unit relating the two generators otherwise, by \cref{lem:equiv}. Conversely suppose \cref{eq:sepcond} holds for $c$. Replacing $\tau$ by a suitable power multiplies all $l_j$ by $c$, so we may assume $l_j=1$ for all $j$ except possibly one index $j_0$. Apply a M\"obius transformation of $u=t$ carrying $p_{j_0}$ to infinity, or carrying a non-branch point to infinity if there is no exception. By \cref{lem:local} the exponent of $\tilde h=s^n$ at every finite branch point is congruent to $1$ modulo $n$ and its exponent at every other finite place is divisible by $n$, so \cref{lem:reduce} gives $\bar e_j=1$ at every finite branch point and $\bar e_j=0$ elsewhere, and $h$ has simple roots.
\end{proof}

\begin{exa}\label{ex:nonsep}
Let $C: y^4=x^2(x^3-1)$, of genus four by \cref{eq:RH}. The projection $x$ is cyclic of degree four with branch data
\[
(l_0,l_1,l_{\zeta_3},l_{\zeta_3^2},l_\infty)=(2,1,1,1,3).
\]
The units modulo four are $1$ and $3$; for $c=1$ two entries differ from $1$, and for $c=3$ the data is $(2,3,3,3,1)$ with four. So \cref{eq:sepcond} fails, and the projection $x$ admits no model $v^4=h(u)$ with $h$ separable, so this cover realises no separable level. By \cref{def:superelliptic} the curve is nevertheless superelliptic, of level six. \Cref{alg:main} returns
\[
v^6=u(u^2+1),
\]
so $C\cong\{y^6=x^3-x\}$ and $[C]\in\cS_{4,6}$. Indeed $\sigma(x,y)=(-x,\zeta_{12}y)$ is an automorphism of $y^6=x^3-x$ of order twelve; $\sigma^3$ has order four, fixes $(0,0)$ and the three points over $x=\infty$, and has one further orbit of size two with stabiliser of order two, so by \cref{eq:RH} its quotient has genus zero, with branch data $(2,1,1,1,3)$ up to \cref{lem:equiv}.
\end{exa}

\section{Deciding superellipticity}\label{sec:alg}

The input is an absolutely irreducible polynomial $F\in k_0[x,y]$ defining a plane model of $C$ over a subfield $k_0\subseteq k$, with $C$ the smooth projective model of the curve $F=0$; in practice $k_0$ is a number field. No smoothness of the plane model is assumed. The question is geometric: whether $[C]\in\cS_{g,n}$, that is whether $C_{\bar k_0}$ admits a model $y^n=h(x)$ with $h$ separable. This section answers it over an explicit finite extension of $k_0$ generated by the relevant Weierstrass points and roots of unity. \Cref{sec:arith} treats the question of what can be done over $k_0$ itself.

Every object used is a Riemann--Roch space of a divisor rational over the field of computation, computed by the algorithms of Hess \cite{hess2002}, as in van Hoeij \cite{vanhoeij1995,vanhoeij2002}. Points of $C$ enter through Galois orbits over $k_0$, which we call closed points; the residue field of a closed point is the field generated by any of its points.

Throughout this section $\pi:C\to\PP^1$ is a cyclic cover of degree $n$ with a separable model $y^n=h(x)$, $\deg h=m\ge 3$. We write
\[
\begin{split}
S &= \text{the divisor of the $m$ finite branch points},\\
d &= \gcd(n,m),\qquad e=n/d,\\
\bar Q &= \text{the reduced fibre of $x$ over $\infty$, of degree $d$},\qquad F_\infty=e\bar Q=x^*(\infty).
\end{split}
\]
%

\subsection{Weierstrass points and candidates}\label{sec:prelim}

The genus, an integral basis and a basis $\omega_1,\dots,\omega_g$ of $H^0(C,\Omega^1)$ are computed over $k_0$  in \cite{hess2002}. Let $\pi_0$ be a separating variable, $\omega_i=f_i\,d\pi_0$, and let $W=\det\bigl(D^{(j)}f_i\bigr)_{0\le j<g}$ be the Wronskian with respect to the Hasse derivatives $D^{(j)}=(1/j!)\,d^j/d\pi_0^j$. By \cite{towse2000} the divisor
\begin{equation}\label{eq:wdiv}
E=\Div(W)+\tfrac{g(g+1)}{2}\Div(d\pi_0)
\end{equation}
is effective, its support is the set of Weierstrass points, and its multiplicity at $P$ is the weight
\[
w(P)=\sum_{i}(s_i-i)
\]
of the vanishing sequence $(s_i)$ at $P$; in characteristic zero $\deg E=g^3-g$. Since $W\in k_0(C)$, the divisor $E$ is $k_0$-rational, and its closed points lie above the places of $k_0(\pi_0)$ occurring in the norm $N(W)=N_{k_0(C)/k_0(\pi_0)}(W)$ or in $\Div(d\pi_0)$. The height of $W$ is not bounded in terms of $g$, so $W$ is used through its norm wherever possible. The next proposition supplies the part of $E$ above a place of $k_0(\pi_0)$ from that norm, and a criterion discarding a place without computing the points above it. Let $w_{\min}$ be the least weight $w(s_{n,m})$ over the pairs $(n,m)$ with $2\le n\le g+2$ satisfying \cref{eq:genus} for the given $g$. These are the pairs with $m\ge 3$, so $w_{\min}\ge 1$ by \cref{thm:wp}, and they are the pairs entering steps 2 to 4 of \cref{alg:main}; the excluded pairs $(2g+1,2)$ and $(2g+2,2)$ have generic sequence, of weight zero, and their levels are decided in step 5.

\begin{prop}\label{prop:wnorm}
Let $q$ be a place of $k_0(\pi_0)$, let $e_q=\ord_q\bigl(N(W)\bigr)$, and let $D_q$ be the degree of the part of $\Div(d\pi_0)$ above $q$. Then
\[
e_q = \sum_{P\mid q}f(P\mid q)\,v_P(W),
\]
and 
\[
\sum_{P\mid q}\deg P\cdot w(P) = e_q\deg q+\tfrac{g(g+1)}{2}D_q.
\]
If
\[
e_q\deg q+\tfrac{g(g+1)}{2}D_q<w_{\min}\deg q,
\]
 then no point of $E$ above $q$ has weight $w(s_{n,m})$ for a pair $(n,m)$ with $n\le g+2$ satisfying \cref{eq:genus}.
\end{prop}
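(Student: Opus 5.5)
The argument has three parts, each a standard fact about norms and degrees of places, applied to the divisor $E$ of \cref{eq:wdiv}.

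\emph{The norm formula.} For a finite separable extension $k_0(C)/k_0(\pi_0)$ and a place $q$ of the base, the valuation of a norm is
\[
\ord_q\bigl(N(f)\bigr)=\sum_{P\mid q}f(P\mid q)\,v_P(f),
\]
where the sum runs over the places $P$ of $k_0(C)$ above $q$ and $f(P\mid q)$ is the residue degree. I will cite this from the theory of Dedekind extensions. One way to see it is through the product of local norms over the completions at the places above $q$. Applying it to $f=W$ gives the first identity. Here the $P$ are closed points over $k_0$, and $v_P(W)$ equals $\ord_R(W)$ for any geometric point $R$ in the orbit, since the extension is unramified over the constant field in characteristic zero.

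\emph{The weight identity.} I will restrict $E$ to the closed points above $q$ and take degrees. By \cref{eq:wdiv} the multiplicity of $E$ at $P$ is $v_P(W)+\tfrac{g(g+1)}{2}v_P(d\pi_0)$. Towse's result identifies this with $w(P)$, the weight of any geometric point in the Galois orbit; weights are Galois invariant. Hence
\[
\sum_{P\mid q}\deg P\cdot w(P)=\sum_{P\mid q}\deg P\,v_P(W)+\tfrac{g(g+1)}{2}\sum_{P\mid q}\deg P\,v_P(d\pi_0).
\]
The second sum is $D_q$ by definition. For the first I use $\deg P=f(P\mid q)\deg q$, which is the multiplicativity of residue degrees over $k_0$. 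The first sum is then $\deg q\cdot\sum_P f(P\mid q)v_P(W)=e_q\deg q$.

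\emph{The discarding criterion.} Each $w(P)\ge 0$, since $E$ is effective. Suppose some point above $q$ had weight $w(s_{n,m})$ for an admissible pair. By the definition of $w_{\min}$ that weight is at least $w_{\min}$. Its closed point $P$ has $\deg P\ge\deg q$, because its residue field contains that of $q$. So the left side of the identity would be at least $w_{\min}\deg q$, contradicting the hypothesis.

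The main obstacle is the bookkeeping in the weight identity. The multiplicity of $E$ at a closed point must be read as the common geometric weight of its points, and $\deg P$ must equal $f(P\mid q)\deg q$ exactly. I would also check the case $q=\infty$ and the places where $d\pi_0$ has poles, where $v_P(W)$ may be negative. The identity still holds there because the weight is the full multiplicity of $E$, not $v_P(W)$ alone.
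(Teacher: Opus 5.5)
Your proposal is correct and follows the paper's proof step for step. The paper obtains the first identity from the factorisation of the norm into local norms, the weight identity from $\deg P=f(P\mid q)\deg q$ together with $w(P)=v_P(W)+\tfrac{g(g+1)}{2}v_P(d\pi_0)$, and the criterion from the effectivity of $E$ and $\deg P\cdot w(P)\ge w_{\min}\deg q$; your added remarks on Galois invariance of weights and on places where $v_P(W)<0$ are sound and only make explicit what the paper leaves implicit.
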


\begin{proof}
The first identity is the factorisation of the norm into local norms. For the second, $\deg P=f(P\mid q)\deg q$ for every $P\mid q$, and $w(P)=v_P(W)+\tfrac{g(g+1)}{2}v_P(d\pi_0)$ by \cref{eq:wdiv}; summing $\deg P\cdot w(P)$ over $P\mid q$ gives $\deg q\sum_{P\mid q}f(P\mid q)v_P(W)+\tfrac{g(g+1)}{2}\sum_{P\mid q}\deg P\cdot v_P(d\pi_0)$, which is the right hand side. For the last statement, $E$ is effective, so every term of the sum is non-negative, and a point $P\mid q$ of weight $w(s_{n,m})\ge w_{\min}$ contributes $\deg P\cdot w(P)\ge w_{\min}\deg q$.
\end{proof}

The weight of a closed point of degree one is computed from its vanishing sequence, through the dimensions $\ell(K-iP)$, without reference to $W$. When exactly one point of degree greater than one lies above $q$, the first identity of \cref{prop:wnorm} determines $v_P(W)$ at that point, hence its weight, after the points of degree one above $q$ have been subtracted; only when two or more such points share a place is a valuation of $W$ computed directly. A closed point entering a candidate branch divisor has degree at most $m$, and $m\le 2g+2$ by \cref{eq:genus}, since $2g\ge m-2$ for $n\ge3$ and $m=2g+\gcd(2,m)$ for $n=2$; places of larger degree are discarded as well.

For $2\le n\le g+2$ let $\Sigma_n(g)$ be the finite set of sequences \cref{eq:sn} at the totally ramified points of the branch data of level $n$ and genus $g$, and $w(\Sigma_n(g))$ the set of their weights. A point whose weight is not in $w(\Sigma_n(g))$ is not a totally ramified branch point of a level $n$ cover; this broader filter serves the search for non-separable levels of \cref{rem:incomplete}, while the candidates \cref{eq:cand} of \cref{alg:main} use only the weight $w(s_{n,m})$ below. For a level with a separable model the sequence at a branch point is $s_{n,m}$ of \cref{eq:snm}, and by \cref{cor:finite} all $m$ finite branch points are Weierstrass points. The candidate branch divisors of level $n$ are therefore
\begin{equation}\label{eq:cand}
\begin{split}
\cC_n=\Bigl\{\,S=P_1+\dots+P_m:\ & P_i \text{ distinct Weierstrass points of } C_{\bar k_0},\ w(P_i)=w(s_{n,m}),\\
& (n,m) \text{ satisfies \cref{eq:genus}}\Bigr\}.
\end{split}
\end{equation}
This is a finite set. Let $k_n$ be the compositum of $k_0(\zeta_n)$ with the splitting fields over $k_0$ of the closed points of $E$ of weight $w(s_{n,m})$, $(n,m)$ satisfying \cref{eq:genus}; over $k_n$ every Weierstrass point of these weights is rational. Every $S\in\cC_n$ is a $k_n$-rational divisor whose points are $k_n$-rational.

\begin{exa}\label{ex:filter}
Let $g=3$. The branch data with a totally ramified point, the sequences \cref{eq:sn} at those points, and their weights are the following; for $n=5$ formula \cref{eq:RH} has no solution.
\[
\begin{array}{cllll}
n & \text{data} & \text{point} & \text{sequence} & w\\
\hline
3 & (1,1,1,1,2) & l_j=1 & s_{3,4}=(0,1,3) & 1\\
  &             & l_j=2 & (0,1,4) & 2\\
4 & (1,1,1,1)   & \text{all} & s_{4,4}=s_{4,3}=(0,1,4) & 2\\
4 & (1,1,3,3)   & \text{all} & (0,1,2) & 0\\
4 & (1,1,2,2,2) & l_j=1 & (0,2,4) & 3
\end{array}
\]
So $\Sigma_3(3)=\{(0,1,3),(0,1,4)\}$ and $\Sigma_4(3)=\{(0,1,4),(0,1,2),(0,2,4)\}$. Step 2 keeps the closed points of $E$ of multiplicity $w(s_{3,4})=1$ for $n=3$ and of multiplicity $w(s_{4,4})=w(s_{4,3})=2$ for $n=4$; the remaining weights $2$, $0$ and $3$ of the table belong to non-separable data and enter only the search of \cref{rem:incomplete}. The generic sequence has weight zero and is carried by no closed point of $E$. The data $(1,1,3,3)$ is that of \cref{rem:incomplete}. The data $(1,1,2,2,2)$ gives a hyperelliptic curve, since $\tau^2$ has quotient of genus zero, and its totally ramified points are hyperelliptic Weierstrass points; this level has $n=g+1$ and non-separable data, so it is covered neither by \cref{thm:complete} nor by \cref{rem:incomplete}.
\end{exa}

\subsection{The eigenspaces from the branch divisor}\label{sec:eigen}

Let $\tau(x,y)=(x,\zeta_ny)$ and, for $0\le b'\le n-1$,
\[
V_{b'}=\{\,\omega\in H^0(C,\Omega^1):\ \tau^*\omega=\zeta_n^{-(n-1-b')}\omega\,\}.
\]
By \cref{prop:basis}, $V_{b'}$ has basis $x^a\,dx/y^{\,n-1-b'}$ with $an+b'm\le 2g-2$. Put
\begin{equation}\label{eq:bmaxA}
b_{\max}=\Bigl\lfloor\frac{2g-2}{m}\Bigr\rfloor,\qquad A=\Bigl\lfloor\frac{2g-2-b_{\max}m}{n}\Bigr\rfloor,
\end{equation}
so that $\dim V_{b_{\max}}=A+1$ and $V_{b'}=0$ for $b'>b_{\max}$.

\begin{prop}\label{prop:eigen}
For $0\le i\le b_{\max}$,
\[
H^0\bigl(C,\Omega^1(-iS)\bigr)=\bigoplus_{b'\ge i}V_{b'}.
\]
In particular, for $b_{\max}\ge 1$, $V_{b_{\max}}$ and $V_{b_{\max}-1}\oplus V_{b_{\max}}$ are the Riemann--Roch spaces of the divisors $K-b_{\max}S$ and $K-(b_{\max}-1)S$, over any field over which $S$ is rational.
\end{prop}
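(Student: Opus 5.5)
The plan is to compute $H^0(C,\Omega^1(-iS))$ inside the explicit basis of \cref{prop:basis}. Every $\tau$-stable subspace of $H^0(C,\Omega^1)$ is the direct sum of its intersections with the eigenspaces $V_{b'}$. The divisor $S$ is $\tau$-invariant, since each point of $S$ is fixed by $\tau$, so $H^0(C,\Omega^1(-iS))$ is $\tau$-stable. Hence it suffices to determine, for each $b'$, which elements of $V_{b'}$ vanish to order at least $i$ at every point $P_j$ of $S$.

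For this I use the order computation in the proof of \cref{prop:basis} with all $l_j=1$, $d_j=1$, $e_j=n$. At the point $P_j$ over the root $p_j$ of $h$, the differential $dx/y^{b}$ has order $n-1-b$, because $\lfloor b/n\rfloor=0$. With $b=n-1-b'$ this order is $b'$. An element of $V_{b'}$ has the form $\varphi(x)\,dx/y^{n-1-b'}$ with $\deg\varphi\le (2g-2-b'm)/n$, so its order at $P_j$ is $n\,\ord_{p_j}(\varphi)+b'$.

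For $b'\ge i$ every element of $V_{b'}$ therefore vanishes to order at least $i$ on all of $S$, so $V_{b'}\subseteq H^0(\Omega^1(-iS))$. For $b'<i$ and $i\le b_{\max}$, I need a nonzero $\varphi$ with $n\,\ord_{p_j}\varphi+b'\ge i$ at all $m$ roots. Since $i-b'\le b_{\max}<n$, this requires $\varphi$ to vanish at all $m$ roots, so $h\mid\varphi$ and $\deg\varphi\ge m$.

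The main point, and the only real obstacle, is to show that this degree is impossible: $m>(2g-2-b'm)/n$, that is, $mn+b'm>2g-2$. From \cref{eq:genus}, $2g-2=(n-1)(m-1)-\gcd(n,m)-1<nm$, so even $b'=0$ gives a contradiction. It remains to check the claim $b_{\max}<n$, which is needed for $i-b'<n$. This follows from $b_{\max}m\le 2g-2<nm$.

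The final sentence is then immediate. Taking $i=b_{\max}$ and $i=b_{\max}-1$ gives the two spaces, and $V_{b'}=0$ for $b'>b_{\max}$. Identifying $H^0(\Omega^1(-D))$ with $L(K-D)$ via $\omega\mapsto\omega/\omega_0$ for a rational differential $\omega_0$ with $\Div\omega_0=K$ gives the Riemann--Roch description. That description is valid over any field over which $S$ and a canonical divisor are rational, and a canonical divisor is rational over $k_0$.
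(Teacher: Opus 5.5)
Your proof is correct and essentially the paper's. The core step is identical: an eigencomponent $\varphi(x)\,dx/y^{n-1-b'}$ with $b'<i$ that vanishes to order at least $i$ on $S$ forces $h\mid\varphi$, which is impossible since $\deg\varphi\le(2g-2)/n<m$ by \cref{eq:genus}. The only difference is the reduction to eigencomponents: you use the $\tau$-stability of $H^0(C,\Omega^1(-iS))$, whereas the paper uses that the orders of the summands at a branch point are pairwise incongruent modulo $n$. (Your check $b_{\max}<n$ is harmless but unneeded, since $n\,\ord_{p_j}\varphi\ge i-b'\ge 1$ already gives $\ord_{p_j}\varphi\ge 1$.)
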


\begin{proof}
For $\varphi\in k[x]$ with $\deg\varphi\le(2g-2-b'm)/n$ the differential $\varphi(x)\,dx/y^{n-1-b'}$ has order $n\,\ord_{p_j}(\varphi)+b'$ at the point above $p_j$, by the proof of \cref{prop:basis}. Let $\omega=\sum_{b'}\varphi_{b'}(x)\,dx/y^{n-1-b'}\in H^0(C,\Omega^1)$. The orders of the summands at a fixed branch point are pairwise incongruent modulo $n$, so the order of $\omega$ there is their minimum. Hence $\omega$ vanishes to order at least $i$ at every branch point if and only if $h\mid\varphi_{b'}$ for every $b'<i$. Since $\deg\varphi_{b'}\le(2g-2)/n<m$ by \cref{eq:genus}, this forces $\varphi_{b'}=0$.
\end{proof}

The proposition gives a necessary condition on a candidate pair $(S,n)$,
\begin{equation}\label{eq:dimtest}
h^0(K-iS)=\sum_{b'=i}^{b_{\max}}\Bigl(\Bigl\lfloor\frac{2g-2-b'm}{n}\Bigr\rfloor+1\Bigr),\qquad 0\le i\le b_{\max},
\end{equation}
which we call the dimension test. It also gives the divisors of the two distinguished differentials, of which $\omega_0$ is defined for every $b_{\max}\ge 0$ and $\omega_1$ only for $b_{\max}\ge 1$,
\begin{equation}\label{eq:omega01}
\omega_0=\frac{dx}{y^{\,n-1-b_{\max}}},\qquad \omega_1=\frac{dx}{y^{\,n-b_{\max}}}=\frac{\omega_0}{y}.
\end{equation}
From $\Div(y)=S-\tfrac md\bar Q$ and $\Div(dx)=(n-1)S-(e+1)\bar Q$,
\begin{equation}\label{eq:divomega}
\begin{split}
\Div(\omega_0)&=b_{\max}S+c\,\bar Q,\\
\Div(\omega_1)&=(b_{\max}-1)S+\bigl(c+\tfrac md\bigr)\bar Q,
\end{split}
\qquad c=\frac{(n-1-b_{\max})m}{d}-e-1.
\end{equation}
Here $c\ge 0$ is an integer, and $c=0$ if and only if $m'\mid n'+1$, where $m=dm'$, $n=dn'$; this uses $n-1-b_{\max}=\lceil(n+d)/m\rceil$, which follows from $2g-2=(n-1)m-n-d$. If $b_{\max}=0$, then $V_0=H^0(C,\Omega^1)$, so $A=g-1\ge 1$ and only case (a) of \cref{thm:quotient} below occurs; $\omega_1$ is not used.


\begin{lem}\label{lem:derivative}
Let $D$ be one of the divisors $F_\infty$ and $nQ$, $Q$ a point of $S$, and put $S'=S$, respectively $S'=S-Q$. Then
\[
\{\,f\in L(D):\ \ord_P(df)\ge n-1 \text{ for every } P\in S'\,\}=
\begin{cases}
\langle 1,\ x\rangle, & D=F_\infty,\\
\langle 1,\ 1/(x-x(Q))\rangle, & D=nQ.
\end{cases}
\]
\end{lem}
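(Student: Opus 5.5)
The plan is to decompose every $f\in L(D)$ into $\tau$-eigencomponents and show that the derivative condition kills every component except the invariant one. Both divisors $F_\infty=x^*(\infty)$ and $nQ$ are $\tau$-invariant, since $Q$ is totally ramified and hence fixed by $\tau$. So $L(D)$ is $\tau$-stable and splits as a sum of eigenspaces. By \cref{lem:kummer} each eigenspace consists of functions $g_b=\varphi_b(x)\,y^b$ with $0\le b<n$ and $\varphi_b\in k(x)$, and each $g_b$ again lies in $L(D)$.

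The first step is the pointwise valuation bookkeeping, as in the proof of \cref{prop:basis}. At the point $P_j$ above a root $p_j$ we have $\ord(x-p_j)=n$ and $\ord(y)=1$, so
\[
\ord_{P_j}(g_b)=n\,\ord_{p_j}(\varphi_b)+b .
\]
At a finite non-branch point $y$ is a unit. At $\bar Q$ we have $\ord(x)=-e$ and $\ord(y)=-m/d$.

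The second step determines the shape of $\varphi_b$ from membership in $L(D)$.
\begin{enumerate}[(a)]
\item For $D=F_\infty$, regularity at finite points forces $\ord_{p}(\varphi_b)\ge -b/n>-1$ everywhere, so $\varphi_b\in k[x]$.
\item For $D=nQ$ with $q=x(Q)$, the same argument gives polynomiality away from $q$. At $q$ we need $n\,\ord_q(\varphi_b)+b\ge -n$, so $\varphi_b=\psi_b/(x-q)$ with $\psi_b\in k[x]$.
\end{enumerate}

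The third step, which is the key one, uses the derivative condition. The orders at $P_j$ of the nonconstant summands $g_b$ are pairwise incongruent modulo $n$, and differentiation lowers a nonzero order by exactly one (characteristic zero), so the orders of the $dg_b$ are also pairwise distinct. Hence $\ord_{P_j}(df)=\min_b\ord_{P_j}(dg_b)$, and the condition holds for $f$ if and only if it holds for each component.
\begin{enumerate}[(a)]
\item For $b=0$, $d\varphi_0=\varphi_0'(x)\,dx$ has order at least $n-1$ automatically, because $\ord_{P_j}(dx)=n-1$.
\item For $b\ge1$, $\ord_{P_j}(g_b)\equiv b\not\equiv0$, so $\ord(dg_b)=\ord(g_b)-1$. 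The condition $\ord(dg_b)\ge n-1$ then means $\ord_{p_j}(\varphi_b)\ge1$.
\end{enumerate}
Thus for $b\ge1$ and $g_b\ne0$, the product $\prod_{P_j\in S'}(x-p_j)$ divides $\varphi_b$ (respectively $\psi_b$). This is $h$ in the first case and $h/(x-q)$, of degree $m-1$, in the second.

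The last step is the pole count over $\infty$, where the hypothesis $m\ge3$ enters.
\begin{enumerate}[(a)]
\item For $D=F_\infty$, membership gives $-e\deg\varphi_b-bm/d\ge -e$, that is $n\deg\varphi_b+bm\le n$. For $b\ge1$ and $\deg\varphi_b\ge m$ this would give $nm+m\le n$, which is impossible. So $\varphi_b=0$ for $b\ge1$, while $\deg\varphi_0\le1$, giving $\langle1,x\rangle$.
\item For $D=nQ$, regularity over $\infty$ gives $n(\deg\psi_b-1)+bm\le0$. For $b\ge1$ and $\deg\psi_b\ge m-1$ this gives $n(m-2)+m\le0$, which is impossible since $m\ge3$. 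For $b=0$ we get $\deg\psi_0\le1$, so $\varphi_0\in\langle1,\,1/(x-q)\rangle$.
\end{enumerate}

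The reverse inclusions are immediate: $x$ and $1/(x-q)$ lie in the respective spaces by \cref{eq:div}, and their differentials are multiples of $dx$, which has order $n-1$ at every point of $S$.

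I expect the only delicate point to be the third step, namely justifying that the order of $df$ is the minimum over components, including the case where some $g_b$ is constant or has order divisible by $n$. This is handled by separating the $b=0$ term, which is treated directly, from the $b\ge1$ terms, whose orders are nonzero modulo $n$.
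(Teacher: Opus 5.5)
Your proof is correct and follows essentially the same route as the paper. Both write $f=\sum_b\varphi_b(x)y^b$, use that the orders of the $d(\varphi_b y^b)$ at points of $S'$ are pairwise incongruent modulo $n$ to force $\varphi_b$ to vanish at the corresponding roots of $h$ for $b\ge1$, and then use the pole conditions to kill those components and to bound $\varphi_0$. You are more explicit than the paper in three places: you justify that each eigencomponent lies in $L(D)$ via the $\tau$-invariance of $D$, you treat the $b=0$ term separately, and you check the reverse inclusion. Your degree count in fact only needs $m\ge2$.
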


\begin{proof}
Write $f=\sum_{j=0}^{n-1}f_j(x)y^j$ with $f_j\in k(x)$. At a point $P$ above a root $p$ of $h$ we have $\ord_P(y)=1$, $\ord_P(dx)=n-1$ and $\ord_P(dy)=0$, so
\[
\ord_P\,d(f_jy^j)=j-1+n\,\ord_p(f_j)\quad(j\ge 1),\qquad \ord_P\,d(f_0)\ge n-1.
\]
These orders are pairwise incongruent modulo $n$, so $\ord_P(df)\ge n-1$ forces $\ord_p(f_j)\ge 1$ for every $j\ge 1$ and every $p$ with $P\in S'$. For $D=F_\infty$ the pole bound gives $\deg f_j\le 1-jm/n<1$, so $f_j$ is a constant for $j\ge1$, hence zero, and $f_0$ has degree at most one. For $D=nQ$, $x(Q)=q$, the pole bound at $Q$ and regularity elsewhere give $f_j=c_j/(x-q)$ or $f_j\in k$ for $j\ge1$, and $f_j$ vanishes at the $m-1\ge 2$ roots of $h$ other than $q$; so $f_j=0$, and $f_0\in\langle 1,1/(x-q)\rangle$.
\end{proof}

The condition of the lemma is linear over every field over which $D$ and $S'$ are rational, which is $k_n$ in this section and $k_0$, or the stated residue field, in \cref{sec:arith}: for a basis $f_1,\dots,f_r$ of $L(D)$ one asks for which $(c_i)$
\[
\sum_i c_i\,df_i\in H^0\bigl(\Omega^1(-(n-1)S'+2D)\bigr),
\]
a Riemann--Roch space of a divisor rational over that field.

\begin{theorem}\label{thm:quotient-geom}
For $S\in\cC_n$ and a point $Q$ of $S$, the subspace
\[
U(S,Q)=\{\,f\in L(nQ):\ \ord_P(df)\ge n-1 \text{ for every } P\in S-Q\,\}
\]
is defined over $k_n$. If $S$ is the divisor of the finite branch points of $\pi$, then $U(S,Q)$ has dimension two and the ratio of a basis is a M\"obius transform of the quotient function $x$.
\end{theorem}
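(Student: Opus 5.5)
The statement has two parts: rationality of $U(S,Q)$ over $k_n$, and its structure when $S$ is the true branch divisor. The second part is essentially \cref{lem:derivative} with $D=nQ$. I would prove the two parts separately.

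For rationality, observe that every $S\in\cC_n$ is a sum of $k_n$-rational points. So $Q$, $S-Q$, $nQ$ and the divisor $-(n-1)(S-Q)+2nQ$ are $k_n$-rational divisors. The defining condition of $U(S,Q)$ is linear in $f$. Following the remark after \cref{lem:derivative}, pick a $k_n$-basis $f_1,\dots,f_r$ of $L(nQ)$, which exists by Hess's Riemann--Roch algorithm over $k_n$. Then $U(S,Q)$ is the set of $\sum c_i f_i$ with
\[
\sum_i c_i\,df_i\in H^0\bigl(\Omega^1(-(n-1)(S-Q)+2nQ)\bigr).
\]
Here one checks that $df$ has pole order at most $n+1\le 2n$ at $Q$ and is regular elsewhere, so the twist by $2nQ$ loses nothing. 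This is the intersection of two $k_n$-subspaces of the space of differentials, and hence is defined over $k_n$. Equivalently, $U(S,Q)\otimes\bar k_0$ is stable under $\Gal(\bar k_0/k_n)$, because the conditions $\ord_P(df)\ge n-1$ are permuted among the $k_n$-rational points $P$, that is, fixed. By Galois descent for vector spaces, the subspace is then defined over $k_n$.

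For the second part, assume $S$ is the divisor of the finite branch points of the model $y^n=h(x)$, with $m\ge3$. Then $Q$ lies over a root $q$ of $h$, and \cref{lem:derivative} with $D=nQ$ and $S'=S-Q$ gives $U(S,Q)=\langle 1,1/(x-q)\rangle$. So $U(S,Q)$ has dimension two. Any basis $(g_1,g_2)$ is obtained from $(1,1/(x-q))$ by an invertible $2\times2$ matrix, so $g_1/g_2=(\alpha+\beta/(x-q))/(\gamma+\delta/(x-q))$ with $\alpha\delta-\beta\gamma\ne0$. This ratio is a Möbius transform of $x$.

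The one delicate point is that \cref{lem:derivative} uses $m-1\ge2$ to kill the terms $f_j$ with $j\ge1$; this is guaranteed because $m\ge3$ in the setting of this section. I would also check that the splitting of $\ord_P(df)$ across $j$ works over $\bar k_0$ and then descends, which the rationality part already supplies.
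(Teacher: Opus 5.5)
Your proposal is correct and follows the paper's proof. Rationality over $k_n$ comes from writing the condition on a $k_n$-basis of $L(nQ)$ as membership of $\sum_i c_i\,df_i$ in the Riemann--Roch space of the $k_n$-rational divisor $-(n-1)(S-Q)+2nQ$, and the dimension and M\"obius claims come from \cref{lem:derivative} with $D=nQ$, $S'=S-Q$. Your check that $df$ has pole order at most $n+1\le 2n$ at $Q$, and your $2\times 2$ matrix argument for the ratio of a basis, supply details the paper leaves implicit.
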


\begin{proof}
Over any field over which $Q$ and $S-Q$ are rational, and $k_n$ is such a field for $S\in\cC_n$, the space $L(nQ)$ has a basis and the condition is a system of linear equations, namely membership of $\sum c_i\,df_i$ in $H^0(\Omega^1(-(n-1)(S-Q)+2nQ))$; its solution space is defined over that field. When $S$ is the branch divisor, \cref{lem:derivative} identifies the solution space as $\langle 1,1/(x-x(Q))\rangle$ over $\bar k_0$, of dimension two, so it contains a non-constant $k_n$-rational element $u=(\alpha+\beta x)/(\gamma+\delta x)$.
\end{proof}

The theorem uses only the last case of \cref{lem:derivative}. The other constructions of the lemma, and the case analysis of \cref{sec:arith}, are needed only when one insists on working over $k_0$.


Let $u\in k(C)$ have degree $n$, with finite branch values $a_1,\dots,a_r\in k$. Suppose that
\begin{equation}\label{eq:fibres}
\begin{split}
u^*(a_j)&=e_j\,F_j,\qquad \deg F_j=d_j,\qquad e_jd_j=n\quad(1\le j\le r),\\
u^*(\infty)&=e_\infty F_\infty,\qquad \deg F_\infty=d_\infty,\qquad e_\infty d_\infty=n,
\end{split}
\end{equation}
with $F_j$, $F_\infty$ reduced; otherwise $u$ is not Galois. Suppose further that $\gcd(n,d_1,\dots,d_r,d_\infty)=1$; otherwise $u$ is not cyclic, since the local monodromies of a cyclic cover generate its Galois group.

\begin{prop}\label{prop:cert}
Under \cref{eq:fibres}, the following are equivalent.
\begin{enumerate}[(i)]
\item $k(C)/k(u)$ is Galois with cyclic Galois group.
\item There are integers $l_1,\dots,l_r$ with $\gcd(n,l_j)=d_j$ for all $j$ and $\gcd(n,l_1+\dots+l_r)=d_\infty$ such that
\begin{equation}\label{eq:Ddiv}
D=\frac1n\Div\Bigl(\prod_{j=1}^{r}(u-a_j)^{l_j}\Bigr)
\end{equation}
is principal.
\end{enumerate}
When (ii) holds and $v$ spans $L(-D)$, there is $c\in k^\times$ with
\[
v^n=c\prod_{j=1}^r(u-a_j)^{l_j},\qquad k(C)=k(u,v).
\]
\end{prop}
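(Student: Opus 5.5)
For (i)$\Rightarrow$(ii) I would use the Kummer step of \cref{sec:kummer}. If $k(C)/k(u)$ is cyclic with generator $\tau$, \cref{lem:kummer} gives $s\in W_1\setminus\{0\}$ with $s^n=\tilde h(u)\in k(u)$ and $k(C)=k(u,s)$. By \cref{lem:local}, at each finite branch value $a_j$ the order $\ord_{a_j}(\tilde h)$ is congruent to the local monodromy exponent $l_j$, and at every non-branch finite place it is divisible by $n$. Multiplying $s$ by a suitable $r(u)\in k(u)^\times$ as in \cref{lem:reduce}, I would arrange $\tilde h=c\prod_j(u-a_j)^{l_j}$ with $l_j$ the monodromy exponents. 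The local monodromy at $a_j$ generates the stabiliser of a point over $a_j$, which has order $e_j$, so $\gcd(n,l_j)=n/e_j=d_j$. At infinity the monodromy exponent is $\equiv-\sum l_j$, because $\ord_\infty\tilde h=-\sum l_j$, so $\gcd(n,\sum l_j)=d_\infty$. Then $\Div(s)=\frac1n\Div(\tilde h)=D$ is principal.

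For (ii)$\Rightarrow$(i), suppose $D=\Div(w)$. Then $\Div(w^n)=\Div\bigl(\prod(u-a_j)^{l_j}\bigr)$, so $w^n=c\prod_j(u-a_j)^{l_j}$ with $c\in k^\times$; this is also the final assertion, since $L(-D)$ is spanned by $w$. The field $k(u,w)$ is a Kummer extension of $k(u)$ of some degree $n'\mid n$, since $k\ni\zeta_n$. The main obstacle is to show $n'=n$, because then $k(u,w)=k(C)$, as $[k(C):k(u)]=n$, and the extension is cyclic. I would compute ramification in $k(u,w)/k(u)$. In the Kummer extension $k(u)(\sqrt[n]{f})$ with $f=\prod(u-a_j)^{l_j}$, the degree is $n/\delta$, where $\delta$ is the largest divisor of $n$ with $f\in c\,k(u)^{\times\delta}$. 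That requires $\delta\mid l_j$ for all $j$, hence $\delta\mid d_j$ for all $j$, and also $\delta\mid\sum l_j$, hence $\delta\mid d_\infty$. So $\delta\mid\gcd(n,d_1,\dots,d_r,d_\infty)=1$, and the degree is $n$.

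I would close by checking that nothing else is needed: the hypotheses \cref{eq:fibres} on the fibres are used only through the gcd condition and the identification $\gcd(n,l_j)=d_j$. In direction (i)$\Rightarrow$(ii), the care needed is in the sign and normalisation of the local monodromy at $\infty$, namely whether $\ord_\infty$ of $\tilde h$ is $-\sum l_j$ or its negative. Only the gcd matters, so the sign is harmless.
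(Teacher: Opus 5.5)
Your proof is correct and essentially the paper's. In (i)$\Rightarrow$(ii) you renormalise the Kummer generator by \cref{lem:reduce}, where the paper writes $\prod_j(u-a_j)^{l_j}/\tilde h=c'q(u)^n$; this is the same step, and it gives $\Div(s)=D$ directly, so you do not need the paper's separate check that $D$ is integral. In (ii)$\Rightarrow$(i) you obtain $[k(u,v):k(u)]=n$ from the order of $\prod_j(u-a_j)^{l_j}$ in $k(u)^\times/k(u)^{\times n}$ rather than from Capelli's theorem, but both arguments rest on the same observation: a common divisor of $n$ and all the $l_j$ divides every $d_j$ and $d_\infty$.
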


\begin{proof}
The divisor $D$ is integral: at a point over $a_j$ the order of $u-a_j$ is $e_j$ and $l_je_j/n=l_j/d_j\in\Z$; at a point over $\infty$ the order of $\prod_j(u-a_j)^{l_j}$ is $-e_\infty\sum_j l_j$ and $d_\infty\mid\sum_j l_j$; elsewhere the order is zero.

Suppose (i), with generator $\tau$, and let $s\in W_1$ for this $\tau$, so that $s^n=\tilde h(u)\in k(u)$ by \cref{lem:kummer}. Let $\tau^{l_j}$ be the local monodromy at $a_j$ and $\tau^{l_\infty}$ that at $\infty$; then $\gcd(n,l_j)=d_j$ and $\gcd(n,l_\infty)=d_\infty$, since the stabiliser $\langle\tau^{l_j}\rangle$ has order $e_j$. By \cref{lem:local}, $l_\infty\equiv\ord_\infty(\tilde h)\equiv-\sum_j\ord_{a_j}(\tilde h)\equiv-\sum_j l_j\pmod n$, so $\gcd(n,\sum_j l_j)=d_\infty$. By \cref{lem:local} the order of $\tilde h$ at $a_j$ is congruent to $l_j$ modulo $n$ and its order at every other finite place is divisible by $n$. Hence every exponent of the rational function $\prod_j(u-a_j)^{l_j}/\tilde h$ at a finite place is divisible by $n$, so its order at infinity is too, and it equals $c'q(u)^n$ with $q\in k(u)^\times$ and $c'\in k^\times$. Then $D-\Div(s)=\tfrac1n\Div\bigl(\prod_j(u-a_j)^{l_j}/\tilde h\bigr)=\Div(q)$, and $D$ is principal.

Suppose (ii) and let $\Div(v)=D$. Then $v^n/\prod_j(u-a_j)^{l_j}$ has trivial divisor, hence equals a constant $c\in k^\times$. The polynomial $X^n-c\prod_j(u-a_j)^{l_j}$ is irreducible over $k(u)$ by Capelli's theorem: a prime $\ell$ dividing $n$ and every $l_j$ would divide $d_1,\dots,d_r$ and $d_\infty$, contradicting $\gcd(n,d_1,\dots,d_r,d_\infty)=1$, and $-4$ is a fourth power in $k$; see \cite[Ch.~VI, Thm.~9.1]{lang2002}. So $[k(u,v):k(u)]=n=[k(C):k(u)]$, whence $k(C)=k(u,v)$ is a Kummer extension of $k(u)$, Galois with cyclic group.
\end{proof}

The choices in (ii) are finite: $l_j$ ranges over $d_j$ times a unit modulo $e_j$. Over $\C$ they may be read off the monodromy of $u$, computed by numerical analytic continuation as in \cite{deconinck2001}; in general they are enumerated, with the $l_j$ constant on the Galois orbits of the branch values over the field of computation, so that no field extension is needed. For the genuine candidates required in \cref{thm:complete} and \cref{thm:complete-k0} this restriction loses nothing. Over $k_n$, the candidate the algorithm must accept has $k_n$-rational totally ramified branch points, and $\zeta_n\in k_n$; the generator of the deck group whose induced action on the cotangent space at one of them is multiplication by $\zeta_n$ is therefore Galois-invariant, hence defined over $k_n$, and \cref{lem:kummer}, applied over $k_n(u)$, gives a Kummer equation over $k_n$, whose exponents are Galois-stable and reduce to the $(l_j)$ by \cref{lem:local}. Over $k_0$ this is \cref{rem:cert-k0}. The test that $D$ is principal is exact in either case. When (ii) holds, the pair $(u,v)$ with $0<l_j<n$ is the normal form of \cref{lem:reduce}, with $h(u)=c\prod_j(u-a_j)^{l_j}\in k[u]$, and $h$ is separable if and only if $(l_j)$ satisfies \cref{eq:sepcond}, by \cref{thm:sep}.

The branch values $a_j$ of a candidate $u$ are read off a plane model in which $u$ is one of the coordinates. The next lemma certifies such a model and confines the branch values to the roots of one polynomial.

\begin{lem}\label{lem:branchvalues}
Let $L=k_0(C)$, let $u\in L$ have degree $n$, and let $z\in L$ satisfy $\gcd(\deg z,n)=1$. Then $k_0(u,z)=L$. If moreover $G\in k_0[T,Z]$ is irreducible with $G(u,z)=0$ and $\deg_ZG=n$, then every finite branch value of $u$ is a root of $\disc_Z(G)\cdot\operatorname{lc}_Z(G)$.
\end{lem}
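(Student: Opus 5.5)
The plan is to prove the two claims of \cref{lem:branchvalues} separately: first a degree count in the tower of fields, then a local computation at a branch value.

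For the first claim, consider the tower $k_0(u)\subseteq k_0(u,z)\subseteq L$ and $k_0(z)\subseteq k_0(u,z)\subseteq L$. Here $\deg u=[L:k_0(u)]=n$ and $\deg z=[L:k_0(z)]$. Put $e=[L:k_0(u,z)]$. Then $e$ divides both $[L:k_0(u)]=n$ and $[L:k_0(z)]=\deg z$. Since $\gcd(\deg z,n)=1$, this forces $e=1$, so $k_0(u,z)=L$. This is routine.

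For the second claim, $L=k_0(u)(z)$ and $[L:k_0(u)]=n$, so the minimal polynomial of $z$ over $k_0(u)$ has degree $n$. The polynomial $G(u,Z)\in k_0[u][Z]$ is irreducible in $k_0[T,Z]$, has degree $n$ in $Z$, and vanishes at $z$. By Gauss's lemma it is irreducible over $k_0(u)$, hence a scalar multiple of that minimal polynomial, and it defines $L$.

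Now take a finite value $a\in\bar k_0$ with $\operatorname{lc}_Z(G)(a)\,\disc_Z(G)(a)\neq 0$. We must show that $a$ is not a branch value. The polynomial $G(a,Z)$ then has degree $n$ and $n$ distinct roots $\beta_1,\dots,\beta_n$. By Hensel's lemma in the completion $\bar k_0[[u-a]]$, each $\beta_i$ lifts to a root $z_i\in\bar k_0[[u-a]]$ of $G(u,Z)$, and these lifts are pairwise distinct. So $G$ splits completely over the completion of $\bar k_0(u)$ at $a$. The places of $\bar k_0 L$ above $a$ correspond to the irreducible factors of $G$ over that completion. Hence there are $n$ places above $a$, each of ramification index one, and $u$ is unramified over $a$. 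Equivalently: $z$ is integral over the local ring at $a$, and the discriminant of the order $\bar k_0[u]_{(u-a)}[z]$ is a unit. That order is therefore étale, hence equal to the integral closure, and unramified.

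The main point needing care is the use of the leading coefficient. If $\operatorname{lc}_Z(G)(a)=0$, then $z$ need not be integral at $a$, and the local argument above fails. This is exactly why the factor $\operatorname{lc}_Z(G)$ is included: outside its roots, $z$ is integral over $\bar k_0[u]_{(u-a)}$. The argument runs over $\bar k_0$, and that suffices because branch values are geometric. The contrapositive then gives the claim: every finite branch value is a root of $\disc_Z(G)\cdot\operatorname{lc}_Z(G)$.
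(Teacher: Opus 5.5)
Your proof is correct. The first part, that $[L:k_0(u,z)]$ divides the coprime integers $n$ and $\deg z$, is the paper's argument. The second part takes a different route.

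The paper stays with the places of $L$ above $a$. Since $\operatorname{lc}_Z(G)(a)\neq 0$, each such place has a finite $z$-value, and that value is a root of $G(a,Z)$. The paper then plays a count of places against the fundamental identity $\sum_{P\mid a}e_Pf_P=n$. You instead lift the $n$ simple roots of $G(a,Z)$ by Hensel's lemma to $\bar k_0[[u-a]]$, so that $G(u,Z)$ splits into $n$ distinct linear factors over the completion. You then read off $n$ places with $e_P=1$ from the correspondence between places above $a$ and irreducible factors of the minimal polynomial over the completion. Equivalently, as you note, the order $\bar k_0[u]_{(u-a)}[z]$ is étale because its discriminant is a unit.

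What your route buys is that it exhibits $n$ distinct places directly, and a lower bound on the number of places is exactly what the fundamental identity needs. In the paper's proof, the step from ``at most $n$ places'' (via injectivity of $P\mapsto z(P)$) together with $\sum e_Pf_P=n$ to $e_P=f_P=1$ does not follow as written: a single place with $e_P=n$ satisfies both. The paper's argument needs supplementing by the fact that every root of $G(a,Z)$ is the $z$-value of some place above $a$, which holds by lying over for the integral element $z$. Your Hensel lifting supplies this at once. Your version also works over $\bar k_0$ from the start, so it covers branch values that do not lie in $k_0$, whereas the paper phrases its argument for $a\in k_0$.

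Two small remarks. First, say explicitly that $[\bar k_0L:\bar k_0(u)]=n$ because $C$ is absolutely irreducible. This keeps $G(u,Z)$, up to a scalar, the minimal polynomial of $z$ over $\bar k_0(u)$, which your correspondence between places and factors uses. Second, Gauss's lemma is not needed. A non-zero polynomial of degree $n$ in $Z$ that vanishes at $z$, whose minimal polynomial over $k_0(u)$ also has degree $n$, is already a scalar multiple of that minimal polynomial.
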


\begin{proof}
The degree $[L:k_0(u,z)]$ divides $[L:k_0(u)]=n$ and $[L:k_0(z)]=\deg z$, which are coprime, so it equals one. Let $a\in k_0$ satisfy $\operatorname{lc}_Z(G)(a)\ne0$ and $\disc_Z(G)(a)\ne0$. Then $G(a,Z)$ has degree $n$ and $n$ distinct roots. Every place of $L$ above $u=a$ has $z$ finite, since $\operatorname{lc}_Z(G)(a)\ne0$, and its $z$-value is a root of $G(a,Z)$; distinct places above $u=a$ have distinct $z$-values, since $k_0(u,z)=L$. Hence there are at most $n$ such places, and $\sum_{P\mid a}e_Pf_P=n$ forces exactly $n$ places with $e_P=f_P=1$. So $a$ is not a branch value.
\end{proof}


The levels with $m=2$ are invisible to Weierstrass points by \cref{thm:wp}, and are decided from the hyperelliptic model instead. Let $K\subseteq k$ be a field, let $C:y_2^2=f(x_2)$ with $f\in K[x_2]$ separable of degree $2g+1$ or $2g+2$, and let $t\in\PP^1(K)$. Choose $\gamma_t\in\mathrm{PGL}_2(K)$ with $\gamma_t(t)=\infty$, and let $f_t\in K[x]$ be the polynomial for which $y^2=f_t(x)$ is the model obtained from $y_2^2=f(x_2)$ by $x=\gamma_t(x_2)$. For $t$ finite and $\gamma_t(x_2)=1/(x_2-t)$,
\[
f_t(x)=x^{2g+2}f(t+1/x)=\sum_{i=0}^{2g+2}\frac{f^{(i)}(t)}{i!}\,x^{2g+2-i},
\]
and $f_\infty=f$. Another choice of $\gamma_t$ replaces $f_t$ by $c^2f_t(\lambda x+\mu)$ with $c,\lambda\in K^\times$ and $\mu\in K$. Since $f$ is separable, $f(t)$ and $f'(t)$ do not both vanish, so $\deg f_t\in\{2g+1,2g+2\}$, with $\deg f_t=2g+2$ if and only if $f(t)\ne0$.

\begin{prop}\label{prop:shape}
Let $g\ge2$, let $C:y_2^2=f(x_2)$ with $f\in K[x_2]$ separable of degree $2g+1$ or $2g+2$, let $n\in\{2g+1,2g+2\}$, and put $m=n-1$. Let $T_n$ be the set of $t\in\PP^1(K)$ for which
\[
f_t=a\,(x-\beta)^n+b,\qquad a,b\in K^\times,\ \beta\in K .
\]
\begin{enumerate}[(i)]
\item There are $u,v\in K(C)$ with $K(u,v)=K(C)$ and $v^n=(u^2-b)/a$ for some $a,b\in K^\times$ if and only if $T_n\ne\emptyset$.
\item $T_n$ is finite. It contains $\infty$ only if $\deg f=n$, and a finite $t\in T_n$ is a root of $f$ if $n=2g+1$ and satisfies $f(t)\ne0$ if $n=2g+2$.
\item Let $n=2g+2$ and let $t\in K$. Then $t\in T_n$ if and only if $p_t=f_t'$ satisfies
\begin{equation}\label{eq:shape}
m\,p_t\,p_t''=(m-1)\,(p_t')^2 .
\end{equation}
The coefficients of $m\,p_t\,p_t''-(m-1)(p_t')^2$ as a polynomial in $x$ are polynomials in $t$ over $K$, not all zero, and $T_n\cap K$ is the set of their common roots in $K$.
\end{enumerate}
\end{prop}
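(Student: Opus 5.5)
The plan is to treat the three parts separately, with (iii) proved before the finiteness claim in (ii), since finiteness for $n=2g+2$ comes out of the non-vanishing statement in (iii).

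\emph{Part (i).} For the implication from $T_n\ne\emptyset$, take $t\in T_n$ and the model $y^2=f_t(x)=a(x-\beta)^n+b$. Put $u=y$ and $v=x-\beta$. Then $v^n=(u^2-b)/a$. Moreover $K(u,v)=K(x,y)=K(C)$, because $x=\gamma_t(x_2)$ with $\gamma_t\in\mathrm{PGL}_2(K)$.

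For the converse, the equation $u^2=av^n+b$ has separable right hand side, since $ab\ne0$, and this right hand side has degree $n\in\{2g+1,2g+2\}$. So it is a hyperelliptic model of $C$, and $v$ has degree two. Since $g\ge2$, the $g^1_2$ is unique, so the degree two subfield is unique, and therefore $K(v)=K(x_2)$. This gives $v=\gamma(x_2)$ with $\gamma\in\mathrm{PGL}_2(K)$. Put $t=\gamma^{-1}(\infty)\in\PP^1(K)$.

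It remains to compare the two hyperelliptic models over the same coordinate $x=\gamma_t(x_2)$, which is affine in $v$, say $v=\lambda x+\mu$. Both $u$ and the $y$ of the model $y^2=f_t(x)$ are anti-invariant under the hyperelliptic involution, so $u/y\in K(x)$. Both defining polynomials are separable of degree $2g+1$ or $2g+2$, so the divisor of $u/y$ has no finite zeros or poles. Hence $u=cy$ with $c\in K^\times$, and $f_t(x)=c^{-2}\bigl(a(\lambda x+\mu)^n+b\bigr)$. This is again of the required shape, so $t\in T_n$.

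\emph{Part (iii).} The key identity is elementary: a polynomial $p$ of degree $d\ge1$ satisfies $m\,p\,p''=(m-1)(p')^2$ if and only if $d=m$ and $p=c(x-\beta)^m$.
\begin{itemize}
\item Comparing leading coefficients gives $m\,d(d-1)=(m-1)d^2$, which forces $d=m$.
\item With $w=p'/p$ the identity becomes $w'=-w^2/m$. Hence $1/w=(x-\beta)/m$, so $p'/p=m/(x-\beta)$ and $p=c(x-\beta)^m$.
\end{itemize}

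Now take $n=2g+2$ and $m=n-1$. If $t\in T_n$, then $p_t=f_t'=na(x-\beta)^m$ satisfies \eqref{eq:shape}. Conversely, suppose $p_t$ satisfies \eqref{eq:shape}. Then $p_t=c(x-\beta)^m$, and integrating gives $f_t=a(x-\beta)^n+b$. Here $a\ne0$ because $p_t\ne0$, and $b\ne0$ because $f_t$ is separable; so $t\in T_n$. In the case $f(t)=0$ we have $\deg p_t=2g\ne m$, so the identity fails, consistent with (ii).

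By the Taylor formula for $f_t$, the coefficients of $m\,p_tp_t''-(m-1)(p_t')^2$ are polynomials in $t$ over $K$. They are not all zero: evaluate at a root $t_0\in\bar K$ of $f$. There $\deg p_{t_0}=2g\ne m$, so the expression is non-zero at $t_0$. This also gives finiteness of $T_n\cap K$ for $n=2g+2$.

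\emph{Part (ii).} Use $\deg f_t\in\{2g+1,2g+2\}$, with $\deg f_t=2g+2$ if and only if $f(t)\ne0$.
\begin{itemize}
\item For $n=2g+1$, the shape of $f_t$ forces $\deg f_t=n=2g+1$, hence $f(t)=0$. So $T_n$ is contained in the root set of $f$ together with possibly $\infty$.
\item For $n=2g+2$, it forces $f(t)\ne0$, and finiteness follows from (iii).
\item Since $f_\infty=f$, the point $\infty$ lies in $T_n$ only if $\deg f=n$.
\end{itemize}

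I expect the main obstacle to be the converse direction of (i). There one must justify carefully that the $K$-rational coordinate change relating $v$ to $x_2$ fixes $\infty$ up to an affine map, and that the factor relating $u$ to $y$ is a constant, so that the shape $a(x-\beta)^n+b$ is preserved over $K$ rather than only over $\bar K$.
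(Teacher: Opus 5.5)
Your proof is correct. Part (i) follows the paper's argument. Your check that $u/y\in K(x)$ has no finite zeros or poles, hence is a constant, spells out what the paper compresses into ``up to a square factor in $K^\times$''.

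In (iii) you integrate $(1/w)'=1/m$ over $K$, which gives $\beta\in K$ directly. The paper instead expands $p_t'/p_t$ in partial fractions over $\bar K$, shows every root has multiplicity $m$, uses $\deg p_t\le m$ to get a single root, and recovers $\beta\in K$ from the coefficient of $x^{m-1}$. This is the same logarithmic-derivative idea in a different execution.

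The genuine difference is that you prove finiteness and the non-vanishing of the coefficients in the opposite order.
\begin{itemize}
\item The paper proves $T_n$ finite geometrically, for both values of $n$. Each $t\in T_n$ yields a M\"obius map $\delta_t$ carrying the branch locus onto $\mu_n$ (with $\infty$ added for $n$ odd). Distinct $t$ give distinct $\delta_t$, and any two differ by an element of the finite stabiliser of that set. It then gets ``not all zero'' because otherwise every $t$ in the infinite field $K$ would lie in $T_n$.
\item You specialise at a root $t_0\in\bar K$ of $f$, where $\deg p_{t_0}=2g\ne m$. Your leading-coefficient comparison then shows the expression is non-zero at $t_0$. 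Finiteness for $n=2g+2$ follows because some coefficient is a non-zero polynomial in $t$. For $n=2g+1$ it follows because $T_n$ lies in the roots of $f$ together with $\infty$.
\end{itemize}
Your route is purely algebraic and needs neither the stabiliser argument nor the infinitude of $K$. The paper's route gives finiteness uniformly and independently of (iii). It also implicitly bounds $|T_n|$ by the order of a stabiliser that depends only on $n$.
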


\begin{proof}
(i) Let $t\in T_n$ and put $u=y$, $v=x-\beta$ in the model $y^2=f_t(x)=a(x-\beta)^n+b$. Then $u^2=av^n+b$, so $v^n=(u^2-b)/a$, and $K(u,v)=K(x,y)=K(C)$. Conversely let $v^n=(u^2-b)/a$ with $K(u,v)=K(C)$, that is $u^2=av^n+b$. The polynomial $av^n+b$ is separable of degree $n\ge5$, so this is a hyperelliptic model of $C$ over $K$ with $x$-coordinate $v$. The hyperelliptic map is unique for $g\ge2$, so $K(v)=K(x_2)$ and $v=M(x_2)$ with $M\in\mathrm{PGL}_2(K)$. Put $t=M^{-1}(\infty)\in\PP^1(K)$. Then $M\gamma_t^{-1}$ fixes $\infty$, hence is affine over $K$, and $f_t$ is the transform of $av^n+b$ under it, up to a square factor in $K^\times$; so $f_t=a'(x-\beta')^n+b'$ with $a',b'\in K^\times$ and $t\in T_n$.

(ii) For $t\in T_n$ the model $y^2=f_t(x)$ has $\deg f_t=n$, so $t=\infty$ requires $\deg f=n$, and a finite $t$ satisfies $f(t)=0$ when $n=2g+1$ and $f(t)\ne0$ when $n=2g+2$. For finiteness, let $t\in T_n$ and write $f_t=a(x-\beta)^n+b$; the roots of $f_t$ are $\beta+\rho\,\mu_n$ with $\rho^n=-b/a$, so $\delta_t=\gamma_t$ followed by $x\mapsto(x-\beta)/\rho$ carries the branch locus of $C$ to the set $\mu_n$ of $n$-th roots of unity, together with $\infty$ when $n$ is odd. Distinct $t$ give distinct $\delta_t$, since $t=\delta_t^{-1}(\infty)$. Two such maps differ by an element of $\mathrm{PGL}_2(k)$ stabilising that target set, which has at least three points, and the stabiliser of a finite set of at least three points is finite. Hence $T_n$ is finite.

(iii) Put $L=p_t'/p_t$. Dividing \cref{eq:shape} by $p_t^2$ and using $p_t''/p_t=L'+L^2$ gives $mL'+L^2=0$. Write $p_t=c\prod_i(x-\beta_i)^{\mu_i}$ with the $\beta_i$ distinct, so that $L=\sum_i\mu_i/(x-\beta_i)$ and $L'=-\sum_i\mu_i/(x-\beta_i)^2$. Comparing the coefficients of the double pole at $\beta_i$ in $L^2=-mL'$ gives $\mu_i^2=m\mu_i$, so $\mu_i=m$ for every $i$. Now $\deg f_t\le 2g+2$ gives $\deg p_t\le 2g+1=m$, and $\deg p_t\ge1$ since $\deg f_t\ge 2g+1\ge5$; so $p_t$ has exactly one root, of multiplicity $m$, and $p_t=a'(x-\beta)^{m}$ with $a'$ the leading coefficient of $p_t$. Both lie in $K$: the coefficient of $x^{m-1}$ in $p_t$ is $-a'm\beta$ and $a'\in K^\times$. Integrating, $f_t=a(x-\beta)^n+b$ with $a=a'/n\in K^\times$ and $b=f_t(\beta)\in K$, and $b\ne0$ because $f_t$ is separable. Conversely $p_t=na(x-\beta)^{m}$ satisfies \cref{eq:shape}. The coefficients of $f_t$ are the $f^{(i)}(t)/i!$, hence polynomials in $t$, and so are the coefficients of $m\,p_t\,p_t''-(m-1)(p_t')^2$. If all of them vanished identically, then every $t\in K$ would lie in $T_n$, contradicting the finiteness in (ii), since $K$ is infinite in characteristic zero.
\end{proof}

\begin{algorithm}\label{alg:main}
Input: $F\in k_0[x,y]$ absolutely irreducible, of genus $g\ge2$. Output: the set of levels $n$ of $C_{\bar k_0}$ admitting a separable model, each with $(u,v,h)$ such that $v^n=h(u)$, $h\in k_n[u]$ separable, and $\bar k_0(u,v)=\bar k_0(C)$.
\begin{enumerate}[1.]
\item Compute $g$, a basis of $H^0(C,\Omega^1)$, and the Weierstrass divisor $E$ of \cref{eq:wdiv} over $k_0$.
\item For each $n$ with $2\le n\le g+2$ form $k_n$ and the candidate set $\cC_n$ of \cref{eq:cand} over $k_n$, and keep the $S\in\cC_n$ passing the dimension test \cref{eq:dimtest}.
\item For each surviving $(S,n)$ and a point $Q$ of $S$ compute the subspace $U(S,Q)$ of \cref{thm:quotient-geom} over $k_n$. If $\dim U(S,Q)\ne 2$, discard the candidate; otherwise let $u$ be a non-constant ratio of a basis of $U(S,Q)$.
\item Apply \cref{prop:cert} to $u$ over $k_n$. If $u$ is not cyclic, discard the candidate. If $u$ is cyclic with local data $(l_j)$ and \cref{eq:sepcond} fails, discard the candidate; if \cref{eq:sepcond} holds, output $n$ with $(u,v,h)$, where $h$ is made separable by \cref{lem:reduce} and \cref{thm:sep}.
\item If the level $2$ was output with model $y_2^2=f(x_2)$, then for each $n\in\{2g+1,2g+2\}$ form the finite set $T_n$ of \cref{prop:shape}: it contains $\infty$ when $\deg f=n$, the roots of $f$ in the field of computation when $n=2g+1$, and the common roots in that field of the coefficients of $m\,p_t\,p_t''-(m-1)(p_t')^2$, $m=n-1$, when $n=2g+2$. For each $t\in T_n$ write $f_t=a(x-\beta)^n+b$ and output $n$ with
\[
v^n=\frac{u^2-b}{a},\qquad (u,v)=(y,\ x-\beta),
\]
in the coordinates of the model $y^2=f_t(x)$.
\end{enumerate}
\end{algorithm}

Every output is verified by division: writing $v^n-h(u)$ as a rational function of $x$ and $y$,
\[
F \mid \operatorname{num}\bigl(v^n-h(u)\bigr),\qquad \deg u^*(\infty)=n .
\]
By \cref{prop:cert}, \cref{thm:sep} and the division check the algorithm has no false positives.

Step 5 is the only place where hyperelliptic curves are treated separately, and only for the two levels with $m=2$. By \cref{lem:dichotomy} a separable model of level $2g+1$ or $2g+2$ has $m=2$, hence after completing the square reads $y^n=x^2-c$ with $c\in k^\times$, which is the model of \cref{prop:shape}(i); by \cref{thm:wp} its branch points are not Weierstrass points, so these two levels are invisible to steps 2 to 4, and each is realised by a single curve (\cref{prop:Sgn}). Correctness of step 5 is \cref{prop:shape}, applied over the field of computation: part (i) is the equivalence between the level and the shape of $f_t$, and parts (ii) and (iii) make the set of candidates $t$ finite and computable without any enumeration of M\"obius transformations.

For $n\in\{2g+1,2g+2\}$ put $k_n=k_2(\zeta_{2n})$. If $[C]\in\cS_{g,n}$, then $C$ is hyperelliptic and, by the model $y^n=x^2-c$, its branch locus is carried to the roots of $x^n+1$, together with $\infty$ when $n$ is odd, by a M\"obius transformation $\gamma$. The branch points of the level $2$ model are rational over $k_2$ and those roots over $k_0(\zeta_{2n})$, and $\gamma$ is determined by the images of three of them, so $\gamma\in\mathrm{PGL}_2(k_n)$ and $t=\gamma^{-1}(\infty)\in\PP^1(k_n)$. Hence $T_n\ne\emptyset$ over $k_n$, and step 5 run over $k_n$ outputs $n$ with a model and a transformation defined over $k_n$.

\begin{theorem}\label{thm:complete}
Let $C$ be given by $F\in k_0[x,y]$, of genus $g\ge 2$, and let $n\ge 2$. Then $[C]\in\cS_{g,n}$ if and only if \cref{alg:main} outputs $n$. When it does, the model $v^n=h(u)$ returned is defined over $k_n$, and so is the birational transformation.
\end{theorem}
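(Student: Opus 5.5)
The proof splits into soundness (if the algorithm outputs $n$, then $[C]\in\cS_{g,n}$) and completeness (if $[C]\in\cS_{g,n}$, then the algorithm outputs $n$).

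Soundness is quick. An output of step 4 comes from \cref{prop:cert}, so $k(C)=k(u,v)$ with $v^n=c\prod_j(u-a_j)^{l_j}$. Since \cref{eq:sepcond} holds, \cref{thm:sep} and \cref{lem:reduce} turn this into a separable model over $\bar k_0$. An output of step 5 is a model $v^n=(u^2-b)/a$ with $ab\ne0$, which is separable. In both cases the division check certifies the birational map. Hence every output $n$ satisfies $[C]\in\cS_{g,n}$.

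For completeness, fix a separable model $y^n=h(x)$ over $\bar k_0$ with $\deg h=m$, and split on $m$ using \cref{lem:dichotomy}.

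Case $m=2$. Here $n\in\{2g+1,2g+2\}$ and $C$ is hyperelliptic. First I must know that level $2$ is output in steps 2 to 4, since step 5 is triggered by it. The hyperelliptic model is separable of degree $2g+1$ or $2g+2$, so this is the case $n=2$, $m\ge 3$ treated below. Step 5 then applies: the paragraph preceding the theorem shows $T_n\ne\emptyset$ over $k_n=k_2(\zeta_{2n})$, and \cref{prop:shape} produces the output with a transformation defined over $k_n$.

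Case $m\ge 3$. Then $n\le g+2$, so $n$ lies in the range of step 2. Let $S$ be the divisor of the finite branch points. By \cref{cor:finite} its points are distinct Weierstrass points with sequence $s_{n,m}$, hence of weight $w(s_{n,m})$, so $S\in\cC_n$. Its points are $k_n$-rational by the definition of $k_n$. \Cref{prop:eigen} shows that $S$ passes the dimension test \cref{eq:dimtest}. For any point $Q$ of $S$, \cref{thm:quotient-geom} gives $\dim U(S,Q)=2$ with a $k_n$-rational ratio $u$ that is a Möbius transform of $x$. Then $k(C)/k(u)$ is cyclic with the same branch data up to \cref{lem:equiv}, so \cref{eq:sepcond} holds.

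It remains to see that step 4 finds the right exponents. I will use the remark after \cref{prop:cert}: over $k_n$ the deck generator acting by $\zeta_n$ at a rational totally ramified point is Galois-invariant. By \cref{lem:kummer} this gives a Kummer equation over $k_n$ whose exponents are constant on Galois orbits of the branch values. So the restricted enumeration in step 4 includes the correct $(l_j)$, \cref{prop:cert}(ii) holds, and $n$ is output. Rationality over $k_n$ follows because $u$, $L(-D)$ and the reduction of \cref{lem:reduce} are all computed over $k_n$.

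I expect the main obstacle to be the descent step: ensuring that the certificate and the normal form $v$ exist over $k_n$ rather than merely over $\bar k_0$. This needs the Galois-invariance of the chosen generator and the Galois-stability of the exponents. A second point is the interaction between steps 4 and 5: step 5 presupposes a level-$2$ output, which I will secure by treating $n=2$ inside the $m\ge3$ case.
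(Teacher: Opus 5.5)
Your proposal is correct and follows the paper's proof. Soundness comes from \cref{prop:cert} and \cref{thm:sep}. Completeness splits on $m$ by \cref{lem:dichotomy}: for $m=2$ it reduces to the level-$2$ output followed by step 5 over $k_2(\zeta_{2n})$, and for $m\ge 3$ it runs through \cref{cor:finite}, \cref{prop:eigen}, \cref{thm:quotient-geom} and \cref{prop:cert} over $k_n$. Two points the paper's proof leaves to the surrounding text are spelled out in your version: the Galois-invariant deck generator showing that step 4's restricted enumeration contains the genuine $(l_j)$, and the check that step-5 outputs are separable.
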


\begin{proof}
If the algorithm outputs $n$, then $u$ is cyclic of degree $n$ by \cref{prop:cert} and $h$ is separable, so $[C]\in\cS_{g,n}$. Conversely let $\pi:C_{\bar k_0}\to\PP^1$ realise a level $n$ with a separable model $y^n=h(x)$, $\deg h=m$. If $m=2$, then $C$ is hyperelliptic by \cref{lem:dichotomy} and $n\in\{2g+1,2g+2\}$; the level $2$ has $m'\ge5$ finite branch points, is treated by the rest of the argument, and $n$ is then found in step 5, with model and transformation defined over $k_n=k_2(\zeta_{2n})$. Let $m\ge 3$. By \cref{cor:finite} the finite branch points are Weierstrass points with sequence $s_{n,m}$, hence of weight $w(s_{n,m})$, so $S\in\cC_n$; and $S$ passes the dimension test by \cref{prop:eigen}. By \cref{thm:quotient-geom} the subspace of step 3 has dimension two and yields a M\"obius transform $u$ of $x$ over $k_n$; its branch data is $(1,\dots,1)$, together with $-m$ at infinity when $n\nmid m$, so $u$ is cyclic, step 4 accepts it by \cref{prop:cert}, condition \cref{eq:sepcond} holds with $c=1$, and \cref{thm:sep} makes $h$ separable. All computations are Riemann--Roch spaces of $k_n$-rational divisors, so the output is defined over $k_n$.
\end{proof}

The field $k_n$ can be large. For $2\le n\le g+2$ its degree over $k_0$ is finite, at most $\varphi(n)\prod_P(\deg P)!$ over the closed points $P$ of $E$ whose weight is $w(s_{n,m})$ for some pair $(n,m)$ satisfying \cref{eq:genus}; for $n\in\{2g+1,2g+2\}$ it is at most $\varphi(2n)\,[k_2:k_0]$. It is a convenient uniform field over which every candidate splits pointwise; it need not be minimal, and an individual level and its branch points may be defined over much smaller fields. \Cref{sec:arith} determines when the algorithm can be run over $k_0$ or over a residue field of a single closed point of $S$, which is the situation in every example of this paper.


\begin{rem}\label{rem:incomplete}
Levels without a separable model are outside \cref{thm:complete}. When such a level is realised by a cover with two totally ramified points $P,P'$ and $n\le g$, the points are Weierstrass points by \cref{prop:small} and $L(nP'-nP)$ is spanned by a cyclic function; this recovers the level after a base change making $P,P'$ rational. 
It can fail beyond that range: the branch data $n=4$, $(l_j)=(1,1,3,3)$ has genus three, every branch point is totally ramified, and by \cref{eq:sn} the vanishing sequence at each is the generic $(0,1,2)$, so no branch point is a Weierstrass point.
Whole genera can consist of such levels. By \cref{rem:genus5} every level of a non-hyperelliptic curve of genus five is without a separable model, and six of the eight such levels have $n>g$.
\end{rem}

\section{Descent to the field of definition}\label{sec:arith}

The extension $k_n$ of \cref{thm:complete} was chosen so that every candidate branch divisor splits pointwise; the branch points of an individual level generate a smaller field in general. Over a number field one wants the model $y^n=h(x)$ over $k_0$ itself, or over a controlled extension that carries it. This section determines what \cref{alg:main} decides over $k_0$, and at what cost in field extensions.

Throughout, $\pi:C\to\PP^1$ is a cyclic cover of degree $n$ with a separable model $y^n=h(x)$, $\deg h=m\ge3$, and $\pi$ and the model are defined over $k_0$; the notation $S$, $d$, $e$, $\bar Q$, $F_\infty$, $b_{\max}$, $A$, $c$, $\omega_0$, $\omega_1$ is that of \cref{sec:alg}. Both $S$ and $\bar Q$ are then $k_0$-rational divisors, and \cref{prop:eigen}, \cref{eq:dimtest}, \cref{eq:omega01} and \cref{eq:divomega} hold over $k_0$.

\begin{rem}\label{rem:descent}
The assumption that $\pi$ is defined over $k_0$ has two parts.
First, $\langle\tau\rangle\subset\Aut(C)$ must be Galois-stable; by \cite{hidalgo} the superelliptic group of a given level is unique for $n$ odd and, up to explicit exceptions, for $n$ even, and a unique group is stable.
 Second, $C/\langle\tau\rangle$ is then a $k_0$-form of $\PP^1$, that is a conic, which is $\PP^1_{k_0}$ if and only if it has a $k_0$-point; over a number field it holds after at most one quadratic extension. When the level $n$ group is not unique, its conjugates give distinct covers with the same branch data, each defined over the field fixing it.
\end{rem}

\subsection{Candidates}\label{sec:cand-k0}

A branch divisor of a cover defined over $k_0$ is a $k_0$-rational divisor, hence a sum of closed points of $E$. The candidates of level $n$ over $k_0$ are
\begin{equation}\label{eq:cand-k0}
\begin{split}
\cC_n(k_0)=\Bigl\{\,S=\textstyle\sum_i P_i:\ & P_i \text{ distinct closed points of } E,\ w(P_i)=w(s_{n,m}),\\
& \textstyle\sum_i\deg P_i=m,\ (n,m)\text{ satisfies \cref{eq:genus}}\Bigr\},
\end{split}
\end{equation}
a subset of the $\cC_n$ of \cref{eq:cand} formed without extending $k_0$. In practice it is small: ordinary Weierstrass points have weight one, and the only cases with $w(s_{n,m})=1$ are $(n,m)=(3,4)$ in genus three and $(n,m)\in\{(2,5),(2,6)\}$ in genus two.

\subsection{The quotient function}\label{sec:quotient-k0}

Over $k_n$ the quotient function is obtained from a single rational branch point by \cref{thm:quotient-geom}. Over $k_0$ no branch point need be rational, and the other constructions of \cref{lem:derivative}, together with the eigenspaces of \cref{prop:eigen}, replace it.

\begin{theorem}\label{thm:quotient}
Let $S$ be the branch divisor of $\pi$, and let $b_{\max}$, $A$ be as in \cref{eq:bmaxA} and $c$ as in \cref{eq:divomega}. A M\"obius transform of $x$ is obtained over $k_0$ from Riemann--Roch spaces of $k_0$-rational divisors, after finitely many candidate constructions each tested in step 4, in each of the following cases:
\begin{enumerate}[(a)]
\item $A\ge 1$;
\item $A=0$ and $c\ge 1$;
\item $A=0$, $c=0$ and $d=1$;
\item $A=0$, $c=0$, $d>1$, and either $d<m<n$ or $S$ contains a $k_0$-rational point.
\end{enumerate}
In the remaining case,
\begin{enumerate}[(a)]
\item[(d$'$)] $A=0$, $c=0$, $d>1$, $S$ has no $k_0$-rational point, and $m=d$ or $m\ge n$,
\end{enumerate}
the same holds after one base change to the residue field of a closed point of $S$, of degree at most $m$. The five cases are exhaustive: $c=0$ gives $m\mid n+d$, so $m=d$, or $d<m<n$, or $m\in\{n,n+d\}$, and $m=n$ forces $m=d$. Applied to an arbitrary candidate $S$, failure of an asserted divisor shape or dimension below causes the candidate to be discarded; the proof guarantees success for the genuine branch divisor.
\end{theorem}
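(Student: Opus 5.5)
The plan is to reduce every case to one of the two constructions of \cref{lem:derivative}. In each case we first produce, over $k_0$, either the divisor $F_\infty=e\bar Q$ or a divisor $nQ$ with $Q$ a rational point of $S$. Then \cref{lem:derivative} cuts out, by a linear condition on $k_0$-rational Riemann--Roch data, the space $\langle 1,x\rangle$ or $\langle 1,1/(x-x(Q))\rangle$. Throughout, $S$ is the $k_0$-rational candidate. Every divisor shape or dimension asserted below is checked when the candidate is processed, and the candidate is discarded on failure, so only the genuine branch divisor needs to be analysed. The tool for recovering $\bar Q$ is \cref{prop:eigen}: $V_{b_{\max}}=L(K-b_{\max}S)$ and $V_{b_{\max}-1}\oplus V_{b_{\max}}=L(K-(b_{\max}-1)S)$, combined with the divisors \cref{eq:divomega} of $\omega_0,\omega_1$.

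In case (a) the space $V_{b_{\max}}$ consists of the differentials $\varphi(x)\omega_0$ with $\deg\varphi\le A$. Their common zero divisor is $b_{\max}S+c\bar Q$, because polynomials of degree $\le A$ have no common root and the minimum order at $\bar Q$ is attained by $\varphi=x^A$, up to the sign of the shift. So I would compute the fixed part of the linear system $|K-b_{\max}S|$ over $k_0$. When $c\ge1$, its reduced support is $\bar Q$. When $c=0$, I would instead take the moving part: the divisor of $\omega/\omega_0$ for $\omega=\omega_0$ and $\omega=x^A\omega_0$ has polar part $AF_\infty$, and its reduced support, divided out appropriately, gives $F_\infty=e\bar Q$. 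Equivalently, one can intersect $\{\omega/\omega_0\}$ with $L(F_\infty)$ once $\bar Q$ is known. In case (b), $V_{b_{\max}}=\langle\omega_0\rangle$ is one-dimensional. Then $\Div(\omega_0)-b_{\max}S=c\bar Q$ with $c\ge1$, so $\bar Q$ is the reduced support of this $k_0$-rational divisor. In both cases \cref{lem:derivative} with $D=F_\infty$ yields $\langle1,x\rangle$.

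In cases (c) and (d), $A=0$ and $c=0$, so $\Div(\omega_0)=b_{\max}S$ carries no information on $\bar Q$. Here I would pass to $L(K-(b_{\max}-1)S)$, which is available since $b_{\max}\ge1$: $b_{\max}=0$ forces $A=g-1\ge1$. The ratios $\omega/\omega_0$ are $\psi(x)+\lambda/y$, and $1/y=\omega_1/\omega_0$ has divisor $-S+\tfrac md\bar Q$. I would isolate $\bar Q$ as the part of the zero divisors of these ratios outside $S$, or via the fixed part of $|K-(b_{\max}-1)S|$, which by \cref{eq:divomega} is supported on $S\cup\bar Q$. In case (c) we have $d=1$, so $\bar Q$ is a single point of degree one and hence $k_0$-rational; any recovered divisor supported there determines it, and \cref{lem:derivative} with $F_\infty=n\bar Q$ applies. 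In case (d) with $d<m<n$, the multiplicities $m/d$ at $\bar Q$ and the contribution of $S$ are distinguishable by degree, so the same separation works. If instead $S$ contains a $k_0$-rational point $Q$, I would bypass $\bar Q$ entirely and apply \cref{thm:quotient-geom} with $k_0$ in place of $k_n$. In case (d$'$), I would base change to the residue field of a closed point of $S$, of degree $\le m$ since $\deg S=m$, and then apply \cref{thm:quotient-geom} there.

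The exhaustiveness claim is arithmetic. The identity $n-1-b_{\max}=\lceil(n+d)/m\rceil$ noted after \cref{eq:divomega} shows that $c=0$ amounts to $m\mid n+d$. Since $d\mid m$, this leaves $m=d$, $d<m<n$, or $m\in\{n,n+d\}$, and $m=n$ gives $d=n=m$. The step I expect to be the main obstacle is (c) and (d) with $d<m<n$: separating $\bar Q$ from $S$ inside $L(K-(b_{\max}-1)S)$ when $\omega_1$ is not characterised by its eigenspace over $k_0$. I would first try the fixed-part computation there, and fall back on comparing the degrees $m$ and $\tfrac md\cdot d=m$ of the candidate components, using the condition $m<n$.
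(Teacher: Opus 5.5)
Your treatment of (b), of (d) when $S$ has a $k_0$-rational point, of (d$'$), and the exhaustiveness count agree with the paper. There is a genuine gap in (a), (c), and (d) with $d<m<n$. In each of these you try to \emph{recover} $\bar Q$ from the fixed part of a linear system, and that does not work.

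\textbf{Case (a).} The fixed part of $|K-b_{\max}S|$ is not $b_{\max}S+c\bar Q$. From $2g-2=b_{\max}m+cd$ one gets $A=\lfloor c/e\rfloor$. Since $\ord_{\bar Q}(\varphi(x)\omega_0)=c-e\deg\varphi$, the fixed part is $b_{\max}S+(c-eA)\bar Q$, and $c-eA$ is the residue of $c$ modulo $e$. Whenever $e\mid c$ this has no component on $\bar Q$. For $y^2=h(x)$ with $\deg h=2g+2$ this is just base-point-freeness of the hyperelliptic canonical system; for $(n,m)=(3,7)$ the fixed part is exactly $S$. Your subcase $c=0$ cannot occur in (a), since $A\ge1$ forces $c\ge e$. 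Your moving-part fallback presupposes $\omega_0$ and $x$, which is circular.

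More fundamentally, when $n\mid m$ the fibre $\bar Q$ is not intrinsic at all: the fibre of $x$ over any point of $\PP^1(k_0)$ serves equally well. So no construction can single it out, and a search over $k_0$-points is unavoidable. The paper does not look for $\bar Q$ in (a). It cuts $\{\varphi(x)\omega_0:\deg\varphi\le A\}$ down by imposing vanishing at closed points $T\notin S$ whenever the dimension drops by exactly one, which forces $x(T)\in k_0$. This terminates because $\PP^1(k_0)$ is infinite, and the ratio of a basis of the resulting two-dimensional space is a M\"obius transform of $x$.

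\textbf{Cases (c) and (d) with $d<m<n$.} Here the mechanism fails outright. With $c=0$, $\omega_0$ lies in $L(K-(b_{\max}-1)S)$ and does not vanish on $\bar Q$, so the fixed part is exactly $(b_{\max}-1)S$. The quotients $\omega/\omega_0$ are $\lambda+\psi(x)/y$ (not $\psi(x)+\lambda/y$). For $m<n$ they span $\langle 1,1/y\rangle$, and their zero divisors are the fibres of $y$, all of degree $m$. Your own comparison of $m$ with $\tfrac md\cdot d=m$ shows this, so degree cannot separate $\tfrac md\bar Q$ from $S$ or from the other fibres.

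Nor is $\tfrac md\bar Q$ characterised by its shape. The function $y$ ramifies above the roots of $h'$. On the Picard curve of \cref{ex:picard-alg}, the fibre of $y$ over $-1$ is $4\cdot(0,-1)$, of the same shape as $4Q_\infty$, and $(0,-1)$ is another $k_0$-rational Weierstrass point off $S$ of the same weight.

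So $\bar Q$ cannot be determined uniquely in general. The missing idea is the one the statement itself announces: finitely many candidate constructions, each tested in step 4.
\begin{itemize}
\item In (c), the paper observes that $Q_\infty$ is a $k_0$-rational Weierstrass point off $S$, since $dx/y^{n-1}$ has divisor $(2g-2)Q_\infty$. It then applies \cref{lem:derivative} with $D=nQ$ for every such $Q$.
\item In (d) with $d<m<n$, it computes the critical values of $y'$ from $\Div(dy')+2(y')_\infty$. It then applies \cref{lem:derivative} with $D=e\bar Z$ for every $k_0$-rational value whose fibre has the form $\tfrac md\bar Z$, with $\bar Z$ reduced of degree $d$.
\end{itemize}
In both cases step 4 rejects the wrong candidates.
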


\begin{proof}
(a) $V_{b_{\max}}=\{\varphi(x)\omega_0:\deg\varphi\le A\}$, and $\omega_0$ has no zeros outside $S\cup\bar Q$. Imposing vanishing at a closed point $T\notin S$ cuts this space to a subspace of the same shape
\[
\{\,q\,\psi(x)\,\omega_0:\ \deg\psi\le A'\,\}:
\]
for $T$ with $x(T)$ finite the condition is $q_T\mid\varphi$, with $q_T$ the minimal polynomial of $x(T)$ over $k_0$, and for $T$ over $x=\infty$ it is a bound on $\deg\varphi$. The algorithm enumerates the closed points $T\notin S$ by increasing degree and height of the places of $k_0(\pi_0)$ below them, an effective enumeration over a number field, and imposes vanishing at $T$ whenever the dimension drops by exactly one, until it is two; the drop is read off the Riemann--Roch space, and no knowledge of $x$ is used. At a finite point the drop equals $\deg q_T$ or zero, so a drop by one forces $\deg x(T)=1$; every fibre of $x$ over a value in $\PP^1(k_0)$ outside the finitely many values already forced contains such a point, and $\PP^1(k_0)$ is infinite, so the enumeration terminates. A space of dimension two of this shape is $\{q\,\psi\,\omega_0:\deg\psi\le 1\}$, and the ratio of a basis is $(\alpha+\beta x)/(\gamma+\delta x)$.

(b) $V_{b_{\max}}=\langle\omega_0\rangle$, and \cref{eq:divomega} gives
\[
\bar Q=\frac{\Div\omega_0-b_{\max}S}{c},\qquad F_\infty=e\bar Q,
\]
after which \cref{lem:derivative} with $D=F_\infty$ returns $\langle 1,x\rangle$. For an arbitrary candidate the right-hand side of the display need not be effective with multiplicities divisible by $c$ and of degree $d$; if it is not, the candidate is discarded.

(c) With $d=1$ the fibre over $\infty$ is one point $Q_\infty$ with $\ord(x)=-n$, so $F_\infty=nQ_\infty$. The point $Q_\infty$ is $k_0$-rational, and it is a Weierstrass point not in $S$: the differential $dx/y^{n-1}$ is holomorphic with divisor $((n-1)m-n-1)\,Q_\infty=(2g-2)\,Q_\infty$, and $2g-2>g-1$. For each $k_0$-rational Weierstrass point $Q\notin S$ form the subspace of \cref{lem:derivative} for $D=nQ$, $S'=S$, and retain it only if it has dimension two. For $Q=Q_\infty$ one has $nQ_\infty=F_\infty$, the lemma applies, and the subspace is $\langle 1,x\rangle$; for any other $Q$ the lemma makes no claim, and a retained subspace is tested and, if wrong, rejected in step 4.

(d) If $S$ contains a $k_0$-rational point $Q$, then \cref{lem:derivative} with $D=nQ$, $S'=S-Q$ applies over $k_0$, for every $m$. Suppose $d<m<n$. Here $b_{\max}\ge 1$, since $A=0$, and $c=0$ gives $2g-2=b_{\max}m$, so by \cref{prop:eigen} the space $H^0(K-(b_{\max}-1)S)=V_{b_{\max}-1}\oplus V_{b_{\max}}$ has dimension $\lfloor m/n\rfloor+2=2$ and equals $\langle\omega_0,\omega_1\rangle$; the ratio $y'$ of a $k_0$-basis is a M\"obius transform of $y$, so the fibres of $y'$ are the fibres of $y$. By \cref{eq:divomega} with $c=0$ the members of the pencil have divisors $(b_{\max}-1)S+Z$ with $Z$ a fibre of $y'$; the fibre through $\bar Q$ is $\tfrac md\bar Q$, non-reduced since $d<m$, over a value in $\PP^1(k_0)$ since $y'$ is $k_0$-rational and constant on the $k_0$-rational divisor $\bar Q$. The fibre $\tfrac md\bar Q$ need not be the only non-reduced one: $y'$ ramifies at the points above the roots of $h'$, so its fibres over the critical values in $k^\times$ are non-reduced as well. The ramification points of $y'$ are read off the ramification divisor $R_{y'}=\Div(dy')+2\,(y')_\infty$ over $k_0$; evaluating $y'$ at them gives the finitely many critical values, hence the non-reduced fibres. For each value $t\in\PP^1(k_0)$ whose fibre $Z$ satisfies $Z=\tfrac md\,\bar Z$ with $\bar Z$ reduced of degree $d$, form the subspace of \cref{lem:derivative} for $D=e\bar Z$, $S'=S$, and retain it only if it has dimension two. For $\bar Z=\bar Q$ one has $e\bar Z=F_\infty$, the lemma applies, and the subspace is $\langle 1,x\rangle$; for any other choice the lemma makes no claim, and a retained subspace is tested and, if wrong, rejected in step 4.

(d$'$) A base change to the residue field of a closed point of $S$, of degree at most $m$, makes a root $Q$ of $h$ rational, and \cref{lem:derivative} with $D=nQ$, $S'=S-Q$ applies. Here $c=0$ gives $m\mid n+d$, so $m=d$, which divides $n$, or $m=n+d$.
\end{proof}

\subsection{Completeness}\label{sec:complete-k0}

\begin{rem}\label{rem:cert-k0}
The test of \cref{prop:cert} is performed over $k_0$ for the covers of \cref{thm:complete-k0}. For a cover admitting a Kummer equation over $k_0$, the local data may be chosen constant on the Galois orbits of the branch values: by \cref{lem:local} the $l_j$ are the exponents modulo $n$ of $\tilde h\in k_0(u)$, which are Galois-stable; the divisor $D$ of \cref{eq:Ddiv} is then $k_0$-rational. The dimension $h^0(-D)$ does not change under extension of the constant field, so $D$ is principal over $\bar k_0$ if and only if it is principal over $k_0$, and $v$ may be taken in $k_0(C)$, with $h\in k_0[u]$ and, since $F$ is absolutely irreducible, $c\in k_0^\times$. The conclusion of \cref{prop:cert} is geometric: $\bar k_0(C)/\bar k_0(u)$ is cyclic, while $k_0(C)/k_0(u)$ itself is Galois only when $\zeta_n\in k_0$.
\end{rem}

\begin{theorem}\label{thm:complete-k0}
Let $C$ have genus $g\ge 2$ and let $n\in\Lev(C)$ be realised by a cyclic cover $\pi:C\to\PP^1$ with a separable model $y^n=h(x)$, $\deg h\ge 3$, defined over $k_0$. Then \cref{alg:main}, run over $k_0$ with the candidates \cref{eq:cand-k0} and \cref{thm:quotient} in step 3, outputs $n$, and the candidate arising from the branch divisor of $\pi$ yields a normal form of $\pi$. The constant field is extended at most once, to the residue field of a closed point of the branch divisor, hence by degree at most $\deg h$, and only in case (d$'$) of \cref{thm:quotient}, that is only when no root of $h$ is $k_0$-rational and either $\deg h$ divides $n$ or $\deg h=n+\gcd(n,\deg h)$ with $\gcd(n,\deg h)>1$.
\end{theorem}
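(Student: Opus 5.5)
The plan is to follow the proof of \cref{thm:complete}, replacing each step carried out over $k_n$ by its $k_0$-rational counterpart from \cref{sec:arith}.

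\emph{Step 1 (the candidate).} Since $\pi$ and the model are defined over $k_0$, the branch divisor $S$ of the finite roots of $h$ is a $k_0$-rational divisor. It is therefore a sum of distinct closed points. By \cref{cor:finite}, each geometric point of $S$ is a Weierstrass point with vanishing sequence $s_{n,m}$, so each of these closed points lies in $E$ with weight $w(s_{n,m})$. Their degrees sum to $m=\deg h$, and $(n,m)$ satisfies \cref{eq:genus}. Hence $S\in\cC_n(k_0)$. \Cref{prop:eigen} holds over $k_0$, so $S$ passes the dimension test \cref{eq:dimtest}. The weight filter of \cref{prop:wnorm} discards only places carrying no point of the required weight, so it does not remove $S$.

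\emph{Step 2 (the quotient function).} By the exhaustiveness statement in \cref{thm:quotient}, exactly one of the cases (a)--(d), (d$'$) occurs. In cases (a)--(d), that theorem produces over $k_0$ a M\"obius transform $u$ of $x$. It does so from Riemann--Roch spaces of $k_0$-rational divisors, and the wrong intermediate candidates are removed in step 4. In case (d$'$) we base change once to the residue field $k_0(Q)$ of a closed point $Q$ of $S$; its degree is at most $\deg Q\le m$. Over $k_0(Q)$, \cref{lem:derivative} with $D=nQ$ and $S'=S-Q$ gives $\langle 1,1/(x-x(Q))\rangle$.

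To obtain the description of (d$'$) stated in the theorem, I would unwind the conditions. The condition $A=0$, $c=0$ gives $m\mid n+d$, with $d=\gcd(n,m)>1$. Together with ``$m=d$ or $m\ge n$'', this leaves $m\mid n$ (since $m=d$) or $m=n+d$. The case $m=n$ is absorbed into $m=d$.

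\emph{Step 3 (certification).} I would apply \cref{rem:cert-k0}. Since $\pi$ is defined over $k_0$ and $u$ is a $k_0$-rational M\"obius transform of $x$ (or a $k_0(Q)$-rational one in case (d$'$)), the branch values of $u$ are the images of the roots of $h$ and possibly $\infty$. These can be read off via \cref{lem:branchvalues}. The local data $(1,\dots,1)$, together with $-m$ at infinity, is Galois-stable, so the divisor $D$ of \cref{eq:Ddiv} is rational over the field of computation. Its principality is geometric, so \cref{prop:cert} accepts it, and we may take $v$ over $k_0$. Condition \cref{eq:sepcond} holds with $c=1$, so \cref{thm:sep} and \cref{lem:reduce} give a separable $h$. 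The resulting pair $(u,v)$ is a normal form of $\pi$ itself, not just of some cover of level $n$, because $k(u)=k(x)$.

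\emph{Main obstacle.} I expect the main difficulty to be Step 2 in cases (a), (c) and (d). There the construction goes through auxiliary choices: the points $T$, the rational Weierstrass points $Q\notin S$, or the non-reduced fibres $\bar Z$. For completeness I must check two things. First, the enumeration reaches the genuine choice: $T$ of degree one in case (a), $Q_\infty$ in case (c), and $\bar Z=\bar Q$ in case (d). Second, that genuine choice survives step 4. Both are asserted in the proof of \cref{thm:quotient}, so here it remains only to note that a wrong choice can produce a false output only if it passes \cref{prop:cert} and the division check, in which case it is still a correct level-$n$ output. This establishes that $n$ is output.
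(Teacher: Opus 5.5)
Your proposal is correct and follows the paper's proof essentially step for step: $S\in\cC_n(k_0)$ via \cref{cor:finite}, the dimension test via \cref{prop:eigen}, the M\"obius transform of $x$ from \cref{thm:quotient}, acceptance in step 4 via \cref{prop:cert} and \cref{rem:cert-k0} with \cref{eq:sepcond} holding for $c=1$, and the same unwinding of case (d$'$) through $m\mid n+d$. The only addition in the paper is the converse check that $\deg h\mid n$, or $\deg h=n+d$ with $d>1$, forces $A=0$ and $c=0$. That check makes the ``that is'' an exact description of (d$'$); your direction already suffices for the ``only when'' claim.
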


\begin{proof}
By \cref{cor:finite} the finite branch points are Weierstrass points with sequence $s_{n,m}$, hence of weight $w(s_{n,m})$, and $S$ is a $k_0$-rational sum of closed points of $E$ of that weight and degree $m$; so $S\in\cC_n(k_0)$, and it passes the dimension test by \cref{prop:eigen}. \Cref{thm:quotient} returns a M\"obius transform $u$ of $x$ over $k_0$, or over the stated extension, after finitely many candidates discarded in steps 3 and 4; $u$ is cyclic, step 4 accepts it by \cref{prop:cert}, performed over $k_0$ by \cref{rem:cert-k0}, condition \cref{eq:sepcond} holds with $c=1$, and \cref{thm:sep} makes $h$ separable. Finally, case (d$'$) requires no $k_0$-rational root and $m=d$ or $m\ge n$; the first is $\deg h\mid n$, and the second, under $c=0$, is $\deg h\in\{n,n+d\}$, with $\deg h=n$ contained in $\deg h\mid n$. Conversely $\deg h\mid n$ forces $d=\deg h>1$, and with $d>1$ each of these forces $A=0$ and $c=0$ by \cref{eq:bmaxA} and \cref{eq:divomega}.
\end{proof}

\begin{prop}\label{prop:exc}
Let $C$ have genus $g\ge 2$ and let $n\in\{2g+1,2g+2\}$ be realised by a cyclic cover with a separable model $y^n=h(x)$, $\deg h=2$, defined over $k_0$. Then \cref{alg:main}, run over $k_0$, outputs $n$ in step 5 with a model $v^n=q(u)$, $q\in k_0[u]$ separable of degree two, and a transformation defined over $k_0$; the constant field is not extended.
\end{prop}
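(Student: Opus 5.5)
We follow the route of \cref{thm:complete} for the levels with $m=2$, but keep track of the field of definition at every stage; by \cref{prop:shape}(i), applied with $K=k_0$, everything reduces to showing $T_n\ne\emptyset$ over $k_0$.

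\emph{Step 1: level $2$ over $k_0$.} By \cref{lem:dichotomy} the curve is hyperelliptic. After completing the square over $k_0$ (characteristic zero), the given model reads $y^n=\alpha x^2+\beta$ with $\alpha,\beta\in k_0^\times$. Then $x^2=(y^n-\beta)/\alpha$, so $y$ is a degree-two function defined over $k_0$. The model $y_2^2=\alpha(\alpha y^n-\alpha\beta)$, with $y_2=\alpha x$ and $x_2=y$, is a hyperelliptic model over $k_0$ with $f=\alpha^2y^n-\alpha\beta$ separable of degree $n\ge 5$. The level $2$ cover therefore has $m'=n\ge5$ finite branch points and lies in case (a) or (b) of \cref{thm:quotient}. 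By \cref{thm:complete-k0}, step 4 over $k_0$ outputs level $2$ with some model $y_2^2=f(x_2)$, $f\in k_0[x_2]$, and a transformation over $k_0$, without extension. This needs $A\ge1$, which I expect to check directly: for $n=2$, $2g-2=b_{\max}m'+\dots$ gives $b_{\max}=0$ and $A=g-1\ge1$. So case (a) always applies and no extension occurs.

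\emph{Step 2: the two hyperelliptic models differ by $\mathrm{PGL}_2(k_0)$.} The model returned in Step 1 and the model built from $y$ are both hyperelliptic models over $k_0$. Since the $g^1_2$ is unique, their $x$-coordinates satisfy $k_0(x_2)=k_0(y)$ inside $k_0(C)$, so $y=M(x_2)$ with $M\in\mathrm{PGL}_2(k_0)$. Put $t=M^{-1}(\infty)\in\PP^1(k_0)$. As in the proof of \cref{prop:shape}(i), $M\gamma_t^{-1}$ is affine over $k_0$, so $f_t$ equals $a(x-\beta')^n+b$ up to a square in $k_0^\times$, with $a,b\in k_0^\times$ and $\beta'\in k_0$. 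Hence $t\in T_n$ over $k_0$.

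\emph{Step 3: step 5 finds $t$ over $k_0$.} Step 5 computes $T_n\cap\PP^1(k_0)$ exactly. For $n=2g+1$ it takes the rational roots of $f$ and $\infty$; for $n=2g+2$ it takes the common $k_0$-roots of the coefficient polynomials of \cref{prop:shape}(iii), which are polynomials with $k_0$ coefficients. Step 5 therefore finds $t$ and outputs $v^n=(u^2-b)/a$ with $q=(u^2-b)/a\in k_0[u]$ separable because $b\ne0$, and $(u,v)=(y,x-\beta)$ rational over $k_0$. Composing with the Step 1 transformation gives a transformation over $k_0$, and no extension is made at any stage.

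The main obstacle is Step 1. We must confirm that \cref{thm:complete-k0} applies to level $2$ with no base change, which means the branch divisor of $y$ must lie in the $k_0$ candidate set and the $A\ge1$ case of \cref{thm:quotient} must apply. This is what we verify first; if $A\ge1$ failed, we would instead argue via case (b) or (c).
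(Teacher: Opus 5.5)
Your proposal is correct and follows the paper's proof: complete the square to get a hyperelliptic model over $k_0$ with $n\ge5$ branch points, apply \cref{thm:complete-k0} to output level $2$ without extension, and use \cref{prop:shape} over $K=k_0$ to see that step 5 finds an element of $T_n$. The differences are minor. You exclude the extension by checking case (a) directly ($b_{\max}=0$, $A=g-1\ge1$), where the paper notes that the conditions of case (d$'$) fail, and your Step 2 reproves the converse of \cref{prop:shape}(i) instead of citing it; note also the small slip that $y_2=\alpha x$ gives $y_2^2=\alpha y^n-\alpha\beta$, which does not affect separability or degree.
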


\begin{proof}
Write $h=Ax^2+Bx+C$ and put $u=2Ax+B$, $v=y$. Then
\[
v^n=\frac{u^2-b}{a},\qquad a=4A\in k_0^\times,\quad b=B^2-4AC\in k_0^\times,
\]
with $b\ne0$ since $h$ is separable, and $k_0(u,v)=k_0(C)$. The polynomial $av^n+b$ is separable of degree $n\ge5$, so this is a hyperelliptic model of $C$ over $k_0$ and the level $2$ is realised over $k_0$ by a cover with separable branch polynomial of degree $n\ge3$. By \cref{thm:complete-k0} the run over $k_0$ outputs the level $2$, with a model $y_2^2=f(x_2)$ over $k_0$; since $n\ge5$, neither $n\mid2$ nor $n=2+\gcd(2,n)$, so no extension occurs there. The displayed model is that of \cref{prop:shape}(i) over $K=k_0$, so $T_n\ne\emptyset$. By \cref{prop:shape}(ii) and (iii), step 5 computes $T_n$ over $k_0$ and outputs $n$ with $q=u^2-b$ separable of degree two, and with a transformation defined over $k_0$.
\end{proof}

\begin{cor}\label{cor:complete-k0}
Let $C$ have genus $g\ge 2$ and let $n\in\Lev(C)$ be realised by a cyclic cover $\pi$ with a separable model $y^n=h(x)$ defined over $k_0$. Then \cref{alg:main}, run over $k_0$, outputs $n$; for $\deg h\ge 3$ the candidate arising from the branch divisor of $\pi$ yields a normal form of $\pi$, and for $\deg h=2$ step 5 yields one. The constant field is extended at most once, by degree at most $\deg h$, and only when $3\le\deg h$, no root of $h$ is $k_0$-rational, and either $\deg h$ divides $n$ or $\deg h=n+\gcd(n,\deg h)$ with $\gcd(n,\deg h)>1$.
\end{cor}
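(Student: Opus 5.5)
The corollary combines \cref{thm:complete-k0} and \cref{prop:exc}, so the plan is to split on $m=\deg h$ and check that the extension clauses fit together. Since $h$ is separable and $g\ge 2$, \cref{lem:dichotomy} leaves only $m=2$ and $m\ge 3$.

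\emph{Case $m\ge 3$.} The hypotheses are exactly those of \cref{thm:complete-k0}. That theorem gives three things. First, \cref{alg:main} run over $k_0$ outputs $n$. Second, the candidate coming from the branch divisor of $\pi$ yields a normal form of $\pi$. Third, the constant field is extended at most once, to the residue field of a closed point of $S$, which has degree at most $\deg h$. This extension happens only in case (d$'$) of \cref{thm:quotient}. That case was translated there into the condition that no root of $h$ is $k_0$-rational and either $\deg h\mid n$ or $\deg h=n+\gcd(n,\deg h)$ with $\gcd(n,\deg h)>1$. The condition $3\le\deg h$ holds automatically in this case.

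\emph{Case $m=2$.} Here \cref{lem:dichotomy} gives $n\in\{2g+1,2g+2\}$. Then \cref{prop:exc} applies directly: step 5 outputs $n$ with a model and transformation defined over $k_0$, and the constant field is not extended. Inside that proof, the level-$2$ run calls \cref{thm:complete-k0} with branch polynomial of degree $n\ge 5$. We need that this run causes no extension either. I will check this by noting that neither $n\mid 2$ nor $n=2+\gcd(2,n)$ holds, which \cref{prop:exc} already records. So no extension occurs when $\deg h=2$, consistent with the clause "only when $3\le\deg h$".

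The only step that needs care is matching the wording of the extension condition with the one in \cref{thm:complete-k0}, in particular that $\deg h\mid n$ (which includes $\deg h=n$) and $\deg h=n+d$ with $d>1$ together cover case (d$'$). Since that theorem states the condition in exactly this form, the corollary follows by taking the union of the two cases.
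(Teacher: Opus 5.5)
Your proposal is correct and is exactly the paper's proof: the corollary is \cref{thm:complete-k0} for $\deg h\ge 3$ combined with \cref{prop:exc} for $\deg h=2$, with the extension clause carried over verbatim from the former and ruled out by the latter. One small correction: $\deg h\ge 2$ follows from \cref{eq:genus}, since a separable $h$ of degree one gives $g=0$, and not from \cref{lem:dichotomy}, which assumes $\deg h\ge 2$ as a hypothesis.
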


\begin{proof}
\Cref{thm:complete-k0} for $\deg h\ge 3$ and \cref{prop:exc} for $\deg h=2$.
\end{proof}

Conversely, a level found over $k_0$ by \cref{thm:quotient} and \cref{prop:cert} is a level of $C_{\bar k_0}$; so the run over $k_0$ returns a subset of the geometric answer of \cref{thm:complete}, equal to it exactly when every level with separable model is realised by a cover and model defined over $k_0$. For $n$ odd, \cref{rem:descent} removes the subgroup obstruction and splits the quotient conic after at most one quadratic extension; any remaining obstruction to a Kummer model over the field must be treated separately.

\begin{exa}\label{ex:picard-alg}
Let $F(X,Y)=Y^3-(X+Y)^4+1$, the Picard curve $y^3=x^4-1$ of \cref{ex:picard} in the coordinates $(x,y)=(X+Y,Y)$. Its Weierstrass divisor $E$ over $\Q$ contains the closed points
\[
\begin{array}{lcc}
\text{closed point} & \text{degree} & \text{multiplicity}\\
\hline
(X,Y)=(\pm 1,0);\quad X^2+1=0,\ Y=0 & 1,1,2 & 1\\
(X,Y)=(-\eta,\eta),\ \eta^3=-1 & 1,2 & 2\\
Q_\infty & 1 & 2
\end{array}
\]
and further points of multiplicity one, of degrees two and four. For $(n,m)=(3,4)$ the only $S\in\cC_3$ of weight $w(s_{3,4})=1$ and degree four among these is the first row. Here $b_{\max}=1$, $A=0$, $c=0$, $d=1$, so case (c) of \cref{thm:quotient} applies: $Q_\infty$ is a rational Weierstrass point off $S$, and \cref{lem:derivative} with $D=3Q_\infty$ returns $\langle1,x\rangle$, giving
\[
v^3=u^4-1,\qquad (u,v)=(X+Y,\ Y).
\]
For $(n,m)=(4,3)$ the surviving candidate is the second row; the sum of $Q_\infty$ and the degree two point of the second row also has weight two and degree three, so it enters $\cC_4(k_0)$ as well, and yields no output. Here $b_{\max}=1$, $A=0$, $c=1$, $d=1$, so case (b) applies: $\Div\omega_0=S+\bar Q$ exhibits $\bar Q=Q_\infty$, and \cref{lem:derivative} with $D=F_\infty=4Q_\infty$ gives
\[
v^4=u^3+1,\qquad (u,v)=(Y,\ X+Y).
\]
By \cref{lem:dichotomy} every level of $C$ with a separable model satisfies $n\le 5$, and $n=5$ does not occur by \cref{ex:filter}; so $\{3,4\}$ is the set of levels of $C$ admitting a separable model.
\end{exa}

\begin{rem}\label{rem:cost}
The cost of the algorithm lies in the genus and integral basis over $k_0$, in the determinant of the Wronskian, in the factorisation of its norm, and in Riemann--Roch spaces of divisors of degree at most $2g-2+2n$. By \cref{prop:wnorm} the Wronskian enters through its norm and through valuations read off that norm; a valuation of $W$ itself is computed only when two or more closed points of degree greater than one lie above a single place of $k_0(\pi_0)$, as in \cref{sec:prelim}, and no expansion or residue field of $W$ is formed. This matters in practice: the cost of such operations is governed by the height of $W$, which grows with the height of the input model, while the remaining operations are Riemann--Roch spaces of divisors of degree $O(g)$, resultants, and factorisations over $k_0$. No enumeration over geometric points occurs, and $|\cC_n(k_0)|$ is bounded by the number of subsets of the closed points of $E$ of weight $w(s_{n,m})$, which is small since ordinary Weierstrass points have weight one while the only cases with $w(s_{n,m})=1$ are $(n,m)=(3,4)$ in genus three and $(n,m)\in\{(2,5),(2,6)\}$ in genus two.
\end{rem}

\subsection{Implementation and timings}\label{sec:impl}

An implementation of \cref{alg:main} in Sage accompanies the paper, in the file \texttt{superelliptic.py}, with a README documenting its routines and the bounds a run places on the enumerations of \cref{thm:quotient}. It is the version of this section: it runs over $k_0$ with the candidates \cref{eq:cand-k0} and the cases of \cref{thm:quotient}, and performs the base change of case (d$'$) where the theorem requires it. The function field of the plane model, its genus, a basis of holomorphic differentials, places, divisors and Riemann--Roch spaces are computed by Sage's implementation of the algorithms of Hess \cite{hess2002}. All computations are exact, and every output is verified by \cref{prop:cert} and by division into the input equation.

\Cref{tab:timings} reports \texttt{superelliptic\_levels} on ten plane models over $\Q$, on one core, with Sage 10.4: the genus, the degrees of the closed points of $E$ retained, the levels output, the case of \cref{thm:quotient} used for each of them, the time spent on the determinant of the Wronskian, and the total wall time, both in seconds. The models are linear disguises of normal forms; the one model in the table that is already a normal form, in row two, is recognised in $0.6$ seconds.

\begin{table}[ht]
\centering
\footnotesize
\begin{tabular}{llclcrr}
\hline
$F(X,Y)$ & $g$ & $E$ (degrees) & levels & case & $\det W$ & total \\
\hline
$Y^3-(X+Y)^4+1$ & 3 & $1^4\,2^6\,4$ & 3, 4 & (c), (b) & $0.0$ & $7.3$ \\
$Y^3-X^4-X-1$ & 3 & $1\,4\,18$ & 3 & (c) & $0.0$ & $0.6$ \\
$(X+Y)^4+Y^4-1$ & 3 & $1^4\,2^2$ & 4 & (b) & $0.0$ & $3.1$ \\
$(Y-X)^5-(X+2Y)^3+(X+2Y)$ & 4 & $1^4$ & 5 & (c) & $15.8$ & $28.1$ \\
$(Y+X)^6-X^3+X$ & 4 & $1^4\,12\,36$ & 6 & (d) & $0.7$ & $30.2$ \\
$Y^4-X^2(X^3-1)$ & 4 & $1^2\,2\,12\,36$ & 6 & (d) & $0.2$ & $1.8$ \\
$(Y+X)^2-X^5+1$ & 2 & $1^2\,4$ & 2, 5 & (a), step 5 & $0.0$ & $0.4$ \\
$(Y+X)^2-X^{11}+1$ & 5 & $1^2\,10$ & 2, 11 & (a), step 5 & $0.2$ & $0.8$ \\
$(Y+X)^4-X^5+1$ & 6 & $1^2\,4^2$ & 4, 5 & (c), (b) & $130.8$ & $133.9$ \\
$(Y-2X)^5-X^4+1$ & 6 & $1^4\,2\,4\,20$ & 4, 5 & (c), (b) & $352.6$ & $361.5$ \\
\hline
\end{tabular}
\caption{Runs of \cref{alg:main} over $\Q$.}
\label{tab:timings}
\end{table}

The table bears out \cref{rem:cost}. The determinant of the Wronskian dominates on the two models of genus six, where it takes $130.8$ of $133.9$ seconds and $352.6$ of $361.5$. It is not the leading cost elsewhere: on row five it takes $0.7$ seconds against $12.7$ for the factorisation of the norm together with the weights read off it, and $15.8$ for the level $6$ attempt. \Cref{prop:wnorm} removes the expansions and residue fields of $W$ from the computation, and its valuations except in the case of \cref{sec:prelim}, but not the determinant itself, and it is the determinant that is left.

Row one is \cref{ex:picard-alg} and row six the curve of \cref{ex:nonsep}; rows seven and eight are hyperelliptic, with the levels $5$ and $11$ from step 5. The four cases (a) to (d) of \cref{thm:quotient} all occur in the table; case (d$'$) does not, and no example in this paper reaches it. It is the only case in which the constant field is extended, and by \cref{thm:complete-k0} the extension has degree at most $\deg h$.

Two questions are left open. Levels realised only by cyclic covers that admit no separable model are outside \cref{thm:complete}, and \cref{rem:incomplete} gives branch data, $n=4$ with $(l_j)=(1,1,3,3)$ in genus three, for which the reduction to Weierstrass points fails; deciding those levels needs a different search. Over $k_0$, the run returns the geometric answer of \cref{thm:complete} exactly when every level with a separable model is realised by a cover and a model defined over $k_0$, and \cref{rem:descent} leaves the remaining obstruction to a Kummer model over the field untreated. No complexity bound is claimed: the cost is governed by the height of the Wronskian, which is not bounded in terms of $g$.

\bibliographystyle{amsplain}
\bibliography{sh-92}


\end{document}